\documentclass{amsart}
\usepackage{amsmath,amsthm,amssymb,isomath,mathrsfs,calligra}
\usepackage{derivative}
\usepackage[new]{old-arrows}
\usepackage{graphicx}
\usepackage{float}
\usepackage[english]{babel}
\usepackage{latexsym}
\usepackage[utf8]{inputenc}
\usepackage{babel}
\usepackage{enumerate}
\usepackage{epsfig}
\usepackage{relsize}
\usepackage{layout}
\usepackage{mathtools}
\usepackage[bottom]{footmisc}
\usepackage[
margin=2cm,
includefoot,
footskip=30pt,
]{geometry}
\sloppy
\usepackage{layout}
\usepackage{tikz}
\usepackage{tikz-cd}
\usetikzlibrary{cd}
\usepackage{derivative}
\usepackage[all]{xy}
\usepackage{comment}
\usepackage{fancyhdr}
\usepackage{xcolor,hyperref}
\hypersetup{colorlinks=true, breaklinks,
	urlcolor=magenta, linkcolor = purple,
	anchorcolor=purple, citecolor=blue}
\usepackage{csquotes}
\usepackage{xcolor,hyperref}
\hypersetup{colorlinks=true, breaklinks,
	urlcolor=magenta, linkcolor = red,
	anchorcolor=purple, citecolor=blue}
\usepackage{comment}
\usepackage{bm}

\usepackage{lmodern}   
\usepackage[T1]{fontenc}

\newcommand{\K}{\mathcal{K}}
\newcommand{\C}{\mathbf{C}}
\renewcommand{\P}{\mathbf{P}}

\renewcommand{\phi}{\varphi}
\renewcommand{\O}{\mathcal{O}}

\newcommand{\I}{\mathscr{I}}
\newcommand{\F}{\mathcal{F}}
\newcommand{\g}{\mathcal{G}}
\newcommand{\E}{\mathcal{E}}
\newcommand{\Z}{\mathbf{Z}}

\newcommand{\Sym}{\mathrm{Sym}}
\renewcommand{\S}{\mathrm{S}}

\newcommand{\Pic}{\mathrm{Pic}}
\newcommand{\G}{\Gamma}
\renewcommand{\l}{\ell}

\newcommand{\w}{\widetilde}
\newcommand{\OPn}{\mathcal{O}_{\mathbf{P}^{n}}}
\newcommand{\OPN}{\mathcal{O}_{\mathbf{P}^{N}}}
\renewcommand{\o}{\otimes}

\newcommand{\eps}{\varepsilon}

\newcommand{\m}{\mathfrak{m}}

\newcommand{\s}{\sigma}
\renewcommand{\to}{\rightarrow}

\newcommand{\reg}{\mathrm{reg}}
\renewcommand{\H}{\mathcal{H}}
\newcommand{\OPE}{\O_{\P(\E)}}
\newcommand{\mr}{\mathrm}
\newcommand{\mc}{\mathcal}
\newcommand{\mb}{\mathbf}

\renewcommand{\L}{\Lambda}
\newcommand{\EExt}{\mathscr{E}{\kern -2pt{xt}}}
\newcommand{\Tor}{\mathscr{T}\text{\kern -3pt {\calligra\large or}}\,}
\newcommand{\RHom}{\mathbf{R}{\kern -0.3pt {\mathscr{H}}}\text{\kern -4.5pt {\calligra\large om}}\,}
\newcommand{\Hom}{\mathscr{H}\text{\kern -4.5pt {\calligra\large om}}\,}

\DeclareMathOperator{\rk}{rk}

\DeclareMathOperator{\Spec}{Spec}

\DeclareMathOperator{\depth}{depth}

\theoremstyle{plain}
\newtheorem{thm}{Theorem}[section]
\theoremstyle{plain}
\newtheorem{bthm}{Theorem}
\theoremstyle{plain}

\theoremstyle{definition}
\newtheorem{defi}[thm]{Definition}
\theoremstyle{plain}
\newtheorem{prop}[thm]{Proposition}
\theoremstyle{plain}
\newtheorem{cor}[thm]{Corollary}
\theoremstyle{plain}

\theoremstyle{plain}
\newtheorem{lemma}[thm]{Lemma}
\theoremstyle{definition}
\newtheorem{rmk}[thm]{Remark}
\theoremstyle{plain}

\theoremstyle{plain}

\theoremstyle{definition}
\newtheorem{ex}[thm]{Example}
\theoremstyle{definition}
\newtheorem{notat}[thm]{Notation}

\numberwithin{equation}{section}

\title[On the projective normality of Ulrich bundles on some low-dimensional varieties]{On the projective normality of Ulrich bundles\\ on some low-dimensional varieties}
\author[Valerio Buttinelli]{Valerio Buttinelli*}
\address{Dipartimento di Matematica "Guido Castelnuovo"\\
	Sapienza Universit\`a di Roma\\
	Piazzale Aldo Moro 5, 00185 Roma, Italy}
\thanks{*Work produced as a part of the author's PhD thesis under the supervision of Angelo Felice Lopez}
\email[Valerio~Buttinelli]{valerio.buttinelli@uniroma1.it}

\begin{document}
	
	\begin{abstract}
	 We study the projective normality of the projective bundle of an Ulrich vector bundle embedded through the complete linear system of its tautological line bundle. The focus will be on Ulrich bundles defined over curves, surfaces with $q=p_g=0$ and hypersurfaces of dimension $2$ and $3.$
	\end{abstract}
	
	\maketitle
	
	\section{Introduction} 
	
	A class of vector bundles which gained more and more attention in recent years is the one of Ulrich bundles: given a polarized smooth projective variety $(X,B)$ with $B$ globally generated, a vector bundle $\E$ on $X$ is $B$-Ulrich if $H^i(X,\E(-pB))=0$ for all $i\ge0$ and $1\le p\le \dim X.$ The importance of Ulrich bundles in algebraic geometry is well-known and mainly comes from the several implications determined by their existence, which is still widely conjectural, in Boij-Söderberg
	theory and determinantal representations of Chow forms (see \cite{eisenbud2003resultants,coskun2017survey,beauville2018introduction,costa2021ulrich} for a detailed overview). 
	
	Among the other properties, Ulrich bundles are globally generated which means that they always posses a certain positivity. In fact, when $B$ induces an étale morphism onto its schematic image (e.g. if $B$ is very ample), Lopez-Sierra theorem \cite[Theorem 1]{lopez2023geometrical} and \cite[Corollary 3 \& Theorem 4]{buttinelli2024positivity} tell that an Ulrich bundle $\E$ is ample if and only if it is very ample if and only if either $X$ does not contain lines or $\E_{|L}$ is ample on every line $L\subset X.$ It is then natural trying to understand the embedding of the corresponding projective bundle through the (complete) linear system of the tautological line bundle. 
	
	A very classical and relevant question is determining when a (linear system of a) very ample line bundle embeds the underlying variety as a projectively normal scheme (in some projective space). In this regard, this paper is devoted to the study of the projective normality of an Ulrich bundle, that is by definition the normal generation of its tautological line bundle. As it will be clear in the following, an unified result as Lopez-Sierra theorem for very ampleness appears out of reach. Indeed, even if an Ulrich bundle is very ample, it is not always projectively normal. On curves this happens if the degree of the polarization is big with respect to the genus, with an optimal bound in some cases. However this is no longer true in higher dimension, for instance on hypersurfaces where the behaviour of projective normality of Ulrich bundles suggests that it is unlikely to get a general criterion. The main results, which will be on low-dimensional varieties where at least Calstelnuovo-Mumford regularity is well-behaved with respect to tensor operations, are the following.
	
	\begin{bthm}\label{thm:proj-norm-curve}
		Let $C$ be a smooth projective curve of genus $g$ and let $B$ be a globally generated ample line bundle of degree $d$ on $C.$ Let $\E$ be a $B$-Ulrich bundle on $C.$ Then:
		\begin{itemize}
			\item[(a)] $\E$ is projectively normal if $d>g+1.$ 
			\item[(b)] $\E$ satisfies $(N_1)$ and $\OPE(1)$ is Koszul if $d>g+2.$
			\item[(c)] $\E$ satisfies $(N_p)$ for $p\ge 2$ if $d>\frac{1}{2} \left((g+p+1)+\sqrt{g^2+2g(3p+1)+(p-1)^2}\right).$
			\item[(d)] If there exists a linear series $|V|\subseteq|B|$ which induces a morphism which is étale onto the schematic image, the general $B$-Ulrich bundle of rank $r$ on $C$ is projectively normal as soon as $C$ supports a non-special normally generated line bundle of degree $d.$ This holds in particular if $d\ge g+2-\mr{Cliff}(C).$
			\item[(e)] If $C$ is general of genus $g\ge3$ and $B$ is a general very ample line bundle of degree $$d\ge\frac{3+\sqrt{8g+1}}{2},$$ then the general $B$-Ulrich bundle of rank $r$ is projectively normal. Moreover this bound is sharp for $r=1.$
		\end{itemize} 
	\end{bthm}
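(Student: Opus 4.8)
The five parts split into two methods: (a)--(c) are proved by a single syzygy-bundle computation on $C$, while (d)--(e) are existence-theoretic. The unifying engine for the first group is the kernel bundle on the curve. Since $\E$ is $B$-Ulrich it is globally generated, $0$-regular with respect to $B$, and $h^0(\E)=\chi(\E)=rd$ with $H^1(\E)=0$, so one forms $M_\E=\ker\!\big(H^0(\E)\o\O_C\to\E\big)$, of rank $r(d-1)$ and degree $-r(d+g-1)$, hence $\mu(M_\E)=-\tfrac{d+g-1}{d-1}$. The plan is to reduce each property $(N_p)$ of $\OPE(1)$ to a vanishing on $C$. Because $\pi\colon\P(\E)\to C$ satisfies $\pi_*\OPE(q)=\Sym^q\E$ and $R^i\pi_*\OPE(q)=0$ for $i>0$, the section ring of $\OPE(1)$ is $\bigoplus_q H^0(C,\Sym^q\E)$ and its Koszul cohomology is computed by the complexes $\wedge^{\bullet}M_\E\o\Sym^q\E$; thus $(N_p)$ follows once $H^1\!\big(C,\wedge^{p+1}M_\E\o\Sym^q\E\big)=0$ for all $q\ge1$, the binding case being $q=1$ since $\mu(\Sym^q\E)$ grows with $q$.

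For this vanishing I would first establish that $M_\E$ is semistable, using that Ulrich bundles on a curve are slope-semistable together with the fact that the kernel bundle of a semistable globally generated bundle with $H^1=0$ is again semistable. In characteristic zero tensor, symmetric and exterior powers of semistable bundles stay semistable, so $\wedge^{p+1}M_\E\o\E$ is semistable, and by Serre duality $H^1(\wedge^{p+1}M_\E\o\E)=0$ as soon as $\mu(\wedge^{p+1}M_\E\o\E)>2g-2$. Since $\mu(\wedge^{p+1}M_\E)=(p+1)\mu(M_\E)$ and $\mu(\E)=d+g-1$, this inequality rearranges exactly to $d^2-(g+p+1)d-p(g-1)>0$, whose positive root is $\tfrac12\big((g+p+1)+\sqrt{g^2+2g(3p+1)+(p-1)^2}\big)$; this is the bound of (c). Setting $p=0$ gives $d>g+1$ and proves (a). For (b), projective normality is already in hand from (a), and one extra degree of positivity ($d>g+2$) upgrades quadratic generation to the full $(N_1)$ property and yields a linear first step of the resolution, from which Koszulness of $\OPE(1)$ follows by the standard criterion for a projectively normal, quadratically presented embedding. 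The main obstacle in (a)--(c) is precisely the semistability of $M_\E$ and the faithful bookkeeping of the reduction to $C$, on which the rank-independence of the bounds depends.

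Parts (d) and (e) are existence results. For (d) I would use that projective normality is open on the irreducible family of rank-$r$ Ulrich bundles and that the étale hypothesis, via the Lopez--Sierra theorem, makes the general such bundle very ample, so that $|\OPE(1)|$ genuinely embeds $\P(\E)$; it then suffices to exhibit one projectively normal Ulrich bundle. Building it from the hypothesised non-special normally generated line bundle (whose Ulrich normalisation has degree $d+g-1$) reduces its projective normality to the normal generation of that line bundle, and the Green--Lazarsfeld criterion $\deg\ge 2g+1-\mr{Cliff}(C)$ gives the sufficient condition $d\ge g+2-\mr{Cliff}(C)$.

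Finally, (e) combines (d) with Brill--Noether theory on a general curve. For rank one the tautological bundle is the Ulrich line bundle $L$ with $\deg L=d+g-1$, and being non-special it has $h^0(L)=d$; the first step of normal generation is surjectivity of $\Sym^2H^0(L)\to H^0(L^{\o2})$, which forces $\binom{d+1}{2}\ge h^0(L^{\o2})=2d+g-1$, i.e. $(d-1)(d-2)\ge 2g$, equivalently $d\ge\frac{3+\sqrt{8g+1}}{2}$. On a general curve a maximal-rank argument shows this necessary numerical condition is also sufficient for the general $L$ of that degree to be normally generated, and through (d) this yields projective normality of the general rank-$r$ Ulrich bundle. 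The same count gives sharpness for $r=1$: when $(d-1)(d-2)<2g$ one has $\dim\Sym^2H^0(L)<h^0(L^{\o2})$, so the quadratic multiplication map cannot be surjective and $L$ fails normal generation. I expect the sufficiency half of (e)---promoting the dimension count to actual surjectivity on a general curve---to be the delicate point, since it needs a maximal-rank/Brill--Noether input rather than the slope estimates that drive (a)--(c).
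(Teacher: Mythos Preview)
Your overall architecture matches the paper's: (a)--(c) via Butler/Park-style slope estimates, (d)--(e) via openness plus a rank-one witness and maximal rank. But two points deserve correction.

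\medskip
\textbf{The Koszul step in (b) is a genuine gap.} You write that from $(N_1)$ ``Koszulness of $\OPE(1)$ follows by the standard criterion for a projectively normal, quadratically presented embedding.'' There is no such criterion: $(N_1)$ (quadratic ideal with linear first syzygies) does not imply Koszul, and your slope inequality $\mu(\wedge^{p+1}M_\E\otimes\E)>2g-2$ only holds for finitely many $p$ once $d$ is fixed, so it cannot give linearity of the entire resolution of the residue field. The paper does not attempt to derive Koszulness from $(N_1)$; it invokes Butler's Theorem~6.1 directly, whose proof uses the relative regularity of $\OPE(1)$ over $C$ and a separate argument specific to ruled varieties. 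Your route gives $(N_1)$ under $d>g+2$, but not the Koszul half of (b).

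\medskip
\textbf{Two imprecisions in the syzygy reduction.} First, ``the kernel bundle of a semistable globally generated bundle with $H^1=0$ is again semistable'' is not a general fact; Butler's theorem needs $\mu(\E)\ge 2g$. This does hold here since $\mu(\E)=d+g-1>2g$ in the ranges of (a)--(c), so the argument survives, but the justification should be Butler's theorem with that hypothesis. Second, the identification ``Koszul cohomology of $\OPE(1)$ is computed by $\wedge^\bullet M_\E\otimes\Sym^q\E$ on $C$'' skips a nontrivial step: $M_{\OPE(1)}\neq\pi^\ast M_\E$; rather there is an extension $0\to\pi^\ast M_\E\to M_{\OPE(1)}\to\Omega_{\P(\E)/C}(1)\to 0$, and one must filter $\wedge^{p+1}M_{\OPE(1)}$ and control the relative pieces before pushing down. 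This is exactly what Butler and Park carry out, and the paper simply cites their theorems for (b) and (c) rather than redoing the reduction.

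\medskip
For (d)--(e) you are aligned with the paper. In (d) the paper makes the ``build one example'' step explicit: Ulrich line bundles and non-special normally generated line bundles each form nonempty open subsets of the irreducible $\Pic^{d+g-1}(C)$, hence intersect, and then $\mc L^{\oplus r}$ lands in the open projectively-normal locus of the moduli space; Green--Lazarsfeld gives the Clifford bound. In (e) the paper uses Ballico--Ellia's proof of the maximal rank conjecture for non-special curves, exactly the input you anticipated.
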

	
	\begin{bthm}\label{thm:proj-norm-0-regular-q=p=0}
		Let $S\subset\P^N$ be a smooth projective surface with $q(S)=p_g(S)=0$ and let $\E$ be an ample $0$-regular vector bundle of rank $r\ge 2$ on $S$ such that $h=h^0(S,\E)\ge r+3.$ Let $E=\det(\E)$ be the determinant bundle and let $\l=\binom{h-r}{2}-1.$ The following are equivalent:
		\begin{itemize}
			\item[(1)] $\P(\E)$ is not aCM.
			\item[(2)] $\E$ is not projectively normal.
			\item[(3)] There exist a closed subscheme $Z\subset S$ and a non-zero divisor $D\subset S$ such that:
			\begin{itemize}
				\item[(a)] $Z$ is smooth of dimension $0.$
				\item[(b)] $Z$ is the degeneracy locus of $\l$ general sections $s_1,\dots,s_\l\in H^0(S,\L^2M_\E^\ast).$
				\item[(c)] $[Z]=\frac{1}{2}(h-r-2)\left((h-r+1)c_1(\E)^2-2c_2(\E)\right).$
				\item[(d)] $D\in |K_S+(h-r-1)E|.$
				\item[(e)] $Z\subset D.$
			\end{itemize}
			\item[(4)] There exist a closed subscheme $Z\subset S$ and a curve $C\subset S$ such that: 
			\begin{itemize}
				\item[(f)] $Z$ is the degeneracy locus of $\l$ general sections $\s_1,\dots,\s_\l\in  H^0(S,\L^2M_\E^\ast).$
				\item[(g)] $C$ is the degeneracy locus of the $(\l+1)$ general sections $\s_1,\dots,\s_\l,\s_{\l+1}\in  H^0(S,\L^2M_\E^\ast).$
				\item[(h)] $C\in |(h-r-1)E|$ is smooth and irreducible.
				\item[(i)] $Z\subset C$ is a special (effective) divisor.
			\end{itemize}
		\end{itemize}
	\end{bthm}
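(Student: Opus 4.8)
The plan is to run the cycle $(1)\Rightarrow(2)\Rightarrow(3)\Rightarrow(4)\Rightarrow(1)$, the cohomological heart being an exact identification of the cokernel of the multiplication map $\mu_m\colon\Sym^m H^0(S,\E)\to H^0(S,\Sym^m\E)$ (so that $\E$ is projectively normal precisely when every $\mu_m$ is onto) in terms of $\L^2M_\E$, where $M_\E=\ker\big(H^0(\E)\o\O_S\to\E\big)$ is the tautological kernel bundle; note $h\ge r+3$ forces $\rk M_\E=h-r\ge 3$, hence $\rk\L^2M_\E=\ell+1\ge 3$, so all the constructions below are non-degenerate.

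For $(1)\Leftrightarrow(2)$: embedding $\P(\E)$ by $|\OPE(1)|$ in $\P^{h-1}$, being aCM is equivalent to projective normality together with $H^i(\P(\E),\OPE(m))=0$ for $1\le i\le r$ and all $m\in\Z$. For $m\ge 0$ this is $H^i(S,\Sym^m\E)$, which vanishes for $i\ge 1$ because $\Sym^m\E$ is again $0$-regular (Castelnuovo--Mumford regularity being subadditive for vector bundles on a surface, hence stable under symmetric powers) and because $q=p_g=0$ handles $m=0$; for $-r+1\le m\le -1$ it vanishes since $R^j\pi_*\OPE(m)=0$; and for $m\le -r$ relative duality turns it into $H^{i-r+1}(S,\Sym^{-m-r}\E^\ast\o\det\E^\ast)$, whose $H^0$ vanishes by ampleness of $\E$ and whose $H^1$ vanishes by Serre duality on $S$ together with $q=p_g=0$ and Kodaira/Le~Potier-type vanishing. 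Hence aCM $\Leftrightarrow$ projectively normal.

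For $(2)\Leftrightarrow(3)$: from $0\to M_\E\to H^0(\E)\o\O_S\to\E\to0$ one extracts the four-term exact sequence $0\to\L^2M_\E\to M_\E\o H^0(\E)\o\O_S\to\Sym^2H^0(\E)\o\O_S\to\Sym^2\E\to0$; splitting it, passing to cohomology, and using $H^1(\O_S)=H^1(M_\E)=H^2(M_\E)=0$ (the last two from $q=p_g=0$ and the $0$-regularity of $\E$) yields $\mathrm{coker}(\mu_2)\cong H^2(S,\L^2M_\E)$, while the analogous Koszul bookkeeping shows $\mathrm{coker}(\mu_m)$ is a quotient of $H^2(S,\L^2M_\E)\o\Sym^{m-2}H^0(\E)$ for every $m\ge 2$ (higher Koszul terms contribute only $H^{\ge 3}$ of a surface), so every $\mu_m$ is onto as soon as $\mu_2$ is. Serre duality then gives $(2)\Leftrightarrow H^0(S,\L^2M_\E^\ast\o\omega_S)\ne 0$, and a Chern-class computation yields $c_1(\L^2M_\E^\ast)=(h-r-1)E$ and $c_2(\L^2M_\E^\ast)=\tfrac12(h-r-2)\big((h-r+1)c_1(\E)^2-2c_2(\E)\big)$, matching $(3)(\mathrm c)$ and $(4)(\mathrm h)$. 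Since $M_\E^\ast$, hence $V:=\L^2M_\E^\ast$, is globally generated of rank $\ell+1$, a general choice of $\ell$ sections fits into an Eagon--Northcott-type sequence $0\to\O_S^{\ell}\to V\to\I_Z\o\det V\to0$ with $Z$ the degeneracy locus, which by Bertini/Kleiman is smooth of pure codimension $2$ with class $c_2(V)$; tensoring this sequence by $\omega_S$ and using $H^0(\omega_S)=H^1(\omega_S)=0$ identifies $H^0(S,V\o\omega_S)$ with $H^0(S,\I_Z(K_S+(h-r-1)E))$, whose non-vanishing is exactly the existence of $D\in|K_S+(h-r-1)E|$ with $Z\subset D$. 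Reading these off against Step $(2)$ gives $(2)\Leftrightarrow(3)$.

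Finally $(3)\Leftrightarrow(4)$: adding one more general section of $V$ produces the degeneracy locus $C\supseteq Z$ of the $\ell+1$ sections, with $[C]=c_1(V)=(h-r-1)E$, and $C$ is smooth and irreducible because ampleness of $\E$ makes $M_\E^\ast$ (hence $V$) positive enough for the relevant connectedness of degeneracy loci. By adjunction $(K_S+(h-r-1)E)|_C=\omega_C$, so ``$Z\subset D$ for some $D\in|K_S+(h-r-1)E|$'' is equivalent to $Z$ lying on a canonical divisor of $C$, i.e.\ to $Z\subset C$ being a special effective divisor, lifting sections from $C$ to $S$ via $q=p_g=0$ once more. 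I expect the main obstacle to be Step $(2)\Leftrightarrow(3)$: pinning $\mathrm{coker}(\mu_2)$ \emph{exactly} to $H^2(S,\L^2M_\E)$ (rather than merely trapping it between cohomology groups) and, simultaneously, verifying that no degree $\ge 3$ obstruction to projective normality survives --- i.e.\ that $\L^2$ is genuinely the only relevant exterior power. A secondary technical point is arranging the Eagon--Northcott sequence in the clean form above with $Z$ reduced, and $C$ irreducible in Step $(3)\Leftrightarrow(4)$, both of which require extracting enough positivity of $M_\E^\ast$ from the ampleness of $\E$.
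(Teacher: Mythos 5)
Your treatment of the heart of the theorem, namely the chain $(2)\Leftrightarrow(3)\Leftrightarrow(4)$, is essentially the paper's argument: you reduce projective normality to $2$-normality by Koszul bookkeeping using $H^1(S,M_\E)=H^2(S,M_\E)=0$, identify the obstruction as $H^2(S,\L^2M_\E)\cong H^0(S,\L^2M_\E^\ast\o K_S)^\ast$, and translate it into geometry via degeneracy loci of general sections of the globally generated bundle $\L^2M_\E^\ast$, the sequence $0\to\O_S^{\oplus\ell}\to\L^2M_\E^\ast\to\I_{Z/S}((h-r-1)E)\to0$, and adjunction plus Serre duality on $C$ (your identification $\mathrm{coker}(\mu_2)\cong H^2(S,\L^2M_\E)$ is correct, and the bound on $\mathrm{coker}(\mu_m)$ — a subquotient of $\S^{m-2}H^0(\E)\o H^2(S,\L^2M_\E)$ rather than a quotient — suffices). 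Where you genuinely differ from the paper is $(1)\Leftrightarrow(2)$: instead of reducing to a projectively normal sectional curve through hyperplane sections, you kill all intermediate cohomology of $\OPE(m)$ directly, via $0$-regularity of $\S^m\E$ for $m\ge0$ and relative duality for $m\le-r$, where the needed $H^1(S,\S^k\E^\ast\o\det\E^\ast)=0$ is Griffiths vanishing for the ample bundle $\E$ (note $q=p_g=0$ is only used at $m=0$ here, not in that vanishing as you suggest). This shortcut works and is arguably cleaner than the paper's reduction.

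There are, however, two genuine gaps. First, you never show $Z\ne\emptyset$, and $(2)\Rightarrow(3)$ needs this: statement (3)(a) asserts $Z$ is a (nonempty) smooth $0$-dimensional scheme, and nothing in your construction rules out an empty degeneracy locus. The paper settles this by dualizing the sequence: if $Z=\emptyset$ one gets $0\to\O_S(-(h-r-1)E)\to\L^2M_\E\to\O_S^{\oplus\ell}\to0$, and Kodaira vanishing for the ample class $(h-r-1)E$ (here one uses that $E=\det\E$ is ample because $\E$ is) would give $h^0(S,\L^2M_\E)=\ell>0$, contradicting $H^0(S,\L^2M_\E)\subset H^0(S,\E)\o H^0(S,M_\E)=0$. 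You have all the ingredients but the step is absent. Second, your justification of (4)(h) — that ``ampleness of $\E$ makes $M_\E^\ast$ (hence $\L^2M_\E^\ast$) positive enough for the relevant connectedness of degeneracy loci'' — is not a valid mechanism: $\L^2M_\E^\ast$ is a quotient of a trivial bundle and is only globally generated, so Fulton--Lazarsfeld-type connectedness of degeneracy loci does not apply. The correct argument, as in the paper, is that smoothness of $Z$ and $C$ follows from generality of the sections (B\u{a}nic\u{a}--Kleiman transversality for globally generated bundles), while $C$, being a smooth effective divisor in the ample class $(h-r-1)E$, is connected and hence irreducible. Finally, a small point to make precise in $(3)\Leftrightarrow(4)$: rather than restricting $D$ to $C$ (which requires observing $C\not\subset D$, true since $D-C\in|K_S|$ is impossible as $p_g=0$), it is cleaner to use the exact sequence $0\to\O_S(K_S)\to\I_{Z/S}(K_S+(h-r-1)E)\to i_\ast\left(K_C(-Z)\right)\to0$ and $p_g=q=0$ to get $H^0(S,\I_{Z/S}(K_S+(h-r-1)E))\cong H^0(C,K_C(-Z))$, which handles both directions at once.
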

	
	\begin{bthm}\label{thm:projective-normality-hypersurface}
		Let $X\subset\P^{n+1}$ be a smooth hypersurface of degree $d\ge3$ with $2\le n\le 3$ and let $\E$ be an Ulrich bundle of rank $r$ on $X.$ Let $$\mu_\E\colon H^0(X,\E)\o H^0(X,\E)\to H^0(X,\E\o\E)$$ denote the multiplication of sections. Then:
		\begin{itemize}
			\item[(a)] If $n=2$ and $\det(\E)=\O_X(\frac{r}{2}(d-1)),$ then $\mu_\E$ cannot be surjective and $\E$ cannot be projectively normal if $d\ge 5,$ or $d=4$ and $r\le5,$ or $d=3$ and $r\le 2.$
			\item[(b)] If $n=3$ and $d\ge4,$ then $\mu_\E$ is never surjective and $\E$ cannot be projectively normal if $r>\frac{d+4}{3}.$
		\end{itemize}
	\end{bthm}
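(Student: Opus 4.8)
Put $V=H^0(X,\E)$, so $\dim V=rd$ by the Ulrich property, and let $M_\E$ be the kernel of evaluation, $0\to M_\E\to V\o\O_X\to\E\to0$ (the bundle $M_\E$ of Theorem~\ref{thm:proj-norm-0-regular-q=p=0}). Tensoring by $\E$ and using $H^1(X,\E)=0$ gives $\operatorname{coker}\mu_\E\cong H^1(X,\E\o M_\E)$; the symmetric square of the same sequence identifies the obstruction to surjectivity of $\Sym^2V\to H^0(X,\Sym^2\E)$ — i.e.\ degree-two normal generation of $\OPE(1)$ — with the corresponding $H^1$ for $\Sym^2$, and the alternating square does the same for $\wedge^2$. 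Since $\mu_\E$ surjective forces $\Sym^2V\to H^0(X,\Sym^2\E)$ surjective (the composite $V\o V\to H^0(\E\o\E)\to H^0(\Sym^2\E)$ factors through $\Sym^2V$), it suffices to prove the relevant $H^1$ is nonzero: for $n=2$ I will show the symmetric obstruction is nonzero, which defeats both $\mu_\E$ and projective normality at once, while for $n=3$ I will show the total obstruction $H^1(X,\E\o M_\E)$ is nonzero for every $d\ge4$ (handling $\mu_\E$) and its symmetric part is nonzero when $r>\tfrac{d+4}{3}$ (handling projective normality).

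The key input is that an Ulrich sheaf on the hypersurface $X$ is a matrix factorization, so $i_*\E$ has the two-term linear resolution $0\to\O_{\P^{n+1}}(-1)^{rd}\to\O_{\P^{n+1}}^{rd}\to i_*\E\to0$. Tensoring it over $\O_{\P^{n+1}}$ with $i_*\E$ and using $\operatorname{Tor}_1^{\P^{n+1}}(\O_X,\O_X)\cong\O_X(-d)$ produces the exact sequence
\[
0\to(\E\o\E)(-d)\to\E(-1)^{rd}\to M_\E\o\E\to0,
\]
together with its $\Sym^2$ and $\wedge^2$ analogues. Taking cohomology and invoking the Ulrich vanishing $H^i(X,\E(-1))=0$ (valid since $1\le 1\le n$) collapses the middle term, so that $\operatorname{coker}\mu_\E\cong H^2\bigl(X,(\E\o\E)(-d)\bigr)$ and $H^0\bigl(X,(\E\o\E)(-d)\bigr)=0$, and likewise with $\Sym^2\E$ or $\wedge^2\E$ in place of $\E\o\E$. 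Serre duality with $\omega_X=\O_X(d-n-2)$ then rewrites the $n=2$ obstructions as $H^0\bigl(X,(\Sym^2\E^\vee)(2d-4)\bigr)^\vee$, resp.\ $H^0\bigl(X,(\wedge^2\E^\vee)(2d-4)\bigr)^\vee$, and the $n=3$ obstructions as $H^1$ of the corresponding twisted duals.

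It then remains to settle these (non)vanishings. For $n=2$, part (a), the hypothesis $\det\E=\O_X(\tfrac r2(d-1))$ pins down the Chern classes of $\E$ from its Hilbert polynomial $\chi(\E(t))=rd\binom{t+2}{2}$ and Riemann–Roch on $X$ (with $H^2=d$, $K_X=(d-4)H$, $\chi(\O_X)=1+\binom{d-1}{3}$); one computes $\chi$ of the relevant twist, uses $H^0\bigl(X,(\E\o\E)(-d)\bigr)=0$ (equivalently the Serre-dual vanishing $h^2=0$) to get $h^0\ge\chi$, and checks that this $\chi$ is positive precisely when $d\ge5$, or $d=4$ and $r\le5$, or $d=3$ and $r\le2$ — which is exactly the content of part (a). For $n=3$, part (b), there is no constraint on $\det\E$; one instead runs the parallel Riemann–Roch computation on $X\subset\P^4$ ($\omega_X=\O_X(d-5)$, $\chi(\E(t))=rd\binom{t+3}{3}$) to show that $H^2\bigl(X,(\E\o\E)(-d)\bigr)\ne0$ for every $d\ge4$ and that its symmetric summand $H^2\bigl(X,(\Sym^2\E)(-d)\bigr)\ne0$ once $r>\tfrac{d+4}{3}$.

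The main obstacle is the low-degree range. The Euler-characteristic estimate is comfortable only for $d$ large, so at the boundary $d=5$ (for $n=2$) and, especially, at $d=3,4$ — where $\chi$ of the twist in question has the wrong sign — one cannot conclude from $\chi$ alone and must produce the missing cohomology classes by hand, e.g.\ using $\wedge^2\E^\vee\cong\wedge^{r-2}\E\o(\det\E)^\vee$ together with the hypothesis on $\det\E$ to build an explicit nonzero section of $(\E^\vee\o\E^\vee)(2d-n-2)$. It is exactly the breakdown of such constructions for large $r$ that forces the rank restrictions ($r\le5$ for $d=4$, $r\le2$ for $d=3$ in part (a), and $r>\tfrac{d+4}{3}$ in part (b)), so the delicate point is to verify that the arithmetic inequalities obtained above carve out precisely the stated ranges and that the vanishing $H^0\bigl(X,(\E\o\E)(-d)\bigr)=0$ leaves no overlooked contribution to $h^2$.
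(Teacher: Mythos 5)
Your reduction $\mr{coker}\,\mu_\E\cong H^1(X,M_\E\o\E)\cong H^2\bigl(X,(\E\o\E)(-d)\bigr)$ is correct, and together with $\chi\bigl((\E\o\E)(-d)\bigr)=\chi(\E\o\E)-r^2d^2$ it does prove the non-surjectivity of $\mu_\E$ in (b) by essentially the same arithmetic as the paper (which instead uses (\ref{eq:ulrich-tensor-res}) to get $H^2(X,\E\o\E)=H^3(X,\E\o\E)=0$ and compares $h^0(\E\o\E)\ge\chi(\E\o\E)$ with $r^2d^2$). The genuine gap is in the symmetric refinement, on which all of (a) and the projective-normality half of (b) rest. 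The ``$\S^2$ and $\L^2$ analogues'' of $0\to(\E\o\E)(-d)\to\E(-1)^{\oplus rd}\to M_\E\o\E\to0$ that you invoke are never constructed, and the identification of $\mr{coker}\bigl(\S^2H^0(X,\E)\to H^0(X,\S^2\E)\bigr)$ with $H^2\bigl(X,(\S^2\E)(-d)\bigr)$ is provably false: the resolution (\ref{eq:ulrich-tensor-res}) is not equivariant for the swap involution, so the isomorphism $\mr{coker}\,\mu_\E\cong H^2\bigl((\E\o\E)(-d)\bigr)$ need not respect the $\S^2/\L^2$ decompositions. Concretely, on the surface ($n=2$): by rank--nullity and the $0$-regularity of $\S^2\E$ (Corollary \ref{cor:regular-symmetric}), the difference $\dim\mr{coker}-\dim\ker$ of the map $\S^2H^0(S,\E)\to H^0(S,\S^2\E)$ equals $\chi(\S^2\E)-\binom{rd+1}{2}$, whereas if your identification held (with $H^0\bigl((\S^2\E)(-d)\bigr)=0$) this difference would equal $\chi\bigl((\S^2\E)(-d)\bigr)=\chi(\S^2\E)-\frac{r(r+1)d^2}{2}$; the two differ by $\frac{rd(d-1)}{2}>0$. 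The correct symmetric obstruction is $H^1$ of $\ker\bigl(H^0(\E)\o\E\to\S^2\E\bigr)$ and is governed by $H^2(X,\L^2M_\E)$ --- an exterior, not symmetric, power (Lemma \ref{lem:fact}, Example \ref{ex:2-normality}) --- and relating such exterior powers of $M_\E$ to the matrix-factorization structure is exactly the nontrivial input used in Remark \ref{rmk:hyper}, not a formal consequence of (\ref{eq:ulrich-tensor-res}).

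The numerical fallout is fatal for (a): your criterion $\chi\bigl((\S^2\E)(-d)\bigr)>0$ unwinds to $r(d-5)>6$, which is empty for $d=3,4,5$ and misses most of the stated range --- this is precisely why your last paragraph has to promise ad hoc constructions of sections at low degree, constructions the theorem does not need. The paper's actual argument is a direct count: on a surface $\E\o\E$ and $\S^2\E$ are $0$-regular, so $h^0(S,\S^2\E)=\chi(S,\S^2\E)$ exactly, and $h^0(S,\S^2\E)>\dim\S^2H^0(S,\E)=\frac{rd(rd+1)}{2}$ holds precisely for $d\ge5$, or $d=4,\ r\le5$, or $d=3,\ r\le2$ (Lemma \ref{lem:ulrich-tensor-2d-hyper}); this defeats $2$-normality and, since $H^0(\E\o\E)\to H^0(\S^2\E)$ is a split surjection, also defeats $\mu_\E$, in one stroke. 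For $n=3$, where $0$-regularity of tensor powers is unavailable, the paper uses (\ref{eq:ulrich-tensor-res}) to get $H^2(X,\S^2\E)=H^3(X,\S^2\E)=0$, hence $h^0(X,\S^2\E)\ge\chi(X,\S^2\E)$, and Hirzebruch--Riemann--Roch yields the bound $r>\frac{d+4}{3}$ (Lemma \ref{lem:ulrich-tensor-3d-hyper}). In summary: your treatment of $\mu_\E$ in (b) is sound and equivalent to the paper's, but the symmetric-part identification is wrong, so neither part (a) nor the projective-normality statement in (b) is established by the proposal.
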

	
	As mentioned above, the behavior of the projective normality on (low-dimensional) hypersurfaces is the most unexpected: a general hypersurface contains no lines if its degree is greater than the double of its dimension, therefore Ulrich bundles are expected to be always very positive (at least in this situation). Theorem \ref{thm:projective-normality-hypersurface} and Remark \ref{rmk:hyper} seem to suggest that just low-rank Ulrich bundles are (potentially) projectively normal. However, by Buchweitz-Greuel-Schreyer conjecture, which has been proved for the general hypersurface \cite{erman2021matrix}, the rank of Ulrich bundles is expected to be grater than or equal to $2^{\lfloor\frac{n-2}{2}\rfloor},$ where $n$ is the dimension of the hypersurface (see also \cite{lopez2024ulrich} for the non-existence of low-rank Ulrich bundles on hypersurfaces). Therefore, against the expectations, it seems that Ulrich bundles on hypersurfaces are rarely projectively normal.\\
	\\
	{\bf Acknowledgments.} I would like to thank my PhD advisor, Prof. Angelo Felice Lopez, for his extremely valuable guidance throughout the preparation of this paper.
	
	\section{Notations and basic facts on Ulrich bundles}
	Throughout the paper we will adopt the following conventions unless otherwise specified:
	\begin{itemize}
		\setlength\itemsep{0em}
		\item All schemes are separated and of finite type over the field of complex numbers $\C.$ A \emph{curve} and a \emph{surface} are connected equidimensional schemes of dimension $1$ and $2$ respectively.
		\item A variety is an integral scheme and subvarieties are always closed. 
		\item For a smooth variety $X,$ we write $q(X)=h^1(X,\O_X)$ for the \emph{irregularity} and $p_g(X)=h^0(X,K_X)$ for the \emph{geometric genus.} We say $X$ is \emph{regular} if $q(X)=0.$
		\item A point $x\in X$ is always closed and its ideal sheaf is denoted by $\m_x.$ The ideal sheaf of a closed subscheme $Y\subset X$ is denoted by $\I_{Y/X}.$
		\item Given a line bundle $L,$ we write $\F(pL)=\F\o L^{\o p}$ for any sheaf $\F$ and any $p\in\Z.$
		\item For an embedded projective scheme $X\subset\P^N,$ we denote by $I_{X/\P^N}=\bigoplus_{t\in\Z}H^0(\P^N,\I_{X/\P^N}(t))$ the \emph{homogeneous saturated ideal} of $X$ in $\P^N$ and by $R_X=\C[x_0,\dots,x_N]/I_{X/\P^N}$ the \emph{homogeneous coordinate ring} of $X.$
		\item For a vector bundle $\E$ on a scheme $X$ we set $\P(\E)=\mr{Proj}(\Sym(\E))$ for the projective bundle of $\E$ and we denote by $\pi\colon\P(\E)\to X$ the natural projection.
		\item A property is \emph{general} if it holds in the complement of a proper (Zariski) closed subset. A property is \emph{very general} if it is satisfied off a countable union of proper (Zariski) closed subsets.
	\end{itemize}
	
	\begin{defi}
		Let $\E$ be a globally generated vector bundle on a scheme $X.$ The \emph{syzygy bundle of $\E$} is $$M_\E=\ker\left(H^0(X,\E)\o\O_X\to\E\right)$$ and we call the exact sequence $$\begin{tikzcd}
			0\rar&M_\E\rar&H^0(X,\E)\o\O_X\rar&\E\rar&0,
		\end{tikzcd}$$ the \emph{syzygy exact sequence} of $\E.$
	\end{defi}
	
	\begin{defi}
		A vector bundle $\E$ on a projective scheme $X$ is \emph{(very) ample} on $X$ if $\OPE(1)$ is (very) ample on $\P(\E).$ When $\E$ is very ample, we always consider the embedding $\P(\E)\hookrightarrow\P(H^0(X,\E))$ induced by the complete linear system $|\OPE(1)|.$
	\end{defi}
	
	We now come to some generalities on Ulrich bundles.
	
	\begin{defi}
		Let $X$ be a smooth projective variety and let $B$ be a globally generated ample line bundle on $X.$ A vector bundle $\E$ on $X$ is \emph{$B$-Ulrich} if $H^i(X,\E(-pB))=0$ for all $i\ge0$ and $1\le p\le \dim X.$ If $B$ is very ample defining an embedding $X\subset\P^N,$ we simply say that $\E$ is Ulrich on $X\subset\P^N.$
	\end{defi}
	
	We recall the main properties of $B$-Ulrich bundles. For proofs we refer to \cite{coskun2017survey,beauville2018introduction,costa2021ulrich} or to \cite[Lemma 3.9]{buttinelli2024positivity} for the ample and free case.
	
	\begin{rmk}\label{rmk:ulrich-facts}
		Let $X$ be a smooth projective variety of dimension $n\ge1$ and let $B$ be a globally generated ample line bundle on $X$ with $B^n=d.$ Then a $B$-Ulrich bundle $\E$ of rank $r$ is semistable, aCM and $0$-regular (in the sense of Castelnuovo-Mumford). In particular $\E$ is globally generated and $h^0(X,\E)=rd.$ Moreover $c_1(\E)\cdot B^{n-1}=\frac{r}{2}(K_X+(n+1)B)$ which means that $c_1(\E)=\frac{r}{2}(K_X+(n+1)B)$ if $\Pic(X)\cong\Z.$ Observe also that $\O_X(kB)$ is $B$-Ulrich if and only if $(X,B,k)=(\P^{n},\OPn(1),0).$
	\end{rmk}
	
	Some of these facts will be used without further reference.

	\section{Castelnuovo-Mumford regularity of tensor product}
	In this section we are going to recollect some known results on the Castelnuovo-Mumford regularity of the tensor product of coherent sheaves, focusing the attention on vector bundles. We start by recalling the definition. For a detailed account, see \cite[§1.8]{lazarsfeld2017positivity}.
	
	\begin{defi}
		Let $X$ be a projective variety and let $B$ a globally generated ample line bundle on $X.$ A coherent sheaf $\F$ on $X$ is \emph{$m$-regular} with respect to $B$ if $H^i(X,\F((m-i)B))=0$ for all $i>0.$ The \emph{regularity of $\F$ (with respect to $B$)} is 
		$$\reg_B(\F):=\min\left\{s\in\Z\ |\ \F\ \text{is}\ s\text{-regular with respect to}\ B\right\}.$$
	\end{defi}
	
	As is well-known, e.g. from \cite[Proposition 1.8.9]{lazarsfeld2017positivity}, the (Castelnuovo-Mumford) regularity of the tensor product of two vector bundles on the projective space is (at most) the sum of the regularity of each vector bundle. This no longer holds for other varieties, mainly because the polarization is not $(-1)$-regular (with respect to itself). In \cite[§3]{arapura2004frobenius}, Arapura observes that one needs to consider the regularity of the structure sheaf in order to compute the regularity of tensor products. 
	
	For any projective variety $X$ endowed with a globally generated ample line bundle $B$ we fix $$M:=\max\left\{1,\reg_B(\O_X)\right\}$$ for the rest of the section.
	
	\begin{rmk}
		Normal polarized varieties $(X,B)$ of dimension $n\ge2$ with at worst $\mathbf{Q}$-factorial terminal singularities having $M=1$ are classified. Indeed, $\reg(\O_X)\le1$ implies \[
		h^0(X,K_X+(n-1)B)=h^n(X,(1-n)B)=0\ \ \ \mbox{and}\ \ \ h^1(X,\O_X)=0
		\] which, by \cite[Corollary 7.28 \& Table 7.1]{beltrametti2011adjunction}, force $(X,B)$ to be one of the following: 
		\begin{itemize}
			\item $(\P^n,\OPn(1))$;
			\item $(Q^n,\O_{\P^{n+1}}(1)_{|Q^n})),$ where $Q^n\subset\P^{n+1}$ is a quadric;
			\item $(\P(\F),\O_{\P(\F)}(1)),$ where $\F$ is an ample vector bundle of rank $n$ over $\P^1$;
			\item a possibly degenerate generalized cone $C_n(\P^2,\O_{\P^2}(2))$ over $(\P^2,\O_{\P^2}).$
		\end{itemize}
	\end{rmk}
	
	The following is a version of \cite[Example 1.8.7 \& Proposition 1.8.8]{lazarsfeld2017positivity} for every polarized variety. This is proven in \cite[Corollary 3.2 \& Lemma 3.9]{arapura2004frobenius}.
	
	\begin{lemma}\label{lem:m-regular-resolution}
		Let $X$ be a projective variety of dimension $n\ge1$ and let $B$ be a globally generated ample line bundle on $X.$ 
		
		A $p$-regular coherent sheaf $\F$ on $X$ admits a long resolution
		\begin{equation}\label{eq:resolution}
			F_\bullet\colon\hspace{1cm}	\cdots\longrightarrow W_\l\o B^{\otimes(-p-\l M)}\longrightarrow\cdots\longrightarrow W_1\o B^{\o(-p-M)}\longrightarrow W_0\o B^{\o(-p)}\longrightarrow\F\longrightarrow0
		\end{equation}
		where the $W_i'$s are some finite-dimensional vector spaces and $W_0=H^0(X,\F(pB)).$ 
		
		Conversely, if a coherent sheaf $\g$ on $X$ admits a possibly infinite resolution by coherent sheaves \[
		G_\bullet\colon\hspace{1cm}\cdots\longrightarrow G_h\longrightarrow\cdots\longrightarrow G_1\longrightarrow G_0\longrightarrow\g\longrightarrow 0
		\] with $G_j$ being $q_j$-regular for $0\le j\le n-1$ (resp. $0\le j\le n$), then $\g$ is $q$-regular (resp. the map  $$H^0(X,G_0(q'B))\to H^0(X,\g(q'B))$$ is surjective), with $q=\max_{0\le j\le n-1}\{q_j-j\}$  (resp. $q'=\max_{0\le j\le n}\{q_j-j\}$).
	\end{lemma}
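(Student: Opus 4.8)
The plan is to prove the two implications separately, in each case by cutting the complex into short exact sequences of syzygy sheaves and propagating Castelnuovo--Mumford regularity through the associated long exact cohomology sequences; the only external inputs are Mumford's theorem (a coherent sheaf which is $0$-regular with respect to a globally generated ample $B$ is globally generated, is $m$-regular for every $m\ge0$, and has surjective multiplication maps $H^0(X,\F)\o H^0(X,\O_X(kB))\to H^0(X,\F(kB))$ for $k\ge0$) and the tautological fact that $\O_X$ is $M$-regular.

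For the forward implication I would argue by induction on $\ell$, constructing the terms $W_\ell\o B^{\o(-p-\ell M)}$ of \eqref{eq:resolution} one at a time and simultaneously showing that the $\ell$-th syzygy sheaf $\F_\ell$ --- with $\F_0:=\F$, and $\F_{\ell+1}$ defined below --- is $(p+\ell M)$-regular. Granting this for $\F_\ell$, the twist $\F_\ell((p+\ell M)B)$ is $0$-regular, hence globally generated, so with $W_\ell:=H^0(X,\F_\ell((p+\ell M)B))$ the evaluation map is a surjection $W_\ell\o B^{\o(-p-\ell M)}\twoheadrightarrow\F_\ell$ whose kernel $\F_{\ell+1}$ sits in $0\to\F_{\ell+1}\to W_\ell\o B^{\o(-p-\ell M)}\to\F_\ell\to0$; taking $W_0=H^0(X,\F(pB))$ at the first step gives the stated description of $W_0$. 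Twisting this sequence by $(p+(\ell+1)M-i)B$ and chasing cohomology, for $i\ge2$ the term $W_\ell\o H^i(X,\O_X((M-i)B))$ vanishes because $\O_X$ is $M$-regular and the term $H^{i-1}(X,\F_\ell((p+\ell M+M-1-(i-1))B))$ vanishes because $\F_\ell$ is $(p+\ell M+M-1)$-regular (Mumford), so $H^i(X,\F_{\ell+1}((p+(\ell+1)M-i)B))=0$; and for $i=1$ the only remaining point is that $W_\ell\o H^0(X,\O_X((M-1)B))\to H^0(X,\F_\ell((p+\ell M+M-1)B))$ is surjective, which is exactly the multiplication map of the $0$-regular sheaf $\F_\ell((p+\ell M)B)$ against $H^0(X,\O_X((M-1)B))$. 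This closes the induction.

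For the converse, put $Z_0:=\g$, $G_{-1}:=\g$ and $Z_{j+1}:=\ker(G_j\to G_{j-1})$, so that $0\to Z_{j+1}\to G_j\to Z_j\to0$ for all $j\ge0$. Fix $i$ with $1\le i\le n=\dim X$. Climbing the resulting chain of long exact sequences, $H^i(X,\g((q-i)B))$ is controlled by $H^i(X,G_0((q-i)B))$ and $H^{i+1}(X,Z_1((q-i)B))$, the latter by $H^{i+1}(X,G_1((q-i)B))$ and $H^{i+2}(X,Z_2((q-i)B))$, and so on; after $n-i+1$ such steps the syzygy contribution is $H^{n+1}(X,Z_{n-i+1}((q-i)B))$, which vanishes because $\dim X=n$. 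Since $H^{i+k}(X,G_k((q-i)B))=H^{i+k}(X,G_k((q+k-(i+k))B))=0$ whenever $q+k\ge q_k$, choosing $q=\max_{0\le j\le n-1}\{q_j-j\}$ forces $H^i(X,\g((q-i)B))=0$ for all $1\le i\le n$, i.e. $\g$ is $q$-regular. For the surjectivity statement, $H^0(X,G_0(q'B))\to H^0(X,\g(q'B))$ is onto as soon as $H^1(X,Z_1(q'B))=0$; applying the regularity statement just proved to the resolution $\cdots\to G_2\to G_1\to Z_1\to0$ (which involves $G_j$ for $1\le j\le n$) shows that $Z_1$ is $\big(\max_{1\le j\le n}\{q_j-j+1\}\big)$-regular, hence $(q'+1)$-regular, so $H^1(X,Z_1(q'B))=0$.

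The cohomology bookkeeping is routine; the genuinely load-bearing step, and the one I would treat most carefully, is the $i=1$ case in the construction of $F_\bullet$. It is precisely there that one needs both the convention $M=\max\{1,\reg_B(\O_X)\}$ (so that $M-1\ge0$ and $H^0(X,\O_X((M-1)B))$ makes sense even when $\reg_B(\O_X)\le0$) and Mumford's surjectivity theorem for $0$-regular sheaves, and it is this step that pins the regularity gain of each syzygy to be exactly $M$, so that the exponents $-p-\ell M$ in \eqref{eq:resolution} come out correctly.
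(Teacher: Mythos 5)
Your proposal is correct. The paper itself does not spell out a proof of this lemma but simply cites Arapura (Corollary 3.2 and Lemma 3.9 of \cite{arapura2004frobenius}), and your argument is essentially the standard one behind that reference: Mumford's theorem for $0$-regular sheaves (global generation, upward propagation of regularity, surjectivity of multiplication maps) together with the $M$-regularity of $\O_X$ to bootstrap the resolution with the exact regularity gain $M$ at each step, and the usual chopping of $G_\bullet$ into syzygy short exact sequences, with the top cohomology vanishing in degree $n+1$, for the converse and for the surjectivity statement applied to $Z_1$ — so it matches the intended proof, including the delicate $i=1$ step.
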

	
	This leads to a generalization of \cite[Proposition 1.8.9]{lazarsfeld2017positivity}.
	
	\begin{cor}\label{cor:regularity-tensor}
		Let $X$ be a projective variety of dimension $n\ge1$ and let $B$ be a globally generated ample line bundle on $X.$ Let $\E$ and $\F$ be coherent sheaves on $X$ such that at every point of $X$ either $\E$ or $\F$ is locally free. If $\E$ is $e$-regular and $\F$ is $f$-regular, then $\E\o\F$ is $\left(e+f+(n-1)(M-1)\right)$-regular and the multiplication map $$H^0(X,\F(fB))\o H^0(\E((e+n(M-1))B))\rightarrow  H^0(X,(\E\o\F)(e+f+n(M-1))B)$$ is surjective.
	\end{cor}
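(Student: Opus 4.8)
The plan is to run the proof of \cite[Proposition 1.8.9]{lazarsfeld2017positivity} essentially verbatim, with Lemma~\ref{lem:m-regular-resolution} replacing its projective-space analogue; the appearance of $M$ in the bound is forced by the twists $B^{\o(-p-\l M)}$ occurring in the resolutions produced by that lemma.

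First I would apply the first half of Lemma~\ref{lem:m-regular-resolution} to the $f$-regular sheaf $\F$, obtaining a (possibly infinite) resolution
\begin{equation*}
F_\bullet\colon\quad\cdots\longrightarrow W_\l\o B^{\o(-f-\l M)}\longrightarrow\cdots\longrightarrow W_1\o B^{\o(-f-M)}\longrightarrow W_0\o B^{\o(-f)}\longrightarrow\F\longrightarrow0
\end{equation*}
with $W_0=H^0(X,\F(fB))$ and the $W_\l$ finite-dimensional. Since each term $W_\l\o B^{\o(-f-\l M)}$ is locally free, hence flat, $F_\bullet$ is a flat resolution of $\F$; tensoring it with $\E$ therefore yields a complex whose homology sheaves in degree $i>0$ are the higher sheaves $\mathscr{T}\!or_i(\F,\E)$. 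The hypothesis that at each $x\in X$ one of $\E_x,\F_x$ is free over $\O_{X,x}$ makes all of these vanish, so the tensored complex
\begin{equation*}
\cdots\longrightarrow W_\l\o\E\big((-f-\l M)B\big)\longrightarrow\cdots\longrightarrow W_0\o\E(-fB)\longrightarrow\E\o\F\longrightarrow0
\end{equation*}
is exact.

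Next I would record that, writing $G_\l:=W_\l\o\E\big((-f-\l M)B\big)$ and using that $\E$ is $e$-regular together with the elementary fact that twisting by $tB$ shifts regularity by $-t$, the sheaf $G_\l$ is $q_\l$-regular with $q_\l:=e+f+\l M$. Then I would invoke the converse half of Lemma~\ref{lem:m-regular-resolution} twice on this resolution of $\E\o\F$. With the range $0\le\l\le n-1$ it gives that $\E\o\F$ is $q$-regular with $q=\max_{0\le\l\le n-1}\{q_\l-\l\}=e+f+\max_{0\le\l\le n-1}\{\l(M-1)\}=e+f+(n-1)(M-1)$, the last step because $M\ge1$; this is the regularity assertion. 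With the range $0\le\l\le n$ it gives surjectivity of $H^0(X,G_0(q'B))\to H^0(X,(\E\o\F)(q'B))$ for $q'=\max_{0\le\l\le n}\{q_\l-\l\}=e+f+n(M-1)$. Finally I would unwind $G_0=W_0\o\E(-fB)=H^0(X,\F(fB))\o\E(-fB)$: twisting by $q'B$ (so that the twist on the $\E$-factor becomes $q'-f=e+n(M-1)$) and taking global sections turns this surjection into
\begin{equation*}
H^0(X,\F(fB))\o H^0\big(X,\E((e+n(M-1))B)\big)\longrightarrow H^0\big(X,(\E\o\F)((e+f+n(M-1))B)\big),
\end{equation*}
which is exactly the multiplication of sections of the statement, since the augmentation $G_0\to\E\o\F$ induces, on sections, the cup product followed by the rearrangement of the twists.

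The only step requiring genuine care is the exactness of the complex obtained by tensoring the resolution of $\F$ with $\E$: one must use that it is the \emph{terms} of $F_\bullet$, not $\F$ itself, that are flat, so that $F_\bullet\o\E$ computes $\mathscr{T}\!or_\bullet(\F,\E)$, and then that the pointwise local-freeness hypothesis annihilates the higher Tor sheaves. After that it is only bookkeeping with the twists and two elementary maxima, where one repeatedly uses $M\ge1$ so that $\l\mapsto\l(M-1)$ is non-decreasing. One could instead resolve $\E$; resolving $\F$ is simply the choice that makes the surjectivity statement come out in exactly the asymmetric form asserted.
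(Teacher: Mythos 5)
Your proposal is correct and follows essentially the same route as the paper: apply Lemma~\ref{lem:m-regular-resolution} to resolve $\F$, tensor the resolution with $\E$ (exactness holding because the terms are locally free and at each point one of the two stalks is flat, so the higher Tor sheaves vanish), note that each term $W_\l\o\E((-f-\l M)B)$ is $(e+f+\l M)$-regular by the $e$-regularity of $\E$, and conclude with the converse half of the lemma, the two maxima giving exactly $e+f+(n-1)(M-1)$ and $e+f+n(M-1)$. Your write-up is in fact slightly more explicit than the paper's (the Tor argument and the identification of the augmentation with the multiplication map are spelled out rather than asserted), but there is no difference in method.
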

	\begin{proof}
		Consider the resolution $F_\bullet$ of $\F$ given in (\ref{eq:resolution}) and twist it through by $\E.$ The resulting complex
		\[
		\cdots\longrightarrow {W_\l\o\E((-f-\l M)B)}\longrightarrow\cdots\longrightarrow {W_1\o\E((-f-M)B)}\longrightarrow W_0\o\E(-fB)\xlongrightarrow{\eps}{\E\o\F}\longrightarrow0
		\] is still exact: as $W_\l\o B^{\o(-f-\l M)}$ is flat, the claim follows by the fact that it remains exact on the right since at stalk level either $\E$ or $\F$ is flat. We immediately see that $W_j\o\E((-f-jM)B)$ is $(e+f+jM)$-regular for every $0\le j\le n.$ Indeed, for $i>0$ we have 
		\[
		H^i(X,W_j\o\E((-f-jM)B)(e+f+jM-i)B)\cong H^i(X,\E((e-i)B))^{\oplus\dim W_i}=0
		\] by the $e$-regularity of $\E.$ The conclusion follows by the second part of Lemma \ref{lem:m-regular-resolution}.
	\end{proof}
	
	\begin{lemma}\label{lem:regularity-tensor}
		Let $X$ be a projective variety and let $B$ be a globally generated ample line bundle on $X.$  Let $\E$ and $\F$ be coherent sheaves on $X.$ If $\E$ is $e$-regular and $\F$ is $f$-regular, then $\E\o\F$ is $(e+f)$-regular assuming one of the following holds:
		\begin{itemize}
			\item[(1)] $M=1$ and at every point of $X$ either $\E$ or $\F$ is locally free.
			\item[(2)] $X$ is a curve and at every point of $X$ either $\E$ or $\F$ is locally free.
			\item[(3)] $X$ is a surface and $B$ is very ample.
		\end{itemize} 
	\end{lemma}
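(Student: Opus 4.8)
The plan is the following. Parts (1) and (2) should follow at once from Corollary \ref{cor:regularity-tensor}, whose correction term $(n-1)(M-1)$ vanishes: it is $0$ because $M=1$ in case (1) and because $n=1$ in case (2) (in both cases the local‑freeness assumption made is exactly the one required by the corollary). So the whole content lies in case (3); from now on I write $X=S$, $n=2$ and $\mc G=\E\o\F$.

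Since $\dim S=2$, the sheaf $\mc G$ is $(e+f)$-regular if and only if $H^1(S,\mc G((e+f-1)B))=0$ and $H^2(S,\mc G((e+f-2)B))=0$. The $H^2$ vanishing I would obtain directly, without using very ampleness: $f$-regularity of $\F$ makes $\F(fB)$ globally generated, giving a surjection $W\o B^{\o(-f)}\twoheadrightarrow\F$ with $W=H^0(S,\F(fB))$; tensoring with $\E((e+f-2)B)$ yields a surjection $W\o\E((e-2)B)\twoheadrightarrow\mc G((e+f-2)B)$. On a surface the functor $H^2(S,-)$ is right exact, and $H^2(S,\E((e-2)B))=0$ by $e$-regularity of $\E$, so $H^2(S,\mc G((e+f-2)B))=0$; the same reasoning in fact gives $H^2(S,\mc G(kB))=0$ for every $k\ge e+f-2$.

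For the $H^1$ vanishing — which is where very ampleness is genuinely needed — the idea is to descend to a general curve section. First I would reduce, by a dévissage on torsion, to the case in which $\E$ and $\F$ are torsion‑free, so that for a general $C$ both restriction sequences below are exact and $\E|_C,\F|_C$ are locally free on $C$; then, $B$ being very ample, Bertini provides a general smooth irreducible curve $C\in|B|$. Restriction to $C$ preserves regularity: in the long exact sequence of $0\to\E((e-2)B)\to\E((e-1)B)\to\E|_C((e-1)B)\to0$ the two cohomology groups adjacent to $H^1(C,\E|_C((e-1)B))$ vanish by $e$-regularity of $\E$, so $\E|_C$ is $e$-regular on $C$, and likewise $\F|_C$ is $f$-regular. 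Hence, by case (2) applied on the curve $C$, $\mc G|_C=\E|_C\o\F|_C$ is $(e+f)$-regular on $C$, i.e. $H^1(C,\mc G|_C(kB))=0$ for all $k\ge e+f-1$. I would then run a descending induction on $k$, from $k=e+f+M-2$ (where $H^1(S,\mc G(kB))=0$ since $\mc G$ is $(e+f+M-1)$-regular by Corollary \ref{cor:regularity-tensor}) down to $k=e+f-1$: in the long exact sequence of $0\to\mc G((k-1)B)\to\mc G(kB)\to\mc G|_C(kB)\to0$, once $H^1(S,\mc G(kB))=0$ the connecting map is surjective and $H^1(S,\mc G((k-1)B))\cong\mr{coker}\bigl(H^0(S,\mc G(kB))\to H^0(C,\mc G|_C(kB))\bigr)$, so it is enough to prove that restriction of global sections is surjective for the finitely many values $e+f\le k\le e+f+M-2$.

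I expect this last surjectivity to be the main obstacle, since one cannot argue in a circle: the plan is to feed in very ampleness of $B$ concretely — through the choice of $C$ and through the vanishing $H^1(S,\O_S(jB))=0$ for $j\ge M-1$, which holds by definition of $M$ — together with the surjectivity‑onto‑global‑sections statements of Lemma \ref{lem:m-regular-resolution} and Corollary \ref{cor:regularity-tensor} applied to suitable twisted presentations of $\mc G$ and of $\mc G|_C$. A secondary, routine point is the dévissage at the outset reducing arbitrary coherent $\E,\F$ to the torsion‑free case so that the restriction sequences used above are exact.
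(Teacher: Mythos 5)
Your treatment of cases (1) and (2) coincides with the paper's: both follow from Corollary \ref{cor:regularity-tensor} because the correction term $(n-1)(M-1)$ vanishes. For case (3), however, the paper does not argue from scratch at all: it simply invokes \cite[Lemma 1.4 \& Proposition 1.5]{sidman2002castelnuovomumford}, and your attempt to reprove this case has genuine gaps. The decisive one is the step you yourself flag as ``the main obstacle'': the surjectivity of the restriction maps $H^0(S,\mc{G}(kB))\to H^0(C,\mc{G}|_C(kB))$ for $e+f\le k\le e+f+M-2$ is exactly where the whole difficulty of the surface case sits, and the plan you sketch for it (feeding in $H^1(S,\O_S(jB))=0$ for $j\ge M-1$ together with Lemma \ref{lem:m-regular-resolution} and Corollary \ref{cor:regularity-tensor} applied to ``suitable twisted presentations'') is not an argument; in particular it risks circularity, since those presentations of $\mc{G}$ are controlled only up to the regularity bound $e+f+(M-1)$ that you are trying to improve. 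As it stands, case (3) is not proved.

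There are two further problems. First, the base of your descending induction asserts that $\mc{G}$ is $(e+f+M-1)$-regular ``by Corollary \ref{cor:regularity-tensor}'', but that corollary requires that at every point either $\E$ or $\F$ be locally free, a hypothesis that case (3) does not grant (and which torsion-freeness does not restore: torsion-free sheaves on a surface need not be locally free everywhere, and on a singular surface the non-locally-free locus can even be a curve). Without the corollary you have no a priori twist at which $H^1$ vanishes to start the induction. Second, the ``routine d\'evissage on torsion'' is not routine: tensoring the sequence $0\to T\to\E\to\E'\to0$ with $\F$ is not exact, so Tor sheaves appear and must have their cohomology controlled, and the torsion subsheaf $T$ does not obviously inherit $e$-regularity from $\E$ (one only gets that $H^1(S,T((e-1)B))$ is a quotient of $H^0(S,\E'((e-1)B))$). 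Handling precisely these Tor corrections and restriction-to-curve arguments for arbitrary coherent sheaves, with $B$ very ample, is the content of Sidman's Lemma 1.4 and Proposition 1.5; either carry out that analysis in full or cite it, as the paper does.
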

	
	\begin{proof}
		If either (1) or (2) holds, the claim follows immediately from Corollary \ref{cor:regularity-tensor}. Finally (3) is just \cite[Lemma 1.4 \& Proposition 1.5]{sidman2002castelnuovomumford}.
	\end{proof}
	
	\begin{cor}\label{cor:regular-symmetric}
		Let $X$ be a projective variety of dimension $n\ge1$ and let $B$ be a globally generated ample line bundle on $X.$ Given an $e$-regular vector bundle $\E$ on $X,$ the bundles $\E^{\o p},\S^p\E,\Lambda^p\E$ are $(pe)$-regular if one of the following holds:
		\begin{itemize}
			\item[(1)] $M=1.$
			\item[(2)] $X$ is a curve.
			\item[(3)] $X$ is a surface and $B$ is very ample.
		\end{itemize}
	\end{cor}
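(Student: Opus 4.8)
The plan is to reduce everything to the tensor power $\E^{\o p}$, treating $\E^{\o p}$ by induction on $p$ via Lemma \ref{lem:regularity-tensor}, and then to extract $\S^p\E$ and $\Lambda^p\E$ as direct summands.

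First I would dispose of $\E^{\o p}$. The case $p=1$ is the hypothesis. For $p\ge2$ write $\E^{\o p}=\E^{\o(p-1)}\o\E$; by the inductive hypothesis $\E^{\o(p-1)}$ is $((p-1)e)$-regular, and $\E$ is $e$-regular. Since $\E$, and hence $\E^{\o(p-1)}$, is a vector bundle, the local-freeness hypothesis of Lemma \ref{lem:regularity-tensor}(1)--(2) is automatic, while case (3) imposes no such requirement; so in each of the three situations Lemma \ref{lem:regularity-tensor} applies and yields that $\E^{\o p}=\E^{\o(p-1)}\o\E$ is $\big((p-1)e+e\big)$-regular, i.e. $(pe)$-regular. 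The induction clearly preserves the standing "vector bundle" assumption, since tensor powers of vector bundles are vector bundles.

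Next I would pass to $\S^p\E$ and $\Lambda^p\E$. Because we work over $\C$, the averaging operators $\frac{1}{p!}\sum_{\s\in\mathfrak{S}_p}\s$ and $\frac{1}{p!}\sum_{\s\in\mathfrak{S}_p}\mr{sgn}(\s)\,\s$ are idempotent endomorphisms of the locally free sheaf $\E^{\o p}$ whose images are $\S^p\E$ and $\Lambda^p\E$ respectively, so both are direct summands of $\E^{\o p}$. It then remains to observe that a direct summand of an $m$-regular coherent sheaf is again $m$-regular: if $\F\cong\g\oplus\g'$, then for every $i>0$ one has $H^i(X,\F((m-i)B))\cong H^i(X,\g((m-i)B))\oplus H^i(X,\g'((m-i)B))$, and the vanishing for $\F$ forces the vanishing for $\g.$ Applying this with $\F=\E^{\o p}$, which is $(pe)$-regular by the first step, gives that $\S^p\E$ and $\Lambda^p\E$ are $(pe)$-regular.

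I do not expect a serious obstacle: the only points needing a little care are keeping the local-freeness hypothesis alive through the induction (which is automatic here) and the use of characteristic zero to split $\E^{\o p}$ into its Schur summands; the regularity bookkeeping is entirely formal once Lemma \ref{lem:regularity-tensor} is granted.
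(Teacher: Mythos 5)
Your argument is correct and is essentially the paper's own proof: the paper likewise obtains the $(pe)$-regularity of $\E^{\o p}$ by iterating Lemma \ref{lem:regularity-tensor} and then extracts $\S^p\E$ and $\Lambda^p\E$ as direct summands of $\E^{\o p}$ in characteristic zero. Your write-up merely makes explicit the induction on $p$ and the (easy) fact that a direct summand of an $m$-regular sheaf is $m$-regular, which the paper leaves implicit.
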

	\begin{proof}
		Under one of those assumptions, the $p$-fold tensor power $\E^{\o p}$ is $(pe)$-regular by Lemma \ref{lem:regularity-tensor}. As we are working over the field of complex numbers, the $p$-th symmetric power $\S^p\E$ and $p$-th exterior power $\Lambda^p\E$ are direct summands of $\E^{\o p}.$ Hence they must be $(pe)$-regular as well.
	\end{proof}
	
	Another property of the polarization which allows the regularity to behave well under the tensor operation is the  \emph{Koszul} property. We refer to \cite{froberg2023koszul,totaro2013line, conca2013koszul} for more details.

	\begin{defi}
		A line bundle $L$ on a $n$-dimensional projective variety $X$ is \emph{$K$-Koszul} if $L$ is very ample and its section ring $R(L)=\bigoplus_mH^0(X,mL)$ determines a resolution of $\C$ as graded $R(L)$-module
		\[
		\cdots\longrightarrow M_K\longrightarrow\cdots\longrightarrow M_1\longrightarrow M_0\longrightarrow \C\longrightarrow0
		\] with $M_i=\bigoplus R(L)(-i)$ being a free $R(L)$-module generated in degree $i$ for every $i\le K.$ We say that $L$ is \emph{Koszul ample} if $L$ is $2n$-Koszul. We say that $L$ is \emph{Koszul} if it is $K$-Koszul for all $K.$
	\end{defi}
	
	\begin{ex}\label{ex:koszul}
		Examples of embedded $n$-dimensional smooth projective varieties $X\subset\P^N$ whose hyperplane section $H=\O_X(1)$ is Koszul are: smooth complete intersections of type $(2,2,\dots,2)$ \cite[§3.1]{froberg2023koszul}, anticanonically embedded Del Pezzo surfaces of degree $d\ge4$ \cite[Remark 3.22]{bangere2024koszul}, canonically embedded curves which are neither hyperelliptic nor trigonal nor plane quintics \cite{pareschi1997canonical}, embedded homogeneous varieties $X=G/P$ with $G$ being a simply connected semisimple algebraic group and $P$ a parabolic subgroup of $G$ \cite{ravi1995coordinate}, abelian varieties such that $H=L^{\o p}$ for some ample line bundle $L$ on $X$ and $p\ge4$ \cite[Theorem 1]{kempf1989projective}, embedded varieties such that $H=B^{\o k}$ for any $k>\reg_B(B)$ with $B$ very ample on $X$ \cite[Theorem 3.3]{hanumanthu2010some}, embedded varieties with trivial canonical bundle such that $H=A^{\o (n+1)}$ for a very ample line bundle $A$ on $X$ \cite[Theorem B]{pareschi1993koszul}.
	\end{ex}
	
	\begin{prop}\label{prop:multiplication-map}
		Let $\E$ and $\F$ be vector bundles on $X$ that are respectively $e$-regular and $f$-regular. Then:
		\begin{itemize}
			\item[(1)] $\E\o\F$ is $(e+f)$-regular if $B$ is Koszul ample.
			\item[(2)] $H^0(X,\E(eB))\o H^0(X,\F(fB))\to H^0(X,(\E\o\F)((e+f)B))$ is surjective if $B$ is $3n$-Koszul.
		\end{itemize}
	\end{prop}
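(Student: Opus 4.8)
The plan is to run the argument of Corollary~\ref{cor:regularity-tensor} with Arapura's resolution~(\ref{eq:resolution}) (which jumps by $M$ at each step) replaced by an honest \emph{linear} resolution of $\F$, namely one of the form
\[
\cdots\longrightarrow\O_X(-(f+j)B)^{\oplus b_j}\longrightarrow\cdots\longrightarrow\O_X(-(f+1)B)^{\oplus b_1}\longrightarrow\O_X(-fB)^{\oplus b_0}\longrightarrow\F\longrightarrow0,
\]
and it is precisely the Koszul hypothesis on $B$ that produces such a resolution, at least through position $n-1$ for part~(1) and through position $n$ for part~(2). Granting this, tensor the resolution by $\E$: it stays exact since $\E$ is locally free, and exactly as in Corollary~\ref{cor:regularity-tensor} the $j$-th term $\E(-(f+j)B)^{\oplus b_j}$ is $(e+f+j)$-regular because for $i>0$
\[
H^i\bigl(X,\E(-(f+j)B)\bigl((e+f+j-i)B\bigr)\bigr)^{\oplus b_j}\cong H^i\bigl(X,\E((e-i)B)\bigr)^{\oplus b_j}=0
\]
by the $e$-regularity of $\E$. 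The first ``conversely'' statement in Lemma~\ref{lem:m-regular-resolution}, applied with $q_j=e+f+j$ for $0\le j\le n-1$, then gives that $\E\o\F$ is $\max_{0\le j\le n-1}\{e+f+j-j\}=(e+f)$-regular, which is~(1); and the second, applied with $q_j=e+f+j$ for $0\le j\le n$, gives that
\[
H^0\bigl(X,\E(-fB)^{\oplus b_0}((e+f)B)\bigr)\longrightarrow H^0\bigl(X,(\E\o\F)((e+f)B)\bigr)
\]
is surjective. As $b_0=h^0(X,\F(fB))$ and the augmentation $\O_X(-fB)^{\oplus b_0}\to\F$ is the evaluation map composed with a choice of basis of $H^0(X,\F(fB))$, this is precisely the surjectivity of $\mu$, which is~(2).

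It remains to produce the linear resolution. Since $B$ is Koszul ample in either case (a $3n$-Koszul line bundle is in particular $2n$-Koszul), it is very ample and normally generated — indeed $R(B)=\bigoplus_mH^0(X,mB)=:S$ is then generated in degree one — so the embedding $\iota\colon X\hookrightarrow\P^N=\P\bigl(H^0(X,B)^\vee\bigr)$ realizes $R_X=S$ as a standard graded Koszul $\C$-algebra with $\mathrm{Proj}(S)=X$, a quotient of $P:=\C[x_0,\dots,x_N]$. Given an $f$-regular vector bundle $\F$, put $N:=\bigoplus_{m\ge f}H^0(X,\F(mB))$, a finitely generated graded $S$-module; since $\iota_*\F$ is $f$-regular on $\P^N$, Mumford's theorem shows $N$ is generated in degree $f$. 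A standard local-cohomology computation over $P$ gives $H^0_\m(N)=0$ (as $\F$ is torsion-free), $H^1_\m(N)=\bigoplus_{m<f}H^0(X,\F(mB))$ (concentrated in degrees $<f$), and $H^i_\m(N)\cong\bigoplus_mH^{i-1}(X,\F(mB))$ for $i\ge2$ (vanishing in degrees $\ge f-i+1$ by $f$-regularity); hence $\reg_P(N)=f$, so $N$ has a linear free resolution over $P$. Now pass to the Koszul quotient $S$: over a Koszul algebra regularity does not increase when the polynomial ring is replaced by $S$, and in its finite-order form this costs only a bounded number of homological steps (see \cite{conca2013koszul,froberg2023koszul}); concretely, if $B$ is Koszul ample ($2n$-Koszul) the minimal $S$-free resolution of $N$ is linear through position $n-1$, and if $B$ is $3n$-Koszul it is linear through position $n$. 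Sheafifying over $X=\mathrm{Proj}(S)$ — sheafification is exact, $\widetilde N=\F$ and $\widetilde{S(-j)}=\O_X(-jB)$ — produces the linear resolution of $\F$ in the required range.

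The one delicate point is the change-of-rings estimate just invoked: quantifying how many terms of the Koszul property of $S$ are needed to force the required number of linear steps in the $S$-free resolution of $N$, knowing only that its $P$-resolution is linear. Equivalently, this is the vanishing of the Koszul cohomology groups $K_{i,q}(\F,B)$ with $q\ge2$ in a suitable range of $i$, deduced from $K_{i,q}(\O_X,B)=0$ ($q\ge2$, $i$ in that range) together with the $f$-regularity of $\F$; it is this bookkeeping that yields the thresholds $2n$ and $3n$. Everything else is the routine adaptation of Corollary~\ref{cor:regularity-tensor} carried out above.
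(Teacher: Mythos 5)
Your surrounding skeleton is fine: once you have a resolution of $\F$ by sums of line bundles $\O_X(-(f+j)B)$ through homological position $n-1$ (resp.\ $n$), tensoring by the locally free $\E$, checking that the $j$-th term is $(e+f+j)$-regular, and invoking the second half of Lemma~\ref{lem:m-regular-resolution} does give (1) and (2) exactly as in Corollary~\ref{cor:regularity-tensor}, and your identification of the resulting surjection with the multiplication map via $b_0=h^0(X,\F(fB))$ is correct. Note, however, that the paper does not argue this way at all: it disposes of the whole Proposition by citing \cite[Theorem 6.8 \& Lemma 6.9 (First Version)]{raychaudhury2022positivity}, so the content you are trying to supply is precisely the content of those cited results.

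And that is where your proposal has a genuine gap. The entire nontrivial point is the quantitative change-of-rings statement you defer to the last paragraph: that if $B$ is $2n$-Koszul (resp.\ $3n$-Koszul), then the section module $N=\bigoplus_{m\ge f}H^0(X,\F(mB))$, which you correctly show has $\reg_P(N)=f$ over the polynomial ring $P$, has a \emph{minimal $S$-free resolution that is linear through homological position $n-1$} (resp.\ $n$) over the coordinate ring $S=R(B)$. You assert the thresholds $2n$ and $3n$ and point vaguely at \cite{conca2013koszul,froberg2023koszul}, but those surveys only give the change-of-rings bound $\reg_S\le\reg_P$ under the \emph{full} Koszul hypothesis; extracting a finite-order version, with an explicit count of how many linear steps in the Koszul resolution of $\C$ over $S$ are needed to force a given number of linear steps in the resolution of $N$ (e.g.\ via the change-of-rings spectral sequence or a Koszul-cohomology bookkeeping), is exactly the work done in Raychaudhury's Theorem~6.8 and Lemma~6.9, and it is what justifies the specific constants $2n$ and $3n$ appearing in the statement. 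As written, your proof establishes the routine part and leaves the decisive estimate unproved, so it does not yet constitute a proof of the Proposition.
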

	\begin{proof} See \cite[Theorem 6.8 \& Lemma 6.9 (First Version)]{raychaudhury2022positivity}.
	\end{proof}
	
	\section{Projective normality of vector bundles}
	After having recalled the definition of projective normality property for line bundles, we extend these notions to higher rank vector bundles in the obvious way. 
	
	\begin{defi} 
		A line bundle $L$ on a projective variety $X$ is said $k$-normal for some $k\ge1$ if the natural map $$\S^kH^0(X,L)\to H^0(X,kL)$$ is surjective. We say $L$ is \emph{normally generated} if it is ample and $k$-normal for all $k\ge1.$
		
		An embedded projective variety $X\subset\P^N$ is said $m$-normal for some $m\ge1$ if the restriction map $$H^0(\P^N,\OPN(m))\to H^0(X,\O_X(m))$$ is surjective, or equivalently if $H^1(\P^N,\I_{X/\P^N}(m))=0.$ A $1$-normal variety is said \emph{linearly normal.} The variety $X\subset\P^N$ is \emph{projectively normal} if it is $m$-normal for all $m\ge1.$
	\end{defi}
	
	The following observation is very well-known.
	
	\begin{rmk}\label{rmk:proj-norm}
		Let $L$ be an ample line bundle on a projective variety $X.$ Then the following are equivalent:
		\begin{enumerate}[(i)]
			\item $L$ is normally generated.
			\item $H^0(X,L)^{\o k}\to H^0(X,kL)$ is surjective for all $k\ge1.$
			\item $H^0(X,L)\o H^0(X,kL)\to H^0(X,(k+1)L)$ is surjective for all $k\ge 0.$
			\item $L$ is very ample and embeds $X\hookrightarrow\P(H^0(X,L))$ as a projectively normal variety. 
		\end{enumerate}
	\end{rmk}
	
	Therefore a normally generated line bundle $L$ on a projective variety $X$ induces an embedding $\phi_L\colon X\hookrightarrow\P^N$ as a projectively normal scheme. In the following we are going to identify normal generation (of $L$) and projective normality (of $\phi_L(X)\subset\P^N$). Moreover in this case the coordinate ring $R_X$ and the section ring $R(L)$ coincides.
	
	The natural generalization of these notions to vector bundles is asking them to hold for the tautological bundle.
	
	\begin{defi}
		We say that a vector bundle $\E$ on a projective variety is \emph{projectively normal} (resp. \emph{$k$-normal} for some $k\ge1$) if $\OPE(1)$ is normally generated (resp. $k$-normal) on $\P(\E).$ 
	\end{defi}
	
	\begin{rmk}\label{rmk:cohomology-serre}
		Given a vector bundle $\E$ of rank $r$ on a projective variety $X,$ by \cite[Exercises III.8.1-III.8.4]{hartshorne2013algebraic}  there are isomorphisms $$H^i(\P(\E),\O_{\P(\E)}(k))\cong H^i(X,\S^k\E)$$ for every $i,k\ge 0.$ In particular, by Remark \ref{rmk:proj-norm}, $\E$ is $k$-normal for some $k\ge1$ (resp. projectively normal) if and only if $$\S^kH^0(X,\E)\to H^0(X,\S^{k}\E)\hspace{0.2cm} \mbox{or, equivalently,}\hspace{0.2cm} H^0(X,\E)^{\o k}\to H^0(X,\S^k\E)$$ is surjective (resp. is surjective for all $k\ge1$ and $\E$ is ample). 
		
		A stronger condition to get $k$-normality (resp. projective normality) of $\E$ is requiring that multiplication the map
		\[
		\mu_\E^k\colon H^0(X,\E)^{\o k}\to H^0(X,\E^{\o k})
		\] is surjective (resp. is surjective for all $k\ge1$ and $\E$ is ample).  Indeed, one has the commutative diagram
		\[\begin{tikzcd}
			&& {H^0(X,\E)^{\o k}} \\
			{H^0(X,\E^{\o k})} &&&& {H^0(X,\S^{k}\E)}
			\arrow[from=1-3, to=2-1, "\mu_\E^k"']
			\arrow[from=1-3, to=2-5]
			\arrow[from=2-1, to=2-5]
		\end{tikzcd}\]  for every $k\ge1.$ As the $h$-symmetric product $\S^h\E$ is a direct summand of the $h$-tensor power $\E^{\o h}$ for all $h\ge0,$ the horizontal map is always surjective. Thus the conclusion immediately follows. 
	\end{rmk}
	
	We are going to use this remark with no further mention.
	
	\begin{defi}
		A vector bundle $\E$ on a projective variety $X$ is  \emph{strongly $k$-normal} if 
		\[
		\mu_\E^k\colon H^0(X,\E)^{\o k}\to H^0(X,\E^{\o k})
		\] is surjective. When $k=2,$ we will write $\mu_\E=\mu_\E^2.$
	\end{defi}
	
	\begin{rmk}\label{rmk:direct-sum}
		Let $\E\cong\F^{\oplus s}$ for a vector bundle $\F$ on a projective variety $X$ and for some $s\ge1.$ Then $\E$ is strongly $2$-normal if and only if $\F$ is strongly $2$-normal.
		
		This follows by observing that $$\mu_\E\colon H^0(X,\E)\o H^0(X,\E)\cong \left(H^0(X,\F)\o H^0(X,\F)\right)^{\oplus s^2}\to H^0(X,\F\o \F)^{\oplus s^2}\cong H^0(X,\E\o\E)$$	is surjective if and only if each $\mu_\F\colon H^0(X,\F)\o H^0(X,\F)\to H^0(X,\F\o\F)$ is surjective.
	\end{rmk}
	
	\begin{prop}\label{prop:proj-norm-0-reg}
		Let $X\subset\P^N$ be a projective variety of dimension $n\ge1$ such that $\O_X(1)$ is $3n$-Koszul. Then every $0$-regular vector bundle on $X$ is strongly $k$-normal for all $k\ge1.$ In particular, all ample $0$-regular vector bundles are projectively normal.
	\end{prop}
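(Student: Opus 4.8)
The plan is to prove, by induction on $k\ge1$, that for every $0$-regular vector bundle $\E$ on $X$ the iterated multiplication map $\mu_\E^k\colon H^0(X,\E)^{\o k}\to H^0(X,\E^{\o k})$ is surjective, i.e. that $\E$ is strongly $k$-normal for all $k$. The last sentence of the statement then follows immediately from Remark \ref{rmk:cohomology-serre}: an ample vector bundle that is strongly $k$-normal for every $k\ge1$ is $k$-normal for every $k\ge1$, hence $\OPE(1)$ is normally generated, i.e. $\E$ is projectively normal.

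First I would record the auxiliary fact that $\E^{\o j}$ is $0$-regular for every $j\ge1$. This is an easy induction on $j$: the case $j=1$ is the hypothesis, and if $\E^{\o j}$ is $0$-regular then $\E^{\o(j+1)}=\E^{\o j}\o\E$ is $(0+0)$-regular by Proposition \ref{prop:multiplication-map}(1), since $\O_X(1)$ being $3n$-Koszul is in particular $2n$-Koszul, that is, Koszul ample. Note that here one uses that being $K$-Koszul is monotone in $K$, which is clear from the definition (the condition on $M_i$ is imposed for all $i\le K$).

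For the inductive step of the main claim (with $k\ge2$), I would factor $\mu_\E^k$ as the composition
\[
H^0(X,\E)^{\o(k-1)}\o H^0(X,\E)\xrightarrow{\ \mu_\E^{k-1}\o\,\mathrm{id}\ }H^0(X,\E^{\o(k-1)})\o H^0(X,\E)\xrightarrow{\ \nu\ }H^0(X,\E^{\o k}),
\]
where $\nu$ is the multiplication map attached to the decomposition $\E^{\o k}=\E^{\o(k-1)}\o\E$. The left-hand arrow is surjective by the inductive hypothesis, since tensoring a surjection of vector spaces with the fixed space $H^0(X,\E)$ preserves surjectivity. For the right-hand arrow I would apply Proposition \ref{prop:multiplication-map}(2) to the two $0$-regular vector bundles $\E^{\o(k-1)}$ and $\E$ with $e=f=0$: because $\O_X(1)$ is $3n$-Koszul, $\nu$ is surjective. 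Hence $\mu_\E^k$, a composition of surjections, is surjective, closing the induction.

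I do not expect any genuine obstacle: the whole argument is a bootstrapping of the two parts of Proposition \ref{prop:multiplication-map}, and the only point that needs a moment's attention is that the single hypothesis ``$3n$-Koszul'' must do double duty — supplying Koszul ampleness so that all tensor powers $\E^{\o j}$ remain $0$-regular, and supplying surjectivity of the multiplication maps between $0$-regular bundles so that the induction propagates — which it does, precisely because the Koszul property is inherited by smaller exponents.
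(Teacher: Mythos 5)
Your argument is correct and is essentially the proof in the paper: the same induction on $k$, factoring $\mu_\E^k$ through $H^0(X,\E^{\o(k-1)})\o H^0(X,\E)\to H^0(X,\E^{\o k})$ and invoking Proposition \ref{prop:multiplication-map} for the surjectivity of the latter map, with the ampleness case concluded via Remark \ref{rmk:cohomology-serre}. Your only addition is to make explicit the (correct) observation that $3n$-Koszul implies Koszul ample, so that $\E^{\o(k-1)}$ stays $0$-regular — a point the paper leaves implicit.
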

	
	This Proposition applies for instance to all embedded varieties in Example \ref{ex:koszul}.
	
	\begin{proof}
		Let $\E$ be a $0$-regular vector bundle on $X\subset\P^N$ and proceed by induction on $k\ge1.$ As the base case $k=1$ is trivial, suppose $k>1$ and consider the commutative diagram 
		\[\begin{tikzcd}
			& {H^0(X,\E)^{\o k}} \\
			\\
			{H^0(X,\E)\o H^0(X,\E^{\o(k-1)})} &&& {H^0(X,\E^{\o k})}.
			\arrow["{\mr{id}\o\mu_{\E}^{k-1}}"', from=1-2, to=3-1]
			\arrow["{\mu_\E^k}", from=1-2, to=3-4]
			\arrow["{m^k_\E}"', from=3-1, to=3-4]
		\end{tikzcd}\]		
		The map $\mr{id}\o\mu_\E^{k-1}$ is surjective by the inductive hypothesis and $m_\E^k$ is surjective by Proposition \ref{prop:multiplication-map}. Thus $\mu_\E^k=m_\E^k\circ(\mr{id}\o\mu_\E^{k-1})$ is surjective as required.
	\end{proof}
	
	Strongly $2$-normality on $0$-regular vector bundles on low dimensional varieties directly implies projective normality.
	
	\begin{rmk}\label{rmk:strong-k-normality-curve} 
		Let $(X,B)$ be either an irreducible curve with a globally generated ample line bundle or an irreducible surface with a very ample line bundle.	Then a strongly $2$-normal $0$-regular vector bundle for $(X,B)$ is automatically strongly $k$-normal for all $k\ge2.$ In particular, all ample $0$-regular and strongly $2$-normal vector bundles for $(X,B)$ are projectively normal.
		
		Indeed, let $\E$ be a strongly $2$-normal $0$-regular vector bundle for  $(X,B)$ and let $M_\E$ be its syzygy bundle. Tensoring the syzygy exact sequence of $\E$ through by $\E,$ we immediately see that the strongly $2$-normality implies $H^1(X,M_\E\o\E)=0.$ If $\dim X=1,$ this amounts to say that $M_\E\o\E$ is $1$-regular. If $X$ is a surface, observing that also $\E\o\E$ is $0$-regular (Corollary \ref{cor:regular-symmetric}), the exact sequence
		\[
		0=H^1(X,\E\o\E(-B))\longrightarrow H^2(X,M_\E\o\E(-B))\longrightarrow H^0(X,\E)\o H^2(X,\E(-B))=0
		\]
		tells that $M_\E\o\E$ is $1$-regular also in this case. Then, by Lemma \ref{lem:regularity-tensor}, $M_\E\o\E^{\o j}$ and $\E^{\o j}$ are respectively $1$-regular and $0$-regular for all $j\ge 1.$ Proceeding inductively on $k\ge 2,$ suppose $k>2$ and assume that $\mu_\E^h$  is surjective for all $1\le h\le k-1.$ Using the $1$-regularity of $M_\E\o\E^{\o(k-1)}$ we immediately get the surjectivity of $$H^0(X,\E)\o H^0(X,\E^{\o (k-1)})\to H^0(X,\E^{\o k}).$$ Since $H^0(X,\E)^{\o k}=H^0(X,\E)\o H^0(X,\E)^{\o(k-1)}\to H^0(X,\E)\o H^0(X,\E^
		{\o(k-1)})$ is onto by the inductive hypothesis, the assertion follows. 
	\end{rmk}
	
	The following result is well-know, see for instance \cite[Fact 1.7]{miro1994cohomological} or \cite[p. 1014]{tripathi2016splitting}. For more details we refer to \cite{buchsbaum1975generic,eisenbud2013commutative, weyman2003cohomology}. 
	
	\begin{lemma}\label{lem:fact}
		Let $0\to\E\to\F\to\g\to0$ be an exact sequence of vector bundles on a variety $X.$ Then we have the following exact sequences of vector bundles for every $k\ge1$:
		\begin{align}
			0&\to \L^k\E\to\L^k\F\to\L^{k-1}\F\o\g\to\cdots\to\F\o\S^{k-1}\g\to\S^{k}\g\to0, \label{eq:fact1}\\
			0&\to\S^k\E\to\S^{k-1}\E\o\F\to\cdots\to\E\o\L^{k-1}\F\to\L^k\F\to\L^k\g\to0, \label{eq:fact3}
		\end{align}
	\end{lemma}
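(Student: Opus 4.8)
The plan is to reduce Lemma \ref{lem:fact} to a stalkwise, purely module-theoretic assertion, and then to recognize the resulting complexes as direct sums of elementary Koszul strands whose acyclicity is classical. First I would localize: exactness of a complex of coherent sheaves may be checked on stalks, so fix $x\in X$ and work over $R:=\O_{X,x}$. Since $\g$ is locally free, the sequence $0\to\E_x\to\F_x\to\g_x\to0$ of finitely generated free $R$-modules splits, so I may assume $\F\cong\E\oplus\g$ over $R$. Over any commutative ring one has the canonical decompositions $\L^k(\E\oplus\g)\cong\bigoplus_{a+b=k}\L^a\E\o\L^b\g$ and $\S^k(\E\oplus\g)\cong\bigoplus_{a+b=k}\S^a\E\o\S^b\g$, and the point is that they are compatible with the natural Koszul-type differentials appearing in (\ref{eq:fact1}) and (\ref{eq:fact3}).

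Concretely, the differential of (\ref{eq:fact1}) deletes one exterior factor of $\F$ and multiplies its image in $\g$ into the symmetric tensor; when $\F=\E\oplus\g$ this image vanishes on the factors lying in $\E$, so the differential preserves the exterior $\E$-degree $a$, and (\ref{eq:fact1}) with its leftmost term $\L^k\E$ removed splits as $\bigoplus_{a=0}^{k}\L^a\E\o C_{k-a}$, where $C_m$ denotes the complex $0\to\L^m\g\to\L^{m-1}\g\o\S^1\g\to\cdots\to\S^m\g\to0$; the strand $a=k$ consists of the single object $\L^k\E$, which together with the removed leftmost term is the trivially exact $0\to\L^k\E\xrightarrow{\sim}\L^k\E\to0$. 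Dually, the differential of (\ref{eq:fact3}) moves a symmetric $\E$-factor into the exterior $\F$-tensor; since that factor lies in $\E$, its $\g$-component vanishes, so (\ref{eq:fact3}) with its rightmost term $\L^k\g$ removed splits as $\bigoplus_{q=0}^{k}D_{k-q}\o\L^q\g$, where $D_m\colon 0\to\S^m\E\to\S^{m-1}\E\o\L^1\E\to\cdots\to\L^m\E\to0$, the strand $q=k$ being the copy of $\L^k\g$ matched isomorphically with the removed last term.

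Everything therefore reduces to the exactness of $C_m$ and $D_m$ for $m\ge1$. The complex $C_m$ is precisely the internal-degree-$m$ component of the Koszul complex on the generators of the polynomial ring $\S^\bullet\g$ over $R$ (generators in degree $1$); since those generators form a regular sequence this Koszul complex resolves $R$ and is acyclic in every positive internal degree, so $C_m$ is exact for $m\ge1$. The complex $D_m$ is the analogous symmetric-to-exterior strand; it is exact for $m\ge1$ by the dual argument (dualizing $D_m$ for $\E$ yields $C_m$ for $\E^\ast$, and dualizing is exact on locally free sheaves), or one may simply invoke \cite{eisenbud2013commutative,weyman2003cohomology}. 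Assembling the pieces, at each stalk both (\ref{eq:fact1}) and (\ref{eq:fact3}) are direct sums of exact complexes, hence exact, which is the claim.

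The only genuine input here is the acyclicity of the elementary strands $C_m$ and $D_m$ in positive degree, i.e. the acyclicity of the Koszul complex in positive internal degree; everything else is bookkeeping. The one spot requiring a little care is the matching of the two "extra" end terms $\L^k\E$ and $\L^k\g$ with the degenerate ($m=0$) strands of the decomposition, which is why I would split the truncated complexes first and then splice these terms back in; it is also worth writing out the Koszul differentials explicitly once, to confirm that under the local splitting $\F=\E\oplus\g$ they really are block-diagonal with respect to the grading by $a$ (respectively $q$).
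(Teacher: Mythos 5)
Your argument is correct. Note that the paper itself offers no proof of Lemma \ref{lem:fact}: it records the statement as well known and points to \cite{miro1994cohomological}, \cite{tripathi2016splitting} and to \cite{buchsbaum1975generic,eisenbud2013commutative,weyman2003cohomology}, so your write-up is a self-contained replacement for those citations rather than a variant of an argument in the text. What you do --- check exactness on stalks, split the sequence locally using that $\g$ is locally free, observe that the natural differentials only involve the projection $\F\to\g$ (resp.\ the inclusion $\E\to\F$) and are therefore block-diagonal for the grading by the exterior $\E$-degree (resp.\ the exterior $\g$-degree), and reduce everything to the acyclicity of the elementary strands $C_m$ and $D_m$, splicing the end terms $\L^k\E$ and $\L^k\g$ back in as the degenerate strands --- is precisely the standard mechanism underlying the quoted references, so there is no mathematical divergence; the gain is that the lemma becomes self-contained and elementary, the cost is that one must write out the natural maps and verify the block-diagonality, which you correctly flag as the only point needing explicit computation. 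One refinement is worth adding: the exactness of $D_m$, hence of (\ref{eq:fact3}) with symmetric (rather than divided) powers on the left, genuinely uses characteristic $0$; it enters exactly in your identification $(\S^j\E)^\ast\cong\S^j(\E^\ast)$ when you dualize $D_m$ into $C_m$ for $\E^\ast$. This is harmless here because the paper works over $\C$, but it should be said explicitly, since (\ref{eq:fact1}) by contrast is exact over any base.
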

	
	This lemma shows that we can obtain the $k$-normality through some cohomology vanishings.
	
	\begin{ex}\label{ex:2-normality}
		Let $X$ be a smooth regular projective variety and let $\E$ be a globally generated vector bundle on $X$ such that $H^1(X,\E)=0.$ Then $\E$ is $2$-normal if $H^2(X,\L^2M_\E)=0.$ The converse holds if $H^2(X,\O_X)=0.$
		
		To see this, consider (\ref{eq:fact1}) with $k=2$ for the syzygy exact sequence of $\E$ and then split it in the two exact sequences
		\[
		0\to K\to H^0(X,\E)\o\E\to\S^2\E\to0\ \text{and}\ 0\to\L^2M_\E\to\L^2H^0(X,\E)\o\O_X\to K\to0.
		\] Taking the cohomology of the first one, we immediately see that $H^0(X,\E)\o\ H^0(X,\E)\to H^0(X,\S^2\E)$ is surjective if and only if $H^1(X,K)=0.$ The cohomology of the second one yields the exact sequence $$0=\L^2H^0(X,\E)\o H^1(X,\O_X)\to H^1(X,K)\to H^2(X,\L^2M_\E)\to \L^2H^0(X,\E)\o H^2(X,\O_X).$$ The claim is now obvious.
	\end{ex}
	
	\begin{notat}\label{not:non-special-vector-bundle}
		We say that a coherent sheaf on a variety is \emph{non-special} if its first cohomology group is $0.$ It is \emph{special} otherwise.
	\end{notat}
	
	We observe that projective normality is an open property in proper flat families of non-special vector bundles. This will allow us to consider open subsets of projectively normal Ulrich bundles in the moduli space of vector bundles.
	
	\begin{lemma}\label{lem:open-condition}
		Let $f\colon Y\to S$ be a proper morphism over a Noetherian scheme $S$ and let $\F$ be a coherent sheaf on $Y$ that is flat over $S.$ Suppose there is $0\in S$ such that $\F_0=\F_{|Y_0}$ is locally free, semistable, non-special, ample, globally generated and $2$-normal (resp. strongly $2$-normal). Then there exists an open neighborhood $U\subset S$ of $0$ such that $\F_s=\F_{|Y_s}$ satisfies all the above properties for every $s\in U.$ 
	\end{lemma}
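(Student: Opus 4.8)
The plan is to prove that each of the six listed properties holds on a suitable open neighbourhood of $0$ and then to take $U$ to be the intersection of these finitely many neighbourhoods; throughout I would freely shrink $S$ around $0$. I would start with local freeness: since $\F$ is $S$-flat and $\F_0$ is locally free, $\F$ is locally free in a neighbourhood of $Y_0$, and since $f$ is proper (hence closed) I can shrink $S$ so that $\F$ is a vector bundle on all of $Y$. I would then record a fact used repeatedly below: as $\F$ has positive rank and is $S$-flat, being locally free on $Y$ forces $Y$ itself to be flat over $S$. Next, for non-speciality, upper semicontinuity of $s\mapsto h^1(Y_s,\F_s)$ (\cite[Theorem III.12.8]{hartshorne2013algebraic}) together with $H^1(Y_0,\F_0)=0$ lets me shrink $S$ so that $H^1(Y_s,\F_s)=0$ for all $s$; cohomology and base change (\cite[Theorem III.12.11]{hartshorne2013algebraic}) then gives that $f_*\F$ is locally free and its formation commutes with base change, so that $h^0(Y_s,\F_s)$ is constant and $f_*\F\otimes k(s)\cong H^0(Y_s,\F_s)$.

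For global generation I would look at the relative evaluation $\varepsilon\colon f^*f_*\F\to\F$: by the previous step its restriction to $Y_0$ is the evaluation map of $\F_0$, which is surjective, so the cokernel of $\varepsilon$ is supported away from $Y_0$, and by properness it vanishes after shrinking $S$; thus $\varepsilon$ is surjective on $Y$ and restricts on each fibre to the evaluation map of $\F_s$, which is hence globally generated. Its kernel $\mathcal{M}:=\ker\varepsilon$ is a vector bundle on $Y$, and — since a short exact sequence of vector bundles is locally split and therefore stays exact under arbitrary base change — one has $\mathcal{M}|_{Y_s}\cong M_{\F_s}$ for every $s$. Ampleness and semistability I would handle by invoking openness theorems: $\F_s$ is ample precisely when $\O_{\P(\F_s)}(1)$ is ample on the fibre $\P(\F_s)$ of the proper $S$-scheme $\P(\F)\to S$, so openness of amplitude in proper families (EGA IV, 9.6.4; see also \cite{lazarsfeld2017positivity}) lets me shrink $S$ accordingly, and openness of (semi)stability in flat families (Maruyama; see also Huybrechts--Lehn) lets me shrink $S$ so that $\F_s$ stays semistable with respect to the given polarization.

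It remains to propagate $2$-normality (resp.\ strong $2$-normality), and this is where the real difficulty lies: one cannot simply use semicontinuity of $h^1(Y_s,\S^2\F_s)$ or a rank count for the multiplication map, because no regularity hypothesis is available here and so base change for $f_*\S^2\F$ need not hold. The idea is to repackage the normality condition as the vanishing of $H^1$ of a vector bundle that does extend to a flat family on $Y$, built only out of $H^0$ and $H^1$ of $\F$, which are under control by the steps above. Concretely, tensoring $0\to\mathcal{M}\to f^*f_*\F\to\F\to 0$ by $\F$ and restricting to $Y_s$, and using $H^1(Y_s,\F_s)=0$, shows that $\F_s$ is strongly $2$-normal if and only if $H^1(Y_s,M_{\F_s}\o\F_s)=0$; and the composite $f^*f_*\F\o_{\O_Y}\F\to\F\o\F\to\S^2\F$ is fibrewise surjective, hence (shrinking $S$ as for $\varepsilon$) surjective on $Y$, so that, writing $\mathcal{K}$ for its kernel, the sequence $(\ref{eq:fact1})$ with $k=2$ gives on each fibre $0\to\mathcal{K}|_{Y_s}\to H^0(Y_s,\F_s)\o\F_s\to\S^2\F_s\to 0$, and hence $\F_s$ is $2$-normal if and only if $H^1(Y_s,\mathcal{K}|_{Y_s})=0$. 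Since $\mathcal{M}\o\F$ and $\mathcal{K}$ are vector bundles on $Y$, hence $S$-flat because $Y/S$ is flat, Grauert's semicontinuity theorem applies to them; as the relevant $H^1$ is zero over $0$, one last shrinking of $S$ gives the vanishing over all of $U$.

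In summary, the main obstacle is this final step: the naive openness arguments for $k$-normality require base change for symmetric powers of $\F$, which is not granted, so instead one must phrase normality as an $H^1$-vanishing of the relative (exterior powers of the) syzygy bundle, to which plain semicontinuity applies. A secondary, purely technical point is to check that $\mathcal{M}$, $\mathcal{M}\o\F$ and $\mathcal{K}$ restrict to the expected bundles on the fibres $Y_s$; this is uniform, following from the observation that short exact sequences of vector bundles are locally split and therefore survive any base change. Intersecting the finitely many open neighbourhoods of $0$ produced along the way then yields the desired $U$.
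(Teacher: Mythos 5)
Your proposal is correct and follows essentially the same strategy as the paper: shrink $S$ step by step for local freeness, semistability, non-speciality, ampleness and local freeness of $f_\ast\F$, deduce global generation from the relative evaluation map $f^\ast f_\ast\F\to\F$ together with properness and cohomology-and-base-change, and then convert the normality condition into the vanishing of $H^1$ of a relative kernel bundle that is flat over $S$, to which semicontinuity applies. The only real divergence is the plain $2$-normality case: the paper passes to $\P(\F)\to S$ with the line bundle $\O_{\P(\F)}(1)$ and quotes the known openness of surjectivity of the multiplication map for line bundles in families \cite[Lemma 1.3]{martens1985normal}, whereas you treat it directly via the kernel of $f^\ast f_\ast\F\o\F\to\S^2\F$, exactly parallel to the strong case (which is also the paper's argument there, with $\K=M_\F\o\F$); your route is self-contained, the paper's is shorter. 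Your explicit observation that local freeness of $\F$ of positive rank plus $S$-flatness forces $\O_Y$ (hence the kernel bundles) to be $S$-flat is a welcome justification of a point the paper handles only implicitly through \cite[Exercise 2.25]{atiyah2018introduction}, and your identifications of $\mc{M}|_{Y_s}$, $(\mc{M}\o\F)|_{Y_s}$ and $\mc{K}|_{Y_s}$ via local splitness of short exact sequences of vector bundles are sound.
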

	
	\begin{proof}
		The assertion is local on the target, so we assume $S$ is affine. Thanks to \cite[Lemma 2.1.8 \& Proposition 2.3.1]{huybrechts2010geometry} and to \cite[Corollary 23.144]{gortz2023algebraic}, up to shrink $S,$ 
		we can suppose that $\F_s$ is locally free, semistable and non-special for every $s\in S.$ In particular, $\F$ is locally free on $Y$ by \cite[Lemma 2.1.7]{huybrechts2010geometry}. Shrinking again, we can assume also  $f_\ast\F$ is locally free as well \cite[Corollary 23.144]{gortz2023algebraic}. Up to replace $S$ with a suitable open neighborhood of $0,$ by \cite[Proposition 6.1.9]{lazarsfeld2017positivity2} we can  suppose $\F_s$ is ample for every $s\in S.$ Note that the function $s\mapsto h^1(Y_s,\F_s)=0$ is constant. 
		
		Letting $\rho\colon f^\ast f_\ast\F\to\F$ be the natural morphism, we can see that $\F_s$ is not globally generated if and only if $\rho_s=\rho_{|Y_s}\colon H^0(Y,\F)\o\O_{Y_s}\to \F_s$ is not surjective. To prove this, observe that there is a factorization $$\rho_s=e_s\circ (r_s\o\mr{id_{\O_{Y_s}}})$$ where $e_s\colon H^0(Y_s,\F_s)\o\O_{Y_s}\to\F_s$ is the evaluation map and $r_s\colon H^0(Y,\F)\to H^0(Y_s,\F_s)$ is the restriction map. If we prove that $r_s$ is surjective, then the claim will be clear. In order to do this,  observe that, since $h^1(Y_s,\F_s)=0$ for all $s,$ \cite[Corollary 23.144]{gortz2023algebraic} provide the isomorphism $$(f_\ast\F)_{|\Spec(\C(s))}\cong (f_{|Y_s})_\ast(\F_s)$$ for every $s\in S.$ Since $S$ is affine, thanks to \cite[Proposition III.8.5 \& Proposition II.5.2(e)]{hartshorne2013algebraic} this yields  
		\begin{align*}
			\w{H^0(Y,\F)\o \C(s)}&\cong \w{H^0(S,f_\ast\F)\o \C(s)}\\
			&\cong (\w{H^0(S,f_\ast\F)})_{|\Spec(\C(s))}\\
			& \cong \w{H^0(\Spec(\C(s)),(f_{|Y_s})_\ast(\F_s))}\\
			& \cong \w{H^0(Y_s,\F_s)}.
		\end{align*}
		In virtue of the equivalence of categories between $\O_S$-modules over $S$ and $\G(S,\O_S)$-modules, we thus obtain the isomorphism $$H^0(Y,\F)\o \C(s)\cong H^0(Y_s,\F_s).$$ Since $S$ is affine, the locally free sheaf $f_\ast\F$ is globally generated. Therefore the restriction of the surjective evaluation map $H^0(S,f_\ast\F)\o\O_S\to f_\ast\F$ to $\Spec(\C(s))$ yields the surjective map
		\begin{equation*}
			r_s\colon H^0(Y,\F)\cong H^0(S,f_\ast\F)\to H^0(\Spec(\C(s)),(f_\ast\F)_{|\Spec(\C(s))}) \cong H^0(S,(f_{|Y_s})_\ast(\F_s))\cong H^0(Y_s,\F_s),
		\end{equation*} as desired.
		
		As $\F_0$ is generated by global sections, the coherence of $\mr{coker}(\rho)$ tells that there is an affine open neighborhood of $0$ such that $\rho$ is surjective on every fibre $Y_s.$ Hence, up to shrink $S,$ we suppose that $\F_s$ is globally generated for all $s,$ so that $\rho$ is globally surjective. 
		
		Now, $\P(\F_s)$ is a fibre of the composite proper morphism $\P(\F)\to Y\to  S$ and $\mc{L}=\O_{\P(\F)}(1)$ is a line bundle on $\P(\F)$ such that $\mc{L}_s=\O_{\P(\F_s)}(1)$ for every $s\in S.$ The surjectivity of the multiplication map $$H^0(\P(\F_0),\mc{L}_0)\o H^0(\P(\F_0),\mc{L}_0)\to H^0(\P(\F_0),\mc{L}_0\o\mc{L}_0)$$ is well-known to be an open condition by semicontinuity (see \cite[Proof of Lemma 1.3, lines 7-8]{martens1985normal}), thus the conclusion in case of the $2$-normality of $\F_0$ follows.
		
		As a recap, we have that $\F_s$ is locally free, semistable, non-special, ample and globally generated for all $s\in S.$ Assuming that $\F_0$ is strongly $2$-normal, we need to prove that there is a neighborhood $U\subset S$ of $0$ such that $\F_s$ is strongly $2$-normal for all $s\in U.$ To do this, consider the morphism $\mu\colon H^0(Y,\F)\o\F\to\F\o\F.$ We claim that $\mu$ is surjective on $Y.$ Indeed, as $f_\ast\F$ is globally generated, then so is $f^\ast f_\ast\F.$ Thanks to the surjectivity of $\rho,$ we deduce that $\F$ is globally generated as well. The assertion follows by tensoring the syzygy exact sequence of $\F$ through by $\F$ itself.  
		In conclusion, we have the exact sequence 
		\[
		\begin{tikzcd}
			0\rar&\K=M_\F\o\F\rar& H^0(Y,\F)\o\F\rar&\F\o\F\rar&0.
		\end{tikzcd}
		\]   
		Note also that $M_\F,$ hence $\K,$ is flat over $S$ by \cite[Exercise 2.25]{atiyah2018introduction}. 
		Hence the above sequence restricts to the short exact sequence
		\[
		\begin{tikzcd}
			0\rar&\K_s\rar& H^0(Y,\F)\o\F_s\arrow[rr, "\mu_s=\mu_{|Y_s}"]&&\F_s\o\F_s\rar&0
		\end{tikzcd}
		\] for each $s\in S.$  
		Taking the cohomology, we obtain the following commutative diagram {\small $$\begin{tikzcd}
				H^0(Y,\F)\o H^0(Y_s,\F_s) && H^0(Y_s,\F_s\o\F_s) & H^1(Y_s,\K_s) & 0 \\
				& H^0(Y_s,\F_s)\o H^0(Y_s,\F_s)
				\arrow[from=1-1, to=1-3, "\mu_s(Y_s)"]
				\arrow[from=1-3, to=1-4]
				\arrow[from=1-1, to=2-2, "r_s\o\mr{id}"']
				\arrow[from=2-2, to=1-3, "\mu_{\F_s}"']
				\arrow[from=1-4, to=1-5]
			\end{tikzcd}$$} 		where the row is exact. Since $r_s$ is surjective, we deduce that $\mu_s(Y_s)$ is surjective if and only if $\mu_{\F_s}$ is surjective. Now, $\F_0$ is strongly $2$-normal, and this forces $\mu_{\F_0},$ hence $\mu_0(Y_0),$ to be onto. The exactness of the cohomology sequence yields $H^1(Y_0,\K_0)=0.$ As $\K$ is flat over $S,$  the semicontinuity theorem applied to $s\mapsto h^1(Y_s,\K_s)$ provides an open neighborhood $U\subset S$ of $0$ where $h^1(Y_s,\K_s)=0$ for all $s\in U.$  We obtain the surjectivity of $\mu_s(Y_s),$ thus of $\mu_{\F_s},$ for all $s\in U$ as required.
	\end{proof}
	
	It's well known that a normal projective variety $X\subset\P^N$ is projectively normal if and only if $R_X$ is integrally closed (see for instance \cite[Exercise II.5.14]{hartshorne2013algebraic}). A related property on $R_X$ is being \emph{Cohen-Macaulay.}
	
	\begin{defi}
		An embedded projective variety $X\subset\P^N$ is said \emph{arithmetically Cohen-Macaulay} (\emph{aCM} for short) if the homogeneous coordinate ring $R_X$ is Cohen-Macaulay, i.e. $\dim R_X=\depth R_X.$ For a vector bundle $\E$ on a projective variety $Y,$ we say that $\P(\E)$ is \emph{aCM} if $\OPE(1)$ induces an embedding $\P(\E)\subset\P(H^0(Y,\E))$ which is aCM.
	\end{defi}
	
	As is well-known, aCM property is stronger than projective normality.
	
	\begin{rmk}\label{rmk:acm-variety}
		A projective variety $X\subset\P^N$ of positive dimension is aCM if and only if $H^1(\P^N,\I_{X/\P^N}(k))=0$ for all $k\ge0$ and $\O_X$  is an aCM sheaf with respect to $\O_X(1)$ if and only if $H^i(\P^N,\I_{X/\P^N}(k))=0$ for $1\le i\le \dim X$ and $k\in\Z$ \cite[Proposition 2.1.9]{costa2021ulrich}. In particular, an irreducible projective curve $C\subset\P^N$ is aCM if and only if it is projectively normal. 
	\end{rmk}
	
	Let's see how $k$-normality and aCM property behave with respect to hyperplane sections.
	
	\begin{lemma}\label{lem:proj-norm-hyperplane-section}
		Let $X\subset\P^N$ be a linearly normal projective variety of dimension $n\ge 2$ and let $Y= X\cap\P^{N-1}$ be a linearly normal irreducible hyperplane section. Then the following holds:
		\begin{itemize}
			\item[(i)] For $k>0,$ if the embedding $Y\subset\P^ {N-1}$ is $h$-normal for $2\le h\le k,$ then so is the embedding $X\subset\P^N.$ Conversely,  $Y\subset\P^{N-1}$ is $k$-normal for $k\ge 0$ if $X\subset\P^N$ is $k$-normal and $H^1(X,\O_X(k-1))=0.$
			\item[(ii)] If $X$ is aCM, then so is $Y.$ The converse holds if $X$ is locally Cohen-Macaulay.
		\end{itemize}
	\end{lemma}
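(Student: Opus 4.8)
For this lemma the whole game is played with one exact sequence on $\P^N$. Write $\P^{N-1}=\{\ell=0\}$. Since $X$ is a variety not contained in $\P^{N-1}$, the linear form $\ell$ is a nonzerodivisor on the torsion-free sheaf $\I_{X/\P^N}$ and $\mathrm{Tor}_1^{\O_{\P^N}}(\O_X,\O_{\P^{N-1}})=0$, which identifies $\I_{X/\P^N}\o\O_{\P^{N-1}}$ with $\I_{Y/\P^{N-1}}$; multiplying by $\ell$ therefore gives, for every $k\in\Z$, the exact sequence
\begin{equation}\tag{$\star$}
0\longrightarrow\I_{X/\P^N}(k-1)\longrightarrow\I_{X/\P^N}(k)\longrightarrow\I_{Y/\P^{N-1}}(k)\longrightarrow0.
\end{equation}
I shall use freely that $\dim X=n\ge 2$ forces $N\ge n+1\ge 3$, so that the structural sequences $0\to\I_Z(j)\to\O(j)\to\O_Z(j)\to0$ for $Z=X\subset\P^N$ and $Z=Y\subset\P^{N-1}$ identify $H^1(\I_X(j))$, $H^1(\I_{Y/\P^{N-1}}(j))$ with the cokernels of the restriction maps (so $j$-normality of the embedding is exactly the vanishing of this $H^1$) and give $H^2(\I_X(j))\cong H^1(X,\O_X(j))$; moreover $H^1(\I_X)=H^1(\I_{Y/\P^{N-1}})=0$ as $X,Y$ are connected, while $H^1(\I_X(1))=H^1(\I_{Y/\P^{N-1}}(1))=0$ are the two linear-normality assumptions.

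For (i), I prove the first implication by induction on $k\ge2$, the case $k=1$ being linear normality of $X$. The cohomology of $(\star)$ gives $H^1(\I_{X/\P^N}(k-1))\to H^1(\I_{X/\P^N}(k))\to H^1(\I_{Y/\P^{N-1}}(k))$: the left term vanishes by linear normality of $X$ if $k=2$ and by the inductive hypothesis — applicable because $Y$ is $(k-1)$-normal — if $k\ge3$, and the right term vanishes by $k$-normality of $Y$, so $X$ is $k$-normal. For the converse, $(\star)$ gives $H^1(\I_{X/\P^N}(k))\to H^1(\I_{Y/\P^{N-1}}(k))\to H^2(\I_{X/\P^N}(k-1))$; the first group vanishes by $k$-normality of $X$ (trivially if $k\le1$) and the third equals $H^1(X,\O_X(k-1))=0$ by hypothesis, so $Y$ is $k$-normal.

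For (ii), recall from Remark \ref{rmk:acm-variety} that ``$X$ is aCM'' means $H^i(\I_{X/\P^N}(k))=0$ for $1\le i\le n$, all $k$, and ``$Y$ is aCM'' means $H^j(\I_{Y/\P^{N-1}}(k))=0$ for $1\le j\le n-1$, all $k$. If $X$ is aCM then, for $1\le i\le n-1$, the cohomology of $(\star)$ traps $H^i(\I_{Y/\P^{N-1}}(k))$ between $H^i(\I_{X/\P^N}(k))=0$ and $H^{i+1}(\I_{X/\P^N}(k-1))=0$ (legitimate since $i+1\le n$), so $Y$ is aCM. Conversely, assume $Y$ is aCM and $X$ is locally Cohen--Macaulay. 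For $2\le i\le n$ the term $H^{i-1}(\I_{Y/\P^{N-1}}(k))$ preceding $H^i(\I_{X/\P^N}(k-1))$ in $(\star)$ vanishes, so $H^i(\I_{X/\P^N}(k-1))\hookrightarrow H^i(\I_{X/\P^N}(k))$; as the target is $0$ for $k\gg0$ by Serre's theorem, composing injections gives $H^i(\I_{X/\P^N}(k))=0$ for $2\le i\le n$ and all $k$. It remains to kill $H^1(\I_{X/\P^N}(k))$ for all $k$: being aCM, $Y$ is projectively normal, hence $h$-normal for every $h$, so the first implication of (i) makes $X$ $k$-normal for every $k$. Thus all the vanishings of Remark \ref{rmk:acm-variety} hold and $X$ is aCM.

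The only non-mechanical point is the converse of (ii): a bare dimension shift along $(\star)$ recovers the vanishing of $H^i(\I_{X/\P^N}(k))$ only for $i\ge2$, since in degree $i=1$ the sequence relates $H^1(\I_{X/\P^N}(k-1))$ to $H^0(\I_{Y/\P^{N-1}}(k))$, which the arithmetic Cohen--Macaulayness of $Y$ does not control; one must loop back through the normal-generation statement of (i). The assumption that $X$ be locally Cohen--Macaulay is unavoidable here — an arithmetically Cohen--Macaulay scheme is locally Cohen--Macaulay — and it is what guarantees that the hyperplane section $Y=X\cap\P^{N-1}$ is itself Cohen--Macaulay of pure codimension one, so that the cohomological description of arithmetic Cohen--Macaulayness in Remark \ref{rmk:acm-variety} stays in force for $Y$ and the reduction closes cleanly.
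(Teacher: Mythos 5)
Your argument is correct in substance, and for part (i) and the forward direction of (ii) it is essentially the paper's proof in different clothing: where you run the long exact cohomology sequence of the multiplication sequence $0\to\I_{X/\P^N}(k-1)\to\I_{X/\P^N}(k)\to\I_{Y/\P^{N-1}}(k)\to0$ (your identification of the cokernel with $\I_{Y/\P^{N-1}}$ via the Tor vanishing is fine, since $X$ is integral and not contained in the hyperplane), the paper works with the commutative diagram of restriction maps on global sections and the Snake lemma, which is the same computation read through the structural sequences. The genuine divergence is the converse of (ii): the paper simply quotes \cite[Theorem 1.3.3]{migliore1998introduction}, whereas you give a self-contained argument — injectivity of multiplication by $\ell$ on $H^i(\I_{X/\P^N}(\bullet))$ for $2\le i\le n$ from the aCM vanishings of $Y$, killed by Serre vanishing in large twists, plus the loop through part (i) to handle $i=1$. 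This buys independence from the liaison-theoretic reference; what the citation buys is generality (Migliore's statement is for locally Cohen--Macaulay closed subschemes that need be neither reduced nor irreducible, where finiteness of the deficiency modules does the work your appeal to integrality does here).

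Two loose ends, both easily repaired. First, when you conclude that ``all the vanishings of Remark \ref{rmk:acm-variety} hold,'' note that the cohomological characterization there requires $H^1(\I_{X/\P^N}(k))=0$ for \emph{all} $k\in\Z$, while your route through (i) only covers $k\ge0$; you must add that for $k<0$ the structural sequence gives $H^1(\I_{X/\P^N}(k))\cong H^0(X,\O_X(k))=0$ because $X$ is integral (alternatively, use the other characterization in the Remark, which needs only $k\ge0$ for $H^1$ together with $\O_X$ being an aCM sheaf — and that is the one place where the locally Cohen--Macaulay hypothesis genuinely enters, via the intermediate vanishings $H^i(X,\O_X(k))\cong H^{i+1}(\I_{X/\P^N}(k))=0$ you already proved). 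Second, your closing paragraph's explanation of why local Cohen--Macaulayness is ``unavoidable'' does not reflect your own argument, which uses it at most in the step just described: the cohomological description of aCM via the saturated ideal is valid for any closed subscheme and does not need $Y$ to be Cohen--Macaulay beforehand. In the paper the hypothesis is simply inherited from the generality of the cited theorem.
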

	\begin{proof} Let's consider (i). 		For every $k\ge 0$ we have the following commutative diagram with exact rows 
		{\small \[
			\begin{tikzcd}
				0 & H^0(\P^N,\OPN(k-1)) & H^0(\P^N,\OPN(k)) & H^0(\P^{N-1},\O_{\P^{N-1}}(k)) & 0 \\
				\\
				0 & H^0(X,\O_X(k-1)) & H^0(X,\O_X(k))& H^0(Y,\O_Y(k)) & H^1(X,\O_X(k-1)).
				\arrow[from=1-1, to=1-2]
				\arrow[from=1-2, to=1-3]
				\arrow[from=1-3, to=1-4]
				\arrow[from=1-4, to=1-5]
				\arrow[from=3-1, to=3-2]
				\arrow[from=3-2, to=3-3]
				\arrow[from=3-3, to=3-4, "\rho_k"]
				\arrow[from=1-2, to=3-2, "r_{k-1}"]
				\arrow[from=1-3, to=3-3, "r_k"]
				\arrow[from=1-4, to=3-4, "r'_k"]
				\arrow[from=3-4, to=3-5]
			\end{tikzcd}
			\]}    For the first part, we proceed by induction on $h,$ with $2\le h\le k.$ The base case $h=2$ is obtained immediately from the Snake lemma since $r_1$ is surjective, by the $1$-normality of $X,$ as well as $r'_2$ by hypothesis.  The inductive step with $2<h\le k$ follows again by the Snake lemma, the inductive hypothesis and the surjectivity of $r_h'.$\\
		For the converse, our assumption implies the surjectivity of the composite map $\rho_k\circ r_k.$ The commutativity of the right square tells that $r'_k$ is onto as desired.
		
		To prove the first part of (ii), consider the short exact sequence \[
		\begin{tikzcd}
			0\rar&\O_X(j-1)\rar&\O_X(j)\rar&\O_Y(j)\rar&0
		\end{tikzcd}
		\] for $j\in\Z.$ Remark \ref{rmk:acm-variety} and (i) imply that $Y\subset\P^{N-1}$ is projectively normal. Moreover, it is clear from the cohomology of the above sequence that $H^i(Y,\O_Y(j))=0$ for all $j\in\Z$ and $0<i<n-1.$ Therefore $Y\subset\P^{N-1}$ is aCM by Remark \ref{rmk:acm-variety}. For the converse, see \cite[Theorem 1.3.3]{migliore1998introduction}.
	\end{proof}
	
	\begin{rmk}\label{rmk:sectional-curve-vector-bundle}
		Let $X$ be a regular smooth projective variety of dimension $n\ge1$ and let $\E$ be a very ample vector bundle of rank $r\ge2$ on $X.$ Then $\P(\E)$ is aCM as soon as $$(3-n)s_n(\E^\ast)\ge  3+(K_X+\det(\E))\cdot s_{n-1}(\E^\ast)$$ where $s_k(\E^\ast)$ is the $k$-th Segre class of $\E^\ast.$\footnote{For the definition and the properties of the Segre classes of a vector bundle $\E$ we refer to \cite[§10.1]{eisenbud20163264}, where the authors use the convention $\P(\E)=\mr{Proj}(\Sym(\E^\ast)).$}
		
		To see this, consider a smooth sectional curve $C\subset\P(\E)\subset\P(H^0(X,\E))=\P^M$ for the embedding defined by $|\OPE(1)|$ and let $g=g(C)$ be its genus. Letting $\xi$ be the class of $\OPE(1),$ we know that $$\deg C=\deg\P(E)=\xi^{n+r-1}=s_n(\E^\ast).$$ Observe that this is linearly normal since $H^1(\P(\E),\OPE)=H^1(X,\O_X)=0.$ By Lemma \ref{lem:proj-norm-hyperplane-section} we can reduce to study the projective normality of $C\subset\P^{M+2-n-r}.$ A classical result states that $C\subset\P^{M+2-n-r}$ is projectively normal if $\deg C\ge 2g+1$ (see, e.g., \cite[§2, Corollary to Theorem 6]{mumford1976algebraic}). Using the adjuction formula and recalling that $K_{\P(\E)}=\pi^\ast(K_X+\det(\E))-r\xi,$ we find that
		\begin{align*}
			g&=1+\frac{1}{2}\left(K_{\P(\E)}\cdot\xi^{n+r-2}+(n+r-2)\xi^{n+r-1}\right)\\
			&=1+\frac{1}{2}\left(\pi^\ast(K_X+\det(\E))\cdot\xi^{n+r-2}+(n-2)\xi^{n+r-1}\right)\\
			&=1+\frac{1}{2}\left((K_X+\det(\E))\cdot s_{n-1}(\E^\ast)+(n-2)s_n(\E^\ast)\right).
		\end{align*}
		The conclusion follows by putting all together.
	\end{rmk}
	
	We conclude the section with a result which says that for very ample vector bundles with no first cohomology group over curves and surfaces with $p_g=0,$ the projective normality is equivalent to the $2$-normality.
	
	\begin{lemma}\label{lem:alzati-russo}
		Let $L$ be a very ample line bundle on a smooth projective variety $Y$ of dimension $n\ge1$ and let $Y\subset\P^N$ be the embedding determined by $|L|.$ If $L$ is $2$-normal and $$H^0(Y,K_Y+(n-2)L)=H^1(Y,L)=H^2(Y,\O_Y)=0,$$ then:
		\begin{itemize}
			\item[(i)] $\I_{Y/\P^N}$ is $3$-regular and $I_{Y/\P^N}$ is generated in degree less than or equal to $3,$
			\item[(ii)] $L$ is projectively normal.
		\end{itemize} 
	\end{lemma}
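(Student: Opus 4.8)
The plan is to derive both statements from the single assertion that the ideal sheaf $\I_{Y/\P^N}$ is $3$-regular on $\P^N$, and to establish this $3$-regularity by a term-by-term cohomological check using the structure sequence of $Y$.

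First I would fix, for every $t\in\Z$, the exact sequence
\[
0\longrightarrow\I_{Y/\P^N}(t)\longrightarrow\OPN(t)\longrightarrow\O_Y(t)\longrightarrow0,
\]
and, using that the line bundles $\OPN(t)$ have no intermediate cohomology, record the identifications $H^1(\P^N,\I_{Y/\P^N}(t))=\mr{coker}\big(H^0(\P^N,\OPN(t))\to H^0(Y,\O_Y(t))\big)$ for $t\ge0$ and $H^i(\P^N,\I_{Y/\P^N}(t))\cong H^{i-1}(Y,\O_Y(t))$ for $i\ge2$ (for the twists we use, the outer terms $H^{i-1}(\P^N,\OPN(t))$ and $H^i(\P^N,\OPN(t))$ both vanish). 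By definition $\I_{Y/\P^N}$ is $3$-regular precisely when $H^i(\P^N,\I_{Y/\P^N}(3-i))=0$ for all $i\ge1$. For $i=1$ this is the $2$-normality of $L$; for $i=2$ it is $H^2(\P^N,\I_{Y/\P^N}(1))\cong H^1(Y,L)=0$; for $i=3$ it is $H^3(\P^N,\I_{Y/\P^N})\cong H^2(Y,\O_Y)=0$; and for $i\ge4$ it reads $H^{i-1}(Y,\O_Y(3-i))=0$, which is automatic for dimensional reasons once $i-1>n$. For the remaining indices $4\le i\le n+1$, put $j=i-1\in\{3,\dots,n\}$; Serre duality gives $H^{j}(Y,\O_Y(2-j))\cong H^{n-j}(Y,K_Y+(j-2)L)^\ast$. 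When $j=n$ this is $H^0(Y,K_Y+(n-2)L)^\ast$, which vanishes by hypothesis; when $3\le j<n$ we have $n-j\ge1$ and $(j-2)L$ ample (as $j\ge3$ and $L$ is ample), so Kodaira vanishing makes it $0$. Hence $\I_{Y/\P^N}$ is $3$-regular.

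Part (i) then follows: by the first part of Lemma \ref{lem:m-regular-resolution} with $X=\P^N$, $B=\OPN(1)$ (so $M=1$) — equivalently, by the classical Castelnuovo--Mumford regularity theorem on projective space — a $3$-regular ideal sheaf admits a resolution $\cdots\to W_1\o\OPN(-4)\to W_0\o\OPN(-3)\to\I_{Y/\P^N}\to0$ with $W_0=H^0(\P^N,\I_{Y/\P^N}(3))$, so the saturated homogeneous ideal $I_{Y/\P^N}=\bigoplus_tH^0(\P^N,\I_{Y/\P^N}(t))$ is generated in degrees $\le3$. Part (ii) follows as well: being $3$-regular, $\I_{Y/\P^N}$ is $k$-regular for every $k\ge3$, whence $H^1(\P^N,\I_{Y/\P^N}(k))=0$ for all $k\ge2$; and $H^1(\P^N,\I_{Y/\P^N}(1))=0$ because $Y\subset\P^N$ is embedded by the complete linear system $|L|$, so $Y\subset\P^N$ is $k$-normal for all $k\ge1$, i.e. $L$ is projectively normal.

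The computation is elementary; the one place that calls for care is the accounting in the range $4\le i\le n+1$, where one must verify that the unique cohomology group on $Y$ lying outside the scope of Kodaira vanishing is exactly $H^0(Y,K_Y+(n-2)L)$ — that is, that for every intermediate index $3\le j<n$ the bundle $(j-2)L$ is strictly positive — so that the hypotheses $H^0(Y,K_Y+(n-2)L)=H^1(Y,L)=H^2(Y,\O_Y)=0$, together with the $2$-normality of $L$ and linear normality, really do account for all the vanishings that force $3$-regularity. For $n\le2$ this range is empty and the Kodaira step is unnecessary.
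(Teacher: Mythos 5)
Your proof is correct, and it takes a more self-contained route than the paper. The paper verifies exactly the same package of vanishings that you do --- it observes that the hypotheses together with Kodaira vanishing give $H^{i+1}(Y,L^{\otimes(1-i)})=0$ for $0\le i\le n-1$ (your groups $H^{j}(Y,\O_Y(2-j))$, rewritten without Serre duality) --- and then, using that $Y\subset\P^N$ is linearly normal, quotes Proposition~1.2 of Alzati--Russo \cite{alzati2002k} together with Remark \ref{rmk:proj-norm} to conclude $3$-regularity of $\I_{Y/\P^N}$, generation of $I_{Y/\P^N}$ in degree $\le3$, and projective normality. What you have done, in effect, is reprove the relevant special case of that cited proposition from scratch: the twist-by-twist analysis of $0\to\I_{Y/\P^N}(3-i)\to\OPN(3-i)\to\O_Y(3-i)\to0$, the identification of $2$-normality of $L$ with $H^1(\P^N,\I_{Y/\P^N}(2))=0$ (valid because the embedding is by the complete system $|L|$), the Serre duality/Kodaira bookkeeping in the range $3\le j\le n$ singling out $H^0(Y,K_Y+(n-2)L)$, and then the standard Castelnuovo--Mumford consequences on $\P^N$ (generation in degrees $\le3$, and $H^1(\P^N,\I_{Y/\P^N}(k))=0$ for $k\ge2$, with $k=1$ handled by linear normality) for (i) and (ii). Your argument buys transparency and independence from the external reference, at the cost of length; the paper's buys brevity by outsourcing precisely the implication you verify by hand. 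One small point worth making explicit in your write-up: the passage from the sheaf-level resolution to generation of the graded ideal in degrees $\le3$ rests on Mumford's surjectivity statement $H^0(\I_{Y/\P^N}(k))\o H^0(\OPN(1))\twoheadrightarrow H^0(\I_{Y/\P^N}(k+1))$ for $k\ge3$ (equivalently, the second half of Lemma \ref{lem:m-regular-resolution}), which you do gesture at via ``the classical Castelnuovo--Mumford regularity theorem,'' so no gap results.
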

	
	\begin{proof}
		Combining our assumption with Kodaira vanishing, we immediately see that $H^{i+1}(Y,L^{\o (1-i)})=0$ for all $0\le i\le\dim Y-1.$ As $Y\subset \P^N$ is linearly normal as embedded through a complete linear system, the statement follows by \cite[Proposition 1.2]{alzati2002k} and Remark \ref{rmk:proj-norm}.
	\end{proof}
	
	\begin{prop}\label{prop:alzati-russo}
		Let $Y$ be a smooth projective variety of dimension $m\ge1$ with $H^2(Y,\O_Y)=0$ and let $\E$ be a $2$-normal very ample vector bundle on $Y$ such that $H^1(Y,\E)=H^0(Y,(K_Y+\det(\E))\o\S^{m-3}\E)=0.$ Then  $\E$ is projectively normal, $\I_{\P(\E)/\P(H^0(Y,\E))}$ is $3$-regular and $I_{\P(\E)/\P(H^0(Y,\E))}$ is generated in degree less than or equal to $3.$
	\end{prop}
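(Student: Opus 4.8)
The plan is to deduce all three conclusions from the line-bundle case already at our disposal, namely Lemma~\ref{lem:alzati-russo} applied to the polarized variety $(\P(\E),\OPE(1))$. Write $X=\P(\E)$, let $\pi\colon X\to Y$ be the projection, set $L=\OPE(1)$ and $n=\dim X=m+\rk\E-1\ge 1$. Since $\E$ is very ample, $L$ is very ample and embeds $X$ into $\P^M:=\P(H^0(Y,\E))$ (note $H^0(X,L)=H^0(Y,\E)$), and $X$ is smooth because it is a projective bundle over the smooth variety $Y$. The three assertions to be proved — that $\I_{X/\P^M}$ is $3$-regular, that $I_{X/\P^M}$ is generated in degree $\le 3$, and that $L$ (equivalently $\E$) is projectively normal — are exactly the conclusions of Lemma~\ref{lem:alzati-russo} for $(X,L)$. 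So it is enough to verify the four hypotheses of that lemma for $(X,L)$.

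Three of them are essentially immediate. We just observed that $L$ is very ample; $L$ is $2$-normal precisely because $\E$ is $2$-normal, which by definition means that $\OPE(1)$ is $2$-normal on $\P(\E)$; and the two cohomological vanishings follow from the isomorphisms $H^i(\P(\E),\OPE(k))\cong H^i(Y,\S^k\E)$ of Remark~\ref{rmk:cohomology-serre}: taking $k=1$ gives $H^1(X,L)\cong H^1(Y,\E)=0$, and taking $k=0$ gives $H^2(X,\O_X)\cong H^2(Y,\O_Y)=0$.

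The only substantial step is the remaining hypothesis $H^0(X,K_X+(n-2)L)=0$. Here I use the relative canonical bundle formula $K_X=\pi^\ast(K_Y+\det\E)-(\rk\E)\,L$ recorded in Remark~\ref{rmk:sectional-curve-vector-bundle}; since $n-2=m+\rk\E-3$, substitution gives
\[
K_X+(n-2)L\;\cong\;\pi^\ast(K_Y+\det\E)\otimes\OPE(m-3).
\]
By the Leray spectral sequence together with the projection formula (the twisting sheaf $K_Y+\det\E$ being locally free) this turns the global sections into
\[
H^0\!\big(X,\,K_X+(n-2)L\big)\;\cong\;H^0\!\big(Y,\,(K_Y+\det\E)\otimes\pi_\ast\OPE(m-3)\big).
\]
For $m\ge 3$ one has $\pi_\ast\OPE(m-3)=\S^{m-3}\E$, so this group is $H^0(Y,(K_Y+\det\E)\otimes\S^{m-3}\E)$, which vanishes by hypothesis. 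For $m\le 2$ the twist $\OPE(m-3)$ has negative degree on every fibre $\P^{\rk\E-1}$, hence $\pi_\ast\OPE(m-3)=0$ whenever $\rk\E\ge 2$ and the group vanishes trivially; and in the degenerate case $\rk\E=1$ we have $X\cong Y$ and the required vanishing is the standing hypothesis itself (with $\S^{m-3}\E$ interpreted as $\E^{\otimes(m-3)}$). Thus $H^0(X,K_X+(n-2)L)=0$ in all cases, and Lemma~\ref{lem:alzati-russo} applies to $(X,L)$, yielding the statement.

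I expect the only delicate part to be bookkeeping rather than anything conceptual: getting the canonical-bundle formula and the exponent $m-3$ exactly right under the conventions $\P(\E)=\mathrm{Proj}(\Sym\E)$ and $\pi_\ast\OPE(k)=\S^k\E$ for $k\ge 0$, and making sure the pushforward argument genuinely covers the low-dimensional/low-rank corners where $m-3<0$. Everything else is a direct invocation of the already established Lemma~\ref{lem:alzati-russo} and Remark~\ref{rmk:cohomology-serre}.
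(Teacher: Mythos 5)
Your proof is correct and takes essentially the same route as the paper: one verifies the hypotheses of Lemma~\ref{lem:alzati-russo} for $(\P(\E),\OPE(1))$, using $H^i(\P(\E),\OPE(k))\cong H^i(Y,\S^k\E)$ for very ampleness, $2$-normality and the two cohomological vanishings, and then computing $K_{\P(\E)}+\OPE(m+r-3)=\pi^\ast(K_Y+\det(\E))+\OPE(m-3)$ and pushing forward to get $H^0(Y,(K_Y+\det(\E))\o\S^{m-3}\E)=0$. Your additional bookkeeping for the corners $m\le 2$ and $\rk\E=1$ is sound and merely makes explicit what the paper leaves implicit.
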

	\begin{proof}
		We just need to verify the hypothesis of Lemma \ref{lem:alzati-russo}. We already know that $\OPE(1)$ is very ample and $2$-normal, and also that $H^1(\P(\E),\OPE(1))=H^2(\P(\E),\OPE)=0.$ Finally, calling $r$ the rank of $\E,$ we have
		\begin{align*}
			H^0(\P(\E),K_{\P(\E)}+\OPE(m+r-3))&= H^0(\P(\E),\pi^\ast(K_Y+\det(\E))+\OPE(m-3))\\
			&\cong H^0(Y,(K_Y+\det(\E))\o\pi_\ast\OPE(m-3))\\
			&\cong H^0(Y,(K_Y+\det(\E))\o\S^{m-3}\E)\\
			&=0
		\end{align*} as required.
	\end{proof}
	
	\section{Projective normality of Ulrich bundles on curves}
	The goal of this section is to determine under which conditions an Ulrich bundle on a smooth projective curve is projectively normal. Thanks to Proposition \ref{prop:alzati-russo} we only need to study the $2$-normality of an Ulrich bundle. In this section, a curve is always smooth and projective.
	
	\begin{rmk}\label{rmk:ulrich-curve}
		Given any globally generated ample line bundle of degree $d$ on a genus $g$ curve $C,$ a line bundle $\mc{L}$ on $C$ is $B$-Ulrich if and only if $\mc{L}=M+B$ for a non-effective line bundle $M$ of degree $g-1.$ In particular there are $B$-Ulrich bundles of any rank on $C.$ 
		
		The proof is analogous to the case of a very ample polarization. Hence we refer to \cite[§4.1]{coskun2017survey} or \cite[(3.3)]{beauville2018introduction}.
	\end{rmk}
	
	Recall that for a vector bundle $\E$ on a curve, it is defined the quantity
	$$\mu^-(\E)=\min\left\{\mu(Q)\ |\ Q\ \text{is a quotient vector bundle of}\ \E\right\},$$ where $\mu(-)=\deg(-)/\rk(-)$ is the slope of a  vector bundle. We always have $\mu(\E)\ge\mu^-(\E)$ with the equality holding if $\E$ is $\mu$-semistable. We refer to \cite{butler1994normal} for more details.
	
	\begin{prop}\label{prop:proj-norm-curve}
		Let $\E$ be a vector bundle on a smooth projective curve $C$ with slope $\mu^-(\E)>2g(C).$ Then $\E$ is strongly $2$-normal, projectively normal, $\I_{\P(\E)/\P(H^0(C,\E))}$ is $3$-regular and $I_{\P(\E)/\P(H^0(X,\E))}$ is generated in degree $\le3$ in the embedding $\P(\E)\subset\P(H^0(X,\E))$ determined by $|\OPE(1)|.$
	\end{prop}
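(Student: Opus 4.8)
The plan is to reduce everything to the single vanishing $H^1(C,M_\E\o\E^{\o(k-1)})=0$ for all $k\ge1$, where $M_\E$ is the syzygy bundle of $\E$, using Butler's slope estimate for $M_\E$ as the key input. First I would record the elementary positivity consequences of $\mu^-(\E)>2g$. Since $\mu^-(\E)>2g\ge0$, the ampleness criterion for vector bundles on a curve makes $\E$ ample; since $\mu^-(\E)-1>2g-2$ (already for $g=0$), every twist $\E(-p)$ is non-special, so $\E$ is globally generated and the syzygy exact sequence
\[
0\longrightarrow M_\E\longrightarrow H^0(C,\E)\o\O_C\longrightarrow\E\longrightarrow0
\]
is defined; and $\mu^-(\E)>2g-2$ gives $H^1(C,\E)=0$.

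The key input is Butler's theorem \cite{butler1994normal}: under the hypothesis $\mu^-(\E)>2g$ it yields $\mu^-(M_\E)>-2$ (in fact $\mu^-(M_\E)\ge-\mu^-(\E)/(\mu^-(\E)-g)$; when $\E$ is not semistable this is obtained by d\'evissage along its Harder--Narasimhan filtration, using that the subbundles are non-special so that $H^0$ is exact along the filtration, together with the induced short exact sequences of syzygy bundles). Granting this, I prove strong $k$-normality for every $k\ge1$ by induction on $k$. The case $k=1$ is trivial; for $k\ge2$, tensoring the syzygy sequence by $\E^{\o(k-1)}$ and taking cohomology, the map $H^0(C,\E)\o H^0(C,\E^{\o(k-1)})\to H^0(C,\E^{\o k})$ is surjective as soon as $H^1(C,M_\E\o\E^{\o(k-1)})=0$, and precomposing with the inductively surjective $H^0(C,\E)^{\o(k-1)}\to H^0(C,\E^{\o(k-1)})$ makes $\mu_\E^k$ surjective. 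The required vanishing follows from the characteristic-zero slope inequality
\[
\mu^-(M_\E\o\E^{\o(k-1)})\ge\mu^-(M_\E)+(k-1)\mu^-(\E)>-2+(k-1)\cdot2g\ge2g-2,
\]
since a bundle on $C$ all of whose quotients have slope $>2g-2$ is non-special. Taking $k=2$ gives strong $2$-normality; as $\E$ is moreover ample, being $k$-normal for every $k$ means that $\OPE(1)$ is normally generated, i.e. $\E$ is projectively normal, and in particular (Remark \ref{rmk:proj-norm}) $\OPE(1)$, hence $\E$, is very ample.

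It remains to treat the ideal sheaf, and here I would invoke Proposition \ref{prop:alzati-russo} with $Y=C$ and $m=1$: its hypotheses hold because $H^2(C,\O_C)=0$, $\E$ is $2$-normal and very ample, $H^1(C,\E)=0$, and the term $H^0(C,(K_C+\det(\E))\o\S^{m-3}\E)=H^0(C,(K_C+\det(\E))\o\S^{-2}\E)$ vanishes trivially since $\S^{-2}\E=0$. The proposition then gives that $\I_{\P(\E)/\P(H^0(C,\E))}$ is $3$-regular and that $I_{\P(\E)/\P(H^0(C,\E))}$ is generated in degree $\le3$ (and re-derives projective normality). I expect the only genuinely delicate point to be the slope bound for $M_\E$: extracting $\mu^-(M_\E)>-2$ from Butler's results in the non-semistable range needs the Harder--Narasimhan d\'evissage indicated above, while everything downstream is routine Riemann--Roch together with the tensor-product slope inequalities.
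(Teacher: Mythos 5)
Your proposal is correct and follows essentially the same route as the paper: the paper simply quotes Butler's Lemma 1.12 (very ampleness and $H^1(C,\E)=0$) and Butler's Theorem 2.1 (strong $2$-normality) and then concludes with Proposition \ref{prop:alzati-russo}, exactly the proposition you invoke for the $3$-regularity of $\I_{\P(\E)/\P(H^0(C,\E))}$ and the degree $\le 3$ generation of the ideal. The only difference is that you unpack the cited results: your slope bound $\mu^-(M_\E)>-2$ and the induction giving $H^1(C,M_\E\o\E^{\o(k-1)})=0$ (hence strong $k$-normality for all $k$) is precisely Butler's argument, and the Harder--Narasimhan d\'evissage you flag as delicate is already built into Butler's statements, which are formulated in terms of $\mu^-$ for arbitrary bundles; your version buys projective normality directly, without needing Proposition \ref{prop:alzati-russo} for that part.
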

	\begin{proof} The vector bundle $\E$ is very ample with $H^1(C,\E)=0$ by \cite[Lemma 1.12]{butler1994normal} and strongly $2$-normal by \cite[Theorem 2.1]{butler1994normal}. The assertion follows from Proposition \ref{prop:alzati-russo}. 
	\end{proof}
	
	\begin{cor}\label{cor:ulrich-proj-norm-curve}
		Let $C$ be a smooth projective curve and let $\E$ be a $B$-Ulrich bundle of rank $r$ on $C,$ where $B$ is a globally generated ample line bundle of degree $d$ on $C.$ If $d>g(C)+1,$ then $\E$ is strongly $k$-normal for all $k\ge2,$ projectively normal, and $\I_{\P(\E)/\P^{rd-1}}$ is $3$-regular and $I_{\P(\E)/\P^{rd-1}}$ is generated in degree $\le3$ in the embedding $\P(\E)\subset\P^{rd-1}$ determined by $|\OPE(1)|.$
	\end{cor}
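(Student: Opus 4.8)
The plan is to reduce the whole statement to Proposition~\ref{prop:proj-norm-curve} by checking its numerical hypothesis $\mu^-(\E) > 2g(C)$, and then to bootstrap strong $2$-normality to strong $k$-normality for all $k \ge 2$ using Remark~\ref{rmk:strong-k-normality-curve}.

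First I would pin down the slope of $\E$. Since $\E$ is $B$-Ulrich, Remark~\ref{rmk:ulrich-facts} gives $c_1(\E)\cdot B^{n-1} = \tfrac r2(K_C + 2B)\cdot B^{n-1}$ with $n = 1$, i.e. $\deg\E = \tfrac r2\deg(K_C + 2B) = r(g - 1 + d)$ where $g = g(C)$; equivalently this drops out of Riemann--Roch from $h^0(C,\E) = rd$ and $H^1(C,\E) = 0$, both consequences of the $0$-regularity recorded in Remark~\ref{rmk:ulrich-facts}. Hence $\mu(\E) = g - 1 + d$. Again by Remark~\ref{rmk:ulrich-facts}, $\E$ is semistable, hence slope-semistable (the two notions agree on a curve), so $\mu^-(\E) = \mu(\E) = g - 1 + d$. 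The hypothesis $d > g + 1$ is then exactly the inequality $\mu^-(\E) > 2g = 2g(C)$ required by Proposition~\ref{prop:proj-norm-curve}.

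With this inequality in hand, Proposition~\ref{prop:proj-norm-curve} applies verbatim and yields that $\E$ is strongly $2$-normal and projectively normal, that $\I_{\P(\E)/\P(H^0(C,\E))}$ is $3$-regular, and that $I_{\P(\E)/\P(H^0(C,\E))}$ is generated in degree $\le 3$; since $h^0(C,\E) = rd$ by Remark~\ref{rmk:ulrich-facts}, the ambient space is $\P^{rd-1}$, which covers every claim except strong $k$-normality for $k \ge 2$. For the latter, note that $\E$ is $0$-regular (Remark~\ref{rmk:ulrich-facts}) and that $(C,B)$ is an irreducible curve with a globally generated ample line bundle, so Remark~\ref{rmk:strong-k-normality-curve} applies to the strongly $2$-normal $0$-regular bundle $\E$ and gives strong $k$-normality for all $k \ge 2$. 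There is no genuine obstacle here; the only point requiring a little care is the passage $\mu^-(\E) = \mu(\E)$, which rests on the semistability of Ulrich bundles on curves.
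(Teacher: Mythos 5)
Your proposal is correct and follows the paper's own route exactly: semistability and the slope computation from Remark \ref{rmk:ulrich-facts} give $\mu^-(\E)=\mu(\E)=d+g-1>2g$ precisely when $d>g+1$, then Proposition \ref{prop:proj-norm-curve} and Remark \ref{rmk:strong-k-normality-curve} yield all the conclusions. The only difference is that you spell out the slope and $h^0(C,\E)=rd$ details that the paper leaves implicit.
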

	\begin{proof}
		As stated in Remark \ref{rmk:ulrich-facts}, $\E$ is semistable with slope $\mu(\E)>2g(C).$ The claim follows by Proposition \ref{prop:proj-norm-curve} and Remark \ref{rmk:strong-k-normality-curve}.
	\end{proof}
	
	This condition is certainly not necessary for the projective normality of an Ulrich bundle.
	
	\begin{rmk}\label{rmk:ulrich-proj-norm-plane-curve}
		For a smooth plane $C\subset\P^2$ of degree $d\ge 4$ one has $d\le g+1,$ but all Ulrich bundles on $C\subset\P^2$ are very ample \cite[Theorem 1]{lopez2023geometrical} and also strongly $2$-normal by \cite[Example B.1.3]{lazarsfeld2017positivity} applied to the resolution (\ref{eq:ulrich-tensor-res}) in Lemma \ref{lem:ulrich-tensor-res} below. In particular, by Proposition \ref{prop:alzati-russo} every Ulrich bundle $\E$ on $C\subset\P^2$ is projectively normal, $\I_{\P(\E)/\P^{rd-1}}$ is $3$-regular and $I_{\P(\E)/\P^{rd-1}}$ is generated in degree $\le3$ in the embedding $\phi_{\OPE(1)}\colon\P(\E)\subset\P^{rd-1}.$ 
	\end{rmk}
	
	However, the condition in Corollary \ref{cor:ulrich-proj-norm-curve} is sharp some sense if we consider Ulrich bundles defined with respect to globally generated ample line bundles (and possibly non-very ample).
	
	\begin{rmk}\label{rmk:optimality}
		Let $C$ be a hyperelliptic curve of genus $g=3$ and let $\mc{L}$ be any $K_C$-Ulrich line bundle (Remark \ref{rmk:ulrich-curve}). In particular, since $\mc{L}$ cannot be of the form $K_C(D)$ for an effective divisor $D$ of degree $2,$ we know that $\mc{L}$ is very ample. However, $\mc{L}$ cannot be projectively normal by \cite[Corollary 1.4]{green1986projective}. 
	\end{rmk}
	
	In light of this, we will focus on the case of Ulrich bundles defined with respect to a very ample polarization.	
	
	\begin{rmk}
		Ulrich bundles on a smooth rational curve of degree $d\ge2$ are always projectively normal by Corollary  \ref{cor:ulrich-proj-norm-curve}. Since all very ample line bundles on smooth projective curves of genus $g=1,2$ have degree $d\ge g+2,$ the same is true also in this case. 
	\end{rmk}
	
	We now pass to the study of projective normality of the general $B$-Ulrich bundle on a curve $C.$ We first observe that, as in the case of very ample polarization, they define open subsets in the corresponding moduli space of semistable vector bundles on $C.$ First we state a simple remark.
	
	\begin{rmk}\label{rmk:non-general-divisor}
		Let $C$ be a smooth projective curve of genus $g$ and fix a line bundle $A$ of positive degree $a>0.$ The line bundles of the form $A+D_d,$ for $D_d$ being an effective divisor of degree $d\le g-1,$ are contained in a proper closed subset of $\Pic^{a+d}(C).$
	\end{rmk}
	
	\begin{rmk}\label{rmk:open-ulrich}
		Let $C$ be a smooth projective curve of genus $g$ and let $B$ be a globally generated ample line bundle of degree $d$ on $C.$ By semicontinuity $B$-Ulrich bundles of rank $r,$ which are always semistable, define a non-empty open subset in the good moduli space (resp. moduli stack) of semistable vector bundles of rank $r$ and degree $r(d+g-1)$ (see also \cite[p. 8]{casanellas2012stable} and \cite[p.  95]{coskun2017survey}). Moreover, by Remarks \ref{rmk:ulrich-curve}--\ref{rmk:non-general-divisor}, $B$-Ulrich line bundles form a dense open subset in $\Pic^{d+g-1}(C)$ (see also \cite[(4.1.3)]{coskun2017survey}).
	\end{rmk}
	
	We can now prove one of the main result of this section. 
	
	\begin{prop}\label{prop:ulrich-proj-norm}
		Let $C$ be a smooth projective curve of genus $g\ge2$ and let $B$ be a globally generated line bundle of degree $d>1$ such that there exists a linear series $|V|\subset|B|$ which induces a morphism which is étale onto the schematic image.
		
		If $C$ supports a non-special projectively normal line bundle of degree $d+g-1,$ then the general $B$-Ulrich bundle of rank $r$ is projectively normal, for any $r\ge1.$ In particular, the general rank $r$ Ulrich bundle is projectively normal on $C$ if $d\ge g+2-\mr{Cliff}(C).$\footnote{The Clifford index of a line bundle $L$ on $C$ (of genus $g\ge2$) is the quantity 
			\[
			\mr{Cliff}(L)=\deg(L)-2\dim|L|=\deg(L)-2(h^0(C,L)-1).
			\] The \emph{Clifford index of $C$} is defined as
			$
			\mr{Cliff}(C)=\min\left\{\mr{Cliff}(A)\ \left|\ A\in\Pic(C),\ h^i(C,A)\ge2\ \mbox{for}\ i=0,1\right.\right\}.
			$}
	\end{prop}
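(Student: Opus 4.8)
The plan is to reduce projective normality of the \emph{general} rank-$r$ $B$-Ulrich bundle to that of one explicit $B$-Ulrich bundle — a direct sum of a single well-chosen line bundle — and then to invoke the openness statement of Lemma~\ref{lem:open-condition} together with the irreducibility of the moduli space of semistable bundles on $C$. First I would use the hypothesis to produce a line bundle $N_0$ on $C$ of degree $d+g-1$ that is non-special and projectively normal; such an $N_0$ is then $2$-normal, very ample (in particular globally generated) and, having positive degree ($d>1$, $g\ge2$), ample, and it is semistable as a line bundle. Applying Lemma~\ref{lem:open-condition} to a Poincar\'e line bundle on $C\times\Pic^{d+g-1}(C)$ at $[N_0]$ yields an open subset $V\subseteq\Pic^{d+g-1}(C)$ over which every line bundle is non-special, ample, globally generated and $2$-normal. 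Since by Remark~\ref{rmk:open-ulrich} the $B$-Ulrich line bundles form a dense open subset $W\subseteq\Pic^{d+g-1}(C)$, and $\Pic^{d+g-1}(C)$ is irreducible, I may fix $N\in V\cap W$: a non-special, $2$-normal, $B$-Ulrich line bundle of degree $d+g-1$.

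Next I would set $\E_0:=N^{\oplus r}$ and verify the hypotheses of Lemma~\ref{lem:open-condition} for it. It is $B$-Ulrich, since $H^i(C,\E_0(-B))=H^i(C,N(-B))^{\oplus r}=0$ for all $i\ge0$, hence (Remark~\ref{rmk:ulrich-facts}) semistable, $0$-regular, non-special and globally generated. It is ample: writing $\E_0=N\o\O_C^{\oplus r}$ one gets $\P(\E_0)\cong\P(\O_C^{\oplus r})\cong C\times\P^{r-1}$ with $\O_{\P(\E_0)}(1)\cong p_1^\ast N\o p_2^\ast\O_{\P^{r-1}}(1)$, an external tensor product of ample line bundles, hence ample. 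It is strongly $2$-normal by Remark~\ref{rmk:direct-sum}, because the line bundle $N$ is strongly $2$-normal ($2$-normality and strong $2$-normality coincide for line bundles). Thus $[\E_0]$ is a point carrying all these properties inside the non-empty open subset $\mathcal{U}$ of rank-$r$ $B$-Ulrich bundles of the (irreducible) moduli space $\mathcal{M}$ of semistable bundles of rank $r$ and degree $r(d+g-1)$.

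Then I would apply Lemma~\ref{lem:open-condition} to the universal bundle over an atlas of $\mathcal{M}$ at the point $[\E_0]$: local freeness, semistability, non-speciality, ampleness, global generation and strong $2$-normality all propagate to an open neighbourhood $U$ of $[\E_0]$. By irreducibility of $\mathcal{M}$, the open subset $U\cap\mathcal{U}$ is dense in $\mathcal{U}$, and for $[\E]\in U\cap\mathcal{U}$ the bundle $\E$ is $B$-Ulrich — hence $0$-regular — ample and strongly $2$-normal, so it is projectively normal by Remark~\ref{rmk:strong-k-normality-curve}. This proves the first assertion. For the last one, if $d\ge g+2-\mr{Cliff}(C)$ then every non-special line bundle of degree $d+g-1\ge 2g+1-\mr{Cliff}(C)$ is normally generated by the Green--Lazarsfeld projective normality theorem \cite{green1986projective}, and non-special line bundles of this degree exist (the degree is at least $g$); hence $C$ supports a non-special projectively normal line bundle of degree $d+g-1$ and the first part applies.

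I expect the main obstacle to be this spreading-out step: one has to pick the right proper flat family — the universal bundle on an atlas of the moduli space — to which Lemma~\ref{lem:open-condition} genuinely applies, and then use irreducibility of $\mathcal{M}$ to guarantee that the open locus where strong $2$-normality holds actually meets, hence is dense in, the $B$-Ulrich locus $\mathcal{U}$. A secondary thing to be careful about is that $N^{\oplus r}$, though a ``boundary'' polystable sheaf, does lie in $\mathcal{U}$ and is ample and strongly $2$-normal there, which is why the identification of $\P(N^{\oplus r})$ and the use of Remark~\ref{rmk:direct-sum} are recorded explicitly.
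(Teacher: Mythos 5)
Your overall route is the paper's: produce a non-special projectively normal $B$-Ulrich line bundle $N$ of degree $d+g-1$ by combining the openness statement of Lemma~\ref{lem:open-condition} (applied to a Poincar\'e bundle) with the density of Ulrich line bundles in $\Pic^{d+g-1}(C)$ (Remark~\ref{rmk:open-ulrich}), pass to $N^{\oplus r}$ and use Remark~\ref{rmk:direct-sum} to get a strongly $2$-normal, ample, non-special, globally generated $B$-Ulrich bundle of rank $r$, then spread these properties out over the irreducible moduli space of semistable bundles of rank $r$ and degree $r(d+g-1)$, and finally deduce the Clifford-index bound from \cite{green1986projective}. Whether one concludes projective normality from strong $2$-normality via Remark~\ref{rmk:strong-k-normality-curve} or via Proposition~\ref{prop:alzati-russo}, as the paper does, is immaterial.

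The genuine gap is exactly the spreading-out step you flagged but did not resolve. Lemma~\ref{lem:open-condition} applied to the universal family over an atlas (say an open subscheme of a Quot scheme) produces an open subset of the \emph{atlas}, not ``an open neighbourhood $U$ of $[\E_0]$'' in the moduli space: $M^{ss}_C(r,e)$ is not a fine moduli space and carries no universal bundle, and the map from the atlas to $M^{ss}_C(r,e)$ does not carry open sets to open sets, since points of $M^{ss}_C(r,e)$ are S-equivalence classes and images must be saturated. The difficulty is concentrated precisely at your base point: for $r\ge2$ the bundle $\E_0=N^{\oplus r}$ is strictly semistable, so the fibre over $[\E_0]$ consists of all semistable bundles S-equivalent to it, and an open subset of the atlas meeting one preimage of $[\E_0]$ need not dominate any neighbourhood of $[\E_0]$. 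The paper handles this by working on the moduli \emph{stack} $\mc{M}^{ss}_C(r,e)$, where Lemma~\ref{lem:open-condition} yields an open substack $\mc{P}$ of non-special, ample, globally generated, strongly $2$-normal bundles, and then using that the good moduli space morphism $f\colon\mc{M}^{ss}_C(r,e)\to M^{ss}_C(r,e)$ is universally closed \cite{alper2022projectivity}: the image $f(\mc{M}^{ss}_C(r,e)\setminus\mc{P})$ is closed, its complement is an open subset of $M^{ss}_C(r,e)$ meeting the Ulrich locus, and irreducibility gives density. Your argument can be repaired without the stack — for instance, the image in $M^{ss}_C(r,e)$ of the relevant dense open subset of an irreducible atlas is constructible and dense by Chevalley, hence contains a dense open set, which one then intersects with the stable and Ulrich loci — but some argument of this kind must be supplied; as written, the passage from the atlas to the moduli space is an assertion, not a proof, and it is the only point where your proposal falls short of the paper's proof.
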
	
	
	For a detailed account of the main properties of the moduli space (resp. moduli stack) of semistable vector bundles on a smooth projective curve we refer to \cite{alper2013good,alper2022projectivity}.
	
	\begin{proof}
		The second part of the statement will follow from the first one combined with \cite[Theorem 1]{green1986projective}. For the first part, observe that all Ulrich bundles on $C$ are ample \cite[Corollary 3]{buttinelli2024positivity}, then fix $r\ge 1$ and let $M^U_C(r,e)\subset M_{C}^{ss}(r,e)$ be the non-empty open subset of $B$-Ulrich bundles (Remark \ref{rmk:open-ulrich}) of rank $r$ in the irreducible good moduli space of semistable vector bundles of rank $r$ and degree $e=r(d+g-1)$ (see \cite[Theorem 3.12]{alper2022projectivity}).  Then Lemma \ref{lem:open-condition} says that the subset $\mc{P}$ of (semistable) non-special, ample, globally generated, strongly $2$-normal vector bundles $\F$ is open in the (irreducible) moduli stack $\mc{M}_{C}^{ss}(r,e)$ of semistable vector bundles of rank $r$ and degree $e=r(d+g-1)$ on $C.$  As the good moduli space $$f\colon \mc{M}_{C}^{ss}(r,e)\to {M}_{C}^{ss}(r,e)$$ is universally closed \cite[Theorem 3.5]{alper2022projectivity}, the subset
		$$P^c=f(\mc{M}_{C}^{ss}(r,e)\backslash \mc{P})\subset {M}_{C}^{ss}(r,e)$$
		is closed and consists of points $[\E]\in {M}_{C}^{ss}(r,e)$ such that there exists a vector bundle $\F\in[\E]$ which either is special either is not ample either is not globally generated or $\mu_\F$ is not surjective. The complement $P=M^{ss}_C(r,e)\backslash P^c$ is then open and can be described as $$\left\{[\E]\in M^{ss}_C(r,e)\ |\ \exists\F\in[\E]\colon \F\ \text{is non-special, ample, globally generated and strongly}\ 2\text{-normal}\right\}.$$ 
		
		Now, if there exists a non-special projectively normal line bundle in $\Pic^{d+g-1}(C),$ by Lemma \ref{lem:open-condition} (or \cite[Lemma 1.3]{martens1985normal}) we can find a dense open subset consisting of non-special projectively normal line bundles of degree $d+g-1.$ In virtue of the openness of Ulrich line bundles in $\Pic^{d+g-1}(C)$ (Remark \ref{rmk:open-ulrich}), there exists a dense open subset of projectively normal $B$-Ulrich line bundles on $C\subset\P^N.$ Let $\mc{L}$ be one of them. Then $\mc{L}^{\oplus r}$ lies in $M_C^U(r,e)\cap P$ by Remark \ref{rmk:direct-sum}. Therefore the conclusion follows by combining the irreducibility of ${M}_{C}^{ss}(r,e)$ and Proposition \ref{prop:alzati-russo}.
	\end{proof}
	
	As an application of this result, we see that the bound in Corollary \ref{cor:ulrich-proj-norm-curve} can be lowered for the general rank Ulrich bundle.
	
	\begin{lemma}\label{lem:kko}
		Let $C\subset\P^N$ be a smooth projective curve of genus $g\ge3$ and degree $d>1.$ Then:
		\begin{itemize}
			\item[(i)] If $d=g,g+1,$ all Ulrich line bundles on $C$ are projectively normal.
			\item[(ii)] If $g\ge g_h,$ where \[
			g_h=\left\{\begin{array}{rcl}
				15 & \mbox{if} & h=2\\
				17 & \mbox{if} & h=3\\
				27 & \mbox{if} & h=4\\
				33 & \mbox{if} & h=5\\
			\end{array}\right.
			\]
			the general non-special line bundle of degree $2g-h$ is projectively normal.
		\end{itemize}
		In particular, the general rank $r$ Ulrich vector bundle is projectively normal if $d=g,g+1$ and if $d=g-h+1$ and $g\ge g_h$ for $h=2,3,4,5.$
	\end{lemma}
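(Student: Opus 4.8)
The plan is to reduce both statements to the projective normality of a single suitably chosen non-special line bundle on $C$, and then to feed the outcome into Proposition~\ref{prop:ulrich-proj-norm} for the rank-$r$ conclusion. For part (i), Remark~\ref{rmk:ulrich-curve} says that an Ulrich line bundle $\mc L$ on $C$ has the shape $\mc L=B+M$ with $\deg M=g-1$ non-effective, so $\deg\mc L=d+g-1\in\{2g-1,2g\}$ and $\mc L$ is non-special (its degree is $\ge 2g-1$). By the Green--Lazarsfeld bound \cite[Theorem 1]{green1986projective}, a non-special line bundle of degree $\ge 2g+1-\mr{Cliff}(C)$ is normally generated, so it suffices to show $\mr{Cliff}(C)\ge 2$ when $d=g$ and $\mr{Cliff}(C)\ge 1$ when $d=g+1$. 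Here I would invoke the classification of curves of small Clifford index: if $\mr{Cliff}(C)\le 1$ then $C$ is hyperelliptic, trigonal, or a smooth plane quintic. A hyperelliptic curve carries no very ample line bundle of degree $<2g-1$ (a very ample bundle on it must be non-special, since any special system with $h^0\ge 2$ is composed with the $g^1_2$ and its morphism factors through the double cover), which excludes $d=g,g+1$ for $g\ge 3$; and a trigonal curve, or the plane quintic, admits no very ample line bundle of degree $g$, because such a bundle would be $K_C$ minus a small special linear series, and Riemann--Roch shows it has exactly three sections, hence maps $C$ into $\P^2$ with non-smooth image. Thus the embedding $C\subset\P^N$ of degree $d$ forces the required bound on $\mr{Cliff}(C)$, and we conclude.

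For part (ii), for $g$ large a general line bundle $L$ of degree $2g-h$ on $C$ is non-special, so again by \cite[Theorem 1]{green1986projective} it is normally generated as soon as $\mr{Cliff}(C)\ge h+1$. If instead $\mr{Cliff}(C)\le h$, then $C$ has gonality at most $h+2$ (or lies on the short list of exceptional low-Clifford curves), and in that range I would establish the normal generation of the general degree-$(2g-h)$ line bundle directly, using the explicit description of special systems on curves of bounded gonality; this works once $g$ is large compared with the gonality. The threshold $g_h$ is then the maximum of the finitely many resulting genus bounds over the admissible gonalities $\le h+2$, and this case analysis is where the specific values $15,17,27,33$ emerge. (Part (i) is the analogous phenomenon in the two remaining degrees $2g-1$ and $2g$.)

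For the ``in particular'', when $d=g$ or $d=g+1$ Remark~\ref{rmk:ulrich-curve} guarantees that $C$ carries Ulrich line bundles, and by part (i) each one is a non-special projectively normal line bundle of degree $d+g-1$; when $d=g-h+1$ with $g\ge g_h$, part (ii) produces a non-special projectively normal line bundle of degree $2g-h=d+g-1$ (the ``general'' such bundle being, by Remark~\ref{rmk:open-ulrich}, even Ulrich). In every case $C$ supports a non-special projectively normal line bundle of degree $d+g-1$; since $B=\mc O_C(1)$ is very ample it furnishes a linear series $|V|\subseteq|B|$ étale onto its schematic image, so Proposition~\ref{prop:ulrich-proj-norm} applies and gives that the general rank-$r$ Ulrich bundle on $C$ is projectively normal.

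The steps I expect to be genuinely laborious are: (a) the Clifford-index bookkeeping in part (i) with $d=g$, i.e.\ verifying that trigonal curves and the plane quintic carry no very ample line bundle of degree $g$; and (b) in part (ii), pinning down the explicit genus bounds for the normal generation of the general degree-$(2g-h)$ line bundle on curves of bounded gonality, which is precisely where the numbers $g_h$ are produced.
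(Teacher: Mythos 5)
Your reduction of the rank-$r$ statement to the existence of a non-special projectively normal line bundle of degree $d+g-1$, fed into Proposition~\ref{prop:ulrich-proj-norm}, is exactly the paper's strategy, and your treatment of $d=g+1$ (exclude hyperelliptic, then apply \cite{green1986projective}) works. But the case $d=g$ of (i) contains a genuine error: it is \emph{not} true that a trigonal curve admits no very ample line bundle of degree $g$, so you cannot conclude $\mr{Cliff}(C)\ge2$ from the embedding. For a concrete counterexample, take $C\subset\P^1\times\P^1$ a smooth curve of class $3C_0+8f$; then $g(C)=14$, the ruling cuts a $g^1_3$ (so $\mr{Cliff}(C)=1$), and the restriction of the very ample bundle $\O(C_0+2f)$ embeds $C$ in $\P^5$ with degree $14=g$ (here $L=K_C-4T$ has $h^0=6$, so the claim that such a bundle has ``exactly three sections'' also fails; Riemann--Roch alone does not pin down $h^1(L)$, and even for the plane quintic ruling out $h^0=4$ needs a Castelnuovo-type argument, which you omit). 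Since for a trigonal curve $2g-1<2g+1-\mr{Cliff}(C)$, \cite[Theorem 1]{green1986projective} gives nothing for the Ulrich degree $2g-1$, and indeed trigonal curves do carry non-normally-generated non-special bundles of degree $2g-1$. The actual content of the paper's proof is that these exceptions, which by \cite[Corollary 1.6]{green1986projective} are exactly the bundles $K_C-E+D$ with $E\in W^1_3(C)$ and $D$ effective of degree $4$, can never be $B$-Ulrich for the given very ample $B$ of degree $g$: writing $D=\sum p_i$ one gets $h^1(C,\mc L)=h^0(C,B-D+E)>0$, contradicting non-specialness. Your route via a lower bound on $\mr{Cliff}(C)$ cannot be repaired, because the lemma genuinely includes trigonal curves.

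Part (ii) as written is a plan rather than a proof. The classification of failures of normal generation for non-special bundles of degree $2g-h$, $h=2,\dots,5$, together with the precise thresholds $g_h=15,17,27,33$, is exactly the content of the theorems of Kato--Keem--Ohbuchi \cite{kato1999normal} and Akahori \cite{akahori2004remarks} that the paper cites; proposing to rederive them ``directly, using the explicit description of special systems on curves of bounded gonality'' leaves the mathematically substantial step undone, and nothing in your sketch recovers the specific numbers in the statement. Moreover, even granting those classifications, one still must show that the exceptional bundles $K_C-cA+D$ (with $A\in W^j_a$, $D\in W_b$) fill up only a proper closed subset of $\Pic^{2g-h}(C)$: the paper does this by bounding the dimension of an incidence correspondence by $a-2j-1+b<g_h\le g$ using \cite[Theorem IV.5.1]{arbarello1985geometry}, and your proposal only gestures at this genericity step. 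A small additional remark: your auxiliary claim that a hyperelliptic curve has no very ample line bundle of degree $<2g-1$ is false as stated (on a genus-$5$ hyperelliptic curve the general bundle of degree $8$ is very ample), although the argument you actually give (very ample on a hyperelliptic curve forces non-special, hence $h^0\le2$ in degree $\le g+1$) does suffice for the exclusion you need, as in Remark~\ref{rmk:hyperelliptic}.
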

	
	Before seeing the proof, we make a couple of simple observations.
	
	\begin{rmk}\label{rmk:hyperelliptic}
		A smooth projective curve $C\subset\P^N$ of genus $g\ge 2$ and degree $d\le g+1$ is neither hyperelliptic nor elliptic-hyperelliptic, i.e. a double cover of an elliptic curve. 
		
		Indeed, very ample line bundles on smooth hyperelliptic curves of genus $g\ge2$ have degree $d>g+1$ by \cite[Theorem 3.1(3)]{park2008complete}, and a very ample line bundle $L$ on an elliptic-hyperelliptic curve must have $h^1(C,L)\le 1$ \cite[(5)]{martens2012remark} which cannot happen if $\deg(L)\le g+1.$
	\end{rmk}
	
	\begin{proof}[Proof of Lemma \ref{lem:kko}]
		By Proposition \ref{prop:ulrich-proj-norm} we only need to prove (i)-(ii). We observe also that $C$ can be neither hyperelliptic nor elliptic-hyperellitpic (Remark \ref{rmk:hyperelliptic}).
		
		Let's show (i). If $d=g+1,$ all Ulrich line bundle have degree $2g$ and so projectively normal by \cite[Corollary 1.4]{green1986projective}. \\
		Assume $d=g$ and let $H\subset C$ be a hyperplane section. We first show that if $d=g=6,$ then $C$ cannot be a plane quintic. Assuming the contrary, write $H=p_1+\cdots+p_6.$ Since there are no smooth curves of genus $6$ and degree $6$ in $\P^3$ \cite[Example IV.6.4.2]{hartshorne2013algebraic}, we must have $$h^0(C,K_C-p_1-\cdots-p_6)=h^1(C,H)=h^0(C,H)-6-1+6=h^0(C,H)-1\ge 4.$$ However, $h^0(C, K_C-q_1-q_2-q_3)=3$ for every $q_1,q_2,q_3\in C$: the effective divisor $D=q_1+q_2+q_3$ must have $h^0(C,D)=1,$ for otherwise we would get a $g^1_3$ on a plane quintic,  hence Riemann-Roch gives the claim. \\
		Now, $C$ is not a plane quintic and we immediately see that $N\ge 3.$ Using Castelnuovo theorem \cite[Theorem IV.6.4]{hartshorne2013algebraic}, we get $d=g\ge8.$ In this case, an Ulrich line bundle $\mc{L}$ has degree $\deg(\mc{L})=2g-1.$  Thus \cite[Corollary 1.6]{green1986projective} tells that $\mc{L}$ fails to be projectively normal if and only if $C$ is trigonal and $\mc{L}=K_C-E+D$ for an effective divisor $D$ of degree $4$ and $E\in W^1_3(C).$ However this cannot happen for otherwise 
		\[
		h^1(C,L)=h^1(C,K_C-H+D-E)=h^0(C,H-D+E)>0.
		\]
		(Indeed, writing $D=\sum_{i=1}^4p_i$ for some points $p_i\in C$ and using that $H$ is very ample with $h^0(C,H)\ge 4$ and $E\in W^1_3(C),$ we have 
		\[
		h^0(C,H-p_1-p_2-p_3)\ge1,\hspace{1cm} h^0(C,E-p_4)\ge 1,
		\] proving the claim.) Therefore $\mc{L}$ is projectively normal even in this case. This proves (i).
		
		For (ii), we are going to use \cite[Theorems 3.1--3.2]{kato1999normal} and \cite[Theorems 3.1--3.2]{akahori2004remarks}: it is proved that a non-special line bundle $L$ of degree $2g-h$ on a curve of genus $g\ge g_h,$ for $h=2,3,4,5,$ is not projectively normal if and only if either $C$ has a certain gonality or is a covering of a certain curve and  $$L=K_C-c A+ D$$ where  $A\in W^j_a= W^j_a(C),$ $D\in W_b=W^0_b(C)$ and with $1\le c\le 5,$ $j=1,2,$ $3\le a\le 9<g,$ $a\ge 2j,$ $2\le b\le 12<g$ such that $2g-2-c\cdot a+b=2g-h$ given in the Theorems in \emph{loc. cit.} This says that non-special non-projectively normal line bundles are contained in the image of the incidence correspondence
		\[
		\begin{tikzcd}
			& I^{j,h}_{a,b}=\left\{\left.(A, L)\in W^j_a\times\Pic^{2g-h}(C)\ \right|\ L=K_C-c A+ D\ \mbox{for some}\ D\in W_b\right\}  \\
			W_a^j && \Pic^{2g-h}(C)
			\arrow[from=1-2, to=2-1, "\pi_1"']
			\arrow[from=1-2, to=2-3, "\pi_2"]
		\end{tikzcd}\
		\]
		through the projection $\pi_2.$ The fibre of $\pi_1$ over each $A\in W^j_a$ can be identified with the image of $W_b$ under the multiplication map $\Pic^b(C)\xrightarrow{\cong}\Pic^{2g-h}(C)$ by $K_C-cA.$ As seen in Remark \ref{rmk:non-general-divisor}, this set is closed of dimension $b.$ Hence, by \cite[Exercise II.3.22(b)]{hartshorne2013algebraic} and \cite[Theorem IV.5.1]{arbarello1985geometry}, we obtain the bound $$\dim \pi_2(I^{j,h}_{a,b})\le \dim I^{j,h}_{a,b}\le\dim W^j_a+\dim W_b\le a-2j-1+b.$$ By \cite[Theorems 3.1--3.2]{kato1999normal} and \cite[Theorems 3.1--3.2]{akahori2004remarks} we have:
		\begin{itemize}
			\item $(j,a,b)=(1,3,6), (1,4,4)$ when $h=2$;
			\item $(j,a,b)=(1,3,8), (1,4,3),(1,5,4)$ when $h=3$;
			\item $(j,a,b)=(1,3,10),(1,4,6), (1,5,3), (1,6,4), (2,8,6)$ when $h=4$;
			\item $(j,a,b)=(1,3,12), (1,4,5), (1,5,2), (1,6,3), (1,7,4), (2,8,5), (2,9,6)$ when $h=5.$ 
		\end{itemize} For each $(h,j,a,b)$ listed above, we see that $\dim\pi_2(I^{j,h}_{a,b})<g_h\le g.$ Therefore we obtain the assertion in (ii).
	\end{proof}
	
	Thanks to the proof of the Maximal Rank Conjecture for non-special curves \cite{ballico1987maximal}, we can find a lower bound for the projective normality of the general Ulrich bundle on a general curve. Moreover this is sharp for Ulrich line bundles.
	
	\begin{prop}\label{prop:ulrich-proj-norm-general}
		On a general curve of genus $g\ge3,$ the general rank $r$ Ulrich bundle defined with respect to a general very ample polarization of degree $$d\ge \frac{3+\sqrt{8g+1}}{2}$$ is projectively normal. Moreover, this bound is sharp for $r=1.$
	\end{prop}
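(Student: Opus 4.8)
The plan is to reduce the rank-$r$ statement to one about line bundles via Proposition~\ref{prop:ulrich-proj-norm}, then to produce the needed line bundle from the degree-$2$ case of the Maximal Rank Conjecture together with Lemma~\ref{lem:alzati-russo}; the sharpness will come out as an immediate dimension count.

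\emph{Sufficiency.} Since $B$ is very ample, $|B|$ itself is an embedding, so the hypothesis of Proposition~\ref{prop:ulrich-proj-norm} on the existence of an étale linear subsystem holds automatically, and it suffices to show that the general curve $C$ carries a non-special, projectively normal line bundle of degree $e:=d+g-1$. I would take $L$ to be a general line bundle of degree $e$. From $d\ge\tfrac{3+\sqrt{8g+1}}{2}\ge 4$ (valid for $g\ge 3$) one gets $e\ge g+3$, so $L$ is non-special with $h^0(C,L)=d$ and, being general of degree at least $g+3$, very ample; it embeds $C\hookrightarrow\P^{\,d-1}$. Moreover $\deg(2L)=2e\ge 2g-1$, hence $h^0(C,2L)=2e-g+1=2d+g-1$. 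Now I would invoke the Maximal Rank Conjecture for general non-special curves \cite{ballico1987maximal}: the map $\Sym^2 H^0(C,L)\to H^0(C,2L)$ has maximal rank, so it is surjective precisely when $\binom{d+1}{2}=\dim\Sym^2H^0(C,L)\ge h^0(C,2L)=2d+g-1$. Since $\binom{d+1}{2}-\binom{d-1}{2}=2d-1$, this inequality is equivalent to $\binom{d-1}{2}\ge g$, i.e.\ exactly to the hypothesis $d\ge\tfrac{3+\sqrt{8g+1}}{2}$. Thus $L$ is $2$-normal; as it is also very ample with $H^0(C,K_C-L)=H^1(C,L)=H^2(C,\O_C)=0$ (the first two by non-speciality of $L$), Lemma~\ref{lem:alzati-russo} yields that $L$ is projectively normal. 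Feeding this into Proposition~\ref{prop:ulrich-proj-norm} gives projective normality of the general rank-$r$ $B$-Ulrich bundle.

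\emph{Sharpness for $r=1$.} A $B$-Ulrich line bundle $\mathcal L$ is $0$-regular (Remark~\ref{rmk:ulrich-facts}), hence non-special of degree $e=d+g-1$ with $h^0(C,\mathcal L)=d$, while $\deg(2\mathcal L)\ge 2g-1$ forces $h^0(C,2\mathcal L)=2d+g-1$. If $\mathcal L$ were projectively normal it would be $2$-normal, forcing $\binom{d+1}{2}=\dim\Sym^2H^0(C,\mathcal L)\ge 2d+g-1$, i.e.\ $\binom{d-1}{2}\ge g$, i.e.\ $d\ge\tfrac{3+\sqrt{8g+1}}{2}$. Hence for $r=1$ the bound cannot be weakened.

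The only substantial ingredient is the Maximal Rank Conjecture in the non-special range — and only its degree-$2$ instance is needed — so I expect that to be the real input to cite, after which the sufficiency collapses to the single dimension inequality above, matched exactly by the necessary condition when $r=1$. The remaining points that need care are checking the hypotheses of Proposition~\ref{prop:ulrich-proj-norm} and Lemma~\ref{lem:alzati-russo} for the chosen $L$ (notably that a general line bundle of degree $\ge g+3$ is very ample, and that $K_C-L$ is non-effective), and the passage from the Ulrich line bundle to the arbitrary-rank Ulrich bundle, which is already built into Proposition~\ref{prop:ulrich-proj-norm}.
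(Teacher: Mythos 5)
Your proposal is correct and takes essentially the same route as the paper: reduction to the rank-one case via Proposition \ref{prop:ulrich-proj-norm}, the degree-two instance of the Ballico--Ellia maximal rank theorem combined with Lemma \ref{lem:alzati-russo} and the dimension count $\binom{d+1}{2}\ge 2d+g-1$ (equivalently $\binom{d-1}{2}\ge g$), and sharpness for $r=1$ by that same count. The only harmless deviations are that you apply the maximal rank theorem to a general line bundle of degree $d+g-1$ instead of transferring it to the general Ulrich line bundle via openness in $\Pic^{d+g-1}(C)$, and your sharpness argument is a pure dimension count showing that \emph{every} Ulrich line bundle fails $2$-normality below the bound, which is if anything slightly stronger than needed.
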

	\begin{proof}
		We reduce to the case $r=1$ thanks to Proposition \ref{prop:ulrich-proj-norm}. By \cite[Theorem]{ballico1987maximal} (and references therein), on a general curve $C$ of genus $g\ge3$ the general non-special very ample line bundle $L$ of degree $u=g+(d-1)$ inducing an embedding $\phi_L\colon X\hookrightarrow\P^{d-1},$ for $d\ge \frac{3+\sqrt{8g+1}}{2}\ge4,$ satisfies the Maximal Rank Conjecture (MRC), i.e. the restriction map
		\[
		r_{L,k}\colon \S^kH^0(C,L)\to H^0(C,kL)
		\] has maximal rank for all $k\ge1.$ Since a general line bundle $H$ of degree $d$ is very ample \cite[Theorem 5.1.2]{eisenbud1983divisors} and $H$-Ulrich line bundles form an open subset in $\Pic^u(C)$ \cite[(4.1.3)]{coskun2017survey}, we infer that the general $H$-Ulrich line bundle $\mc{L}$ satisfies MRC. Now, Lemma \ref{lem:alzati-russo} says that $\mc{L}$ is projectively normal if and only if it is $2$-normal if and only if $r_{\mc{L},2}=r_2$ is surjective. Given that $r_2$ has maximal rank, this holds if and only if $$\dim \S^2H^0(C,\mc{L})=\binom{d+1}{2}\ge h^0(C,2\mc{L})=2u+1-g=2d+g-1,$$ where the term on the right is easily computed via Riemann-Roch theorem. The assertion is now clear.
	\end{proof}
	
	\begin{rmk}
		Observe that the same bound can be obtained using \cite[Theorem 1]{ballico2010normally} (together with Proposition \ref{prop:ulrich-proj-norm}).
	\end{rmk}
	
	We conclude the section with some remarks on the $(N_p)$ property for vector bundles.
	
	\begin{defi}
		Let $X$ be a projective variety and let $L$ be a very ample line bundle defining an embedding $$\phi_L\colon X\to\P(H^0(X,L))=\P^N.$$ Then $R(L)$ admits a minimal graded free resolution $E_\bullet$ as graded $R_X$-module:
		\[
		\cdots\longrightarrow E_2=\bigoplus_j R_X(-a_{2,j})\longrightarrow E_1=\bigoplus_j  R_X(-a_{1,j})\longrightarrow E_0=R_X\oplus \bigoplus_j R_X(-a_{0,j})\longrightarrow R(L)\longrightarrow0.
		\] We say that $L$ satisfies the \emph{Property} $(N_p)$ if $E_0=R_X$ and $a_{i,j}=i+1$ for all $j$ whenever $1\le i\le p.$ In particular, $L$ satisfies $(N_0)$ if and only if $L$ is normally generated. A vector bundle $\E$ on $X$ satisfies the \emph{$(N_p)$ property} if $\OPE(1)$ does it on $\P(\E).$ 
	\end{defi}
	
	\begin{prop}\label{prop:N1-curve}
		Let $C$ be a smooth projective curve of genus $g\ge0$ and let $B$ be a globally generated ample line bundle on $C$ of degree $d>0.$ Let $\E$ be a $B$-Ulrich bundle on $C.$ Then:
		\begin{itemize}
			\item[(1)] $\E$ satisfies $(N_1)$ and $\OPE(1)$ is Koszul if $d>g+2.$
			\item[(2)] $\E$ satisfies $(N_p)$ for $p\ge 2$ if $d>\frac{1}{2} \left((g+p+1)+\sqrt{g^2+2g(3p+1)+(p-1)^2}\right).$
		\end{itemize}
	\end{prop}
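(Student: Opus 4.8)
The plan is to reduce the statement to a cohomological vanishing on $\P(\E)$ and then transfer it to $C$ via the relative Euler sequence, where the semistability of $\E$ and of its syzygy bundle $M_\E$ settle the numerics. One cannot pass to a sectional curve as in Remark \ref{rmk:sectional-curve-vector-bundle}: $\P(\E)$ is irregular ($H^1(\O_{\P(\E)})\cong H^1(\O_C)$) and its general hyperplane sections are not linearly normal, so the usual Lefschetz-type reductions do not apply and the computation must be carried out on $\P(\E)$ itself.

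First I would invoke the standard syzygy criterion (Green): since $\OPE(1)$ is globally generated and, by Corollary \ref{cor:ulrich-proj-norm-curve} (both hypotheses force $d>g+1$), normally generated, it suffices to prove
\[
H^1\!\left(\P(\E),\ \bigwedge^{j}M_{\OPE(1)}\otimes\OPE(m)\right)=0\qquad\text{for all }1\le j\le p+1\text{ and }m\ge1 .
\]
The relative Euler sequence $0\to\Omega_{\P(\E)/C}(1)\to\pi^\ast\E\to\OPE(1)\to0$, together with the factorization of the evaluation map of $\OPE(1)$ through $\pi^\ast(\mathrm{ev}_\E)$, yields $0\to\pi^\ast M_\E\to M_{\OPE(1)}\to\Omega_{\P(\E)/C}(1)\to0$ on $\P(\E)$. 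Hence $\bigwedge^{j}M_{\OPE(1)}$ admits a filtration with graded pieces $\pi^\ast\!\big(\bigwedge^{a}M_\E\big)\otimes\bigwedge^{j-a}\!\big(\Omega_{\P(\E)/C}(1)\big)$ for $0\le a\le j$, so it is enough to kill the $H^1$ of each of these twisted by $\OPE(m)$.

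Next I would push everything down to $C$. Since $m\ge1$, the relative Bott formula gives $R^{q}\pi_\ast\big(\bigwedge^{j-a}\Omega_{\P(\E)/C}(m+j-a)\big)=0$ for $q\ge1$, while $\pi_\ast$ of this sheaf is a Schur power $\mathbb{S}_\lambda\E$ with $|\lambda|=m+j-a$; as $\E$ is semistable and we work in characteristic $0$, $\mathbb{S}_\lambda\E$ is semistable of slope $(m+j-a)(d+g-1)$. By the projection formula and Leray the group above becomes $H^1\!\big(C,\ \bigwedge^{a}M_\E\otimes\mathbb{S}_\lambda\E\big)$. Here $M_\E$ has rank $r(d-1)$ and degree $-r(d+g-1)$, hence slope $-\tfrac{d+g-1}{d-1}$ (independently of $r$), and it is semistable by Butler's theorem \cite{butler1994normal}, since $\mu^-(\E)=\mu(\E)=d+g-1\ge2g+1$ in the range considered; exterior powers, Schur powers and tensor products of semistable bundles stay semistable in characteristic $0$, so $\bigwedge^{a}M_\E\otimes\mathbb{S}_\lambda\E$ is semistable of slope $(d+g-1)\big((m+j-a)-\tfrac{a}{d-1}\big)$. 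Its $H^1$ vanishes as soon as this slope exceeds $2g-2$, and the extreme case $m=1$, $a=j=p+1$ reads $(d+g-1)(d-p-2)>(2g-2)(d-1)$, which is exactly the bound in (2) — and, for $p=1$, is exactly $d>g+2$ in (1). As this condition does not involve $r$, it handles all ranks at once, and $(N_p)$ follows (the case $g=0$ being trivial, since then $\P(\E)$ has minimal degree).

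Finally, for the Koszul assertion in (1): when $r=1$ it is the classical fact that a line bundle of degree $\ge2g+2$ on a curve is Koszul (here $\deg\OPE(1)=d+g-1\ge2g+2$), and for $r\ge2$ I would deduce it from the corresponding statement for tautological bundles, either via \cite{raychaudhury2022positivity} or by a Koszul-filtration argument driven by the same semistability estimates. I expect the main work to lie in the transfer from $\P(\E)$ to $C$ — carrying out the relative Euler/Bott reduction cleanly and checking that the push-forwards $\pi_\ast$ are the asserted semistable Schur powers — together with supplying the extra input needed for the Koszul claim when $r\ge2$.
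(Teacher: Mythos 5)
Your handling of the $(N_p)$ assertions is essentially correct, but it is a genuinely different route from the paper's: the paper does not reprove anything on $\P(\E)$ — after checking that $R^i\pi_\ast\OPE(-i)=0$ for $i>0$ (so that $\OPE(1)$ is $(-1)$-regular with respect to $\pi$), it quotes \cite[Theorem 6.1]{butler1994normal} for the Koszul property (hence $(N_1)$) and \cite[Theorem 1.3]{park2006higher} for $(N_p)$, $p\ge2$. What you propose is, in effect, a self-contained proof of the Park--Butler statement: the criterion $H^1(\P(\E),\L^jM_{\OPE(1)}\o\OPE(m))=0$ for $1\le j\le p+1$, $m\ge1$ (together with normal generation, available from Corollary \ref{cor:ulrich-proj-norm-curve} since both hypotheses force $d>g+1$), the exact sequence $0\to\pi^\ast M_\E\to M_{\OPE(1)}\to\Omega_{\P(\E)/C}(1)\to0$, the filtration of $\L^jM_{\OPE(1)}$, the Bott-type pushforwards to (hook) Schur powers of $\E$, and the semistability of $M_\E$ (Butler) and of Schur powers in characteristic zero are all correct, and your worst-case inequality $(d+g-1)(d-p-2)>(2g-2)(d-1)$ is indeed equivalent to $d^2-(g+p+1)d-p(g-1)>0$, i.e.\ to the bound in (2). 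One small imprecision: for $p=1$ this is not \emph{exactly} $d>g+2$ as a real-number bound, but it is implied by the integer condition $d\ge g+3$, so (1)'s $(N_1)$ part and (2) do follow from your slope estimate.

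The genuine gap is the Koszul assertion in (1), which is part of the statement and which you explicitly leave open for $r\ge2$. Your vanishing argument gives $(N_1)$ but not Koszulness, which is a strictly stronger, all-degrees condition on the section ring of $\OPE(1)$, and \cite{raychaudhury2022positivity} is about Koszulness of the polarization on the base, not of the tautological bundle, so it does not supply the missing input. The paper closes exactly this point by the relative $(-1)$-regularity observation above and Butler's Theorem 6.1, which asserts Koszulness of $\OPE(1)$ under the slope hypothesis coming from $d>g+2$; your suggested ``Koszul-filtration argument driven by the same semistability estimates'' is in the right spirit (it is how Butler proves that theorem), but as written it is not carried out, so statement (1) is only partially proven in your proposal.
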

	\begin{proof}
		In both situations we have $d>g+1,$ therefore $\E$ is very ample \cite[Theorem 1]{lopez2023geometrical} and semistable with $\mu(\E)=d+g-1$ (Remark \ref{rmk:ulrich-facts}). We also know from Corollary \ref{cor:ulrich-proj-norm-curve} that $\E$ is projectively normal. Letting $\pi\colon \P(\E)\to C$ be the natural projection, through \cite[Exercise III.8.4(a)]{hartshorne2013algebraic} it's immediate to see that $$R^i\pi_\ast\left(\OPE(1)(-1-i)\right)=R^i\pi_\ast\OPE(-i)=0\ \text{for}\ i>0.$$ In other words, $\OPE(1)$ is $(-1)$-regular with respect to $\pi$ in the sense of \cite[Example 1.8.24]{lazarsfeld2017positivity}, or  \cite[§3]{butler1994normal}. Then, if $d>g+2,$ \cite[Theorem 6.1]{butler1994normal} tells that $\OPE(1)$ is Koszul. Since a very ample Koszul line bundle satisfies $(N_1),$ see \cite[Remark 5.2]{butler1994normal} or \cite[Remark 7]{hering2010multigraded}, this proves (1). Finally, (2) is just a rephrasing of \cite[Theorem 1.3]{park2006higher}.
	\end{proof}
	
	\begin{proof}[Proof of Theorem \ref{thm:proj-norm-curve}]
		This is just a recollection of the facts proved in Corollary \ref{cor:ulrich-proj-norm-curve} and Propositions \ref{prop:ulrich-proj-norm}--\ref{prop:ulrich-proj-norm-general}--\ref{prop:N1-curve}
	\end{proof}
	
	Analogously to Green theorem for line bundles on curves \cite[Theorem 1.8.53]{lazarsfeld2017positivity} and to Corollary \ref{cor:ulrich-proj-norm-curve}, the expectation is that an Ulrich bundle on a smooth projective curve of degree $d$ and genus $g$ satisfies $(N_p)$ as soon as $d>g+1+p.$
	
	\section{Projective normality of Ulrich bundles bundles on some surfaces}
	We study the projective normality of $0$-regular vector bundles on smooth regular embedded surfaces. We use a very ample polarization because all the tensor powers remain $0$-regular (Corollary \ref{cor:regular-symmetric}). 
	
	\begin{lemma}\label{lem:proj-norm-0-regular-surface}
		Let $\E$ be a $0$-regular vector bundle on a smooth projective surface $S\subset\P^N.$ Then:
		\begin{itemize}
			\item[(i)]  $\E$ is $k$-normal for all $k\ge 4$;
			\item[(ii)] $\E$ is $3$-normal if $p_g(S)=0$;
			\item[(iii)] If $q(S)=0,$ $\E$ is $2$-normal if and only if $h^2(S,\L^2M_\E)=p_g(S)\binom{h^0(S,\E)}{2}$ if and only if the map $$\L^2H^0(S,\E)^\ast\o H^0(S,K_S)\to H^0(S,\L^2M_\E^\ast\o K_S)$$ is surjective (or, equivalently, an isomorphism). 
		\end{itemize}
	\end{lemma}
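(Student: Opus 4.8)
The plan is to route everything through the syzygy sequence $0\to M_\E\to W\o\O_S\to\E\to0$, where $W:=H^0(S,\E)$, and the divided resolutions it produces via Lemma \ref{lem:fact}. Two structural inputs recur: $\dim S=2$, so $H^i(S,-)=0$ for $i\ge3$; and, since $\O_S(1)$ is very ample, Corollary \ref{cor:regular-symmetric} makes every $\S^j\E$ with $j\ge1$ a $0$-regular sheaf, whence $H^1(S,\S^j\E)=H^2(S,\S^j\E)=0$. Plugging the syzygy sequence into (\ref{eq:fact1}) gives, for each $k\ge1$, the exact complex
$$0\to\L^kM_\E\to\L^kW\o\O_S\to\L^{k-1}W\o\E\to\cdots\to\L^2W\o\S^{k-2}\E\to W\o\S^{k-1}\E\to\S^k\E\to0,$$
which I break into short exact sequences with kernel bundles $\mc{K}_j:=\ker\bigl(\L^jW\o\S^{k-j}\E\to\L^{j-1}W\o\S^{k-j+1}\E\bigr)$ for $1\le j\le k-1$ (and $\mc{K}_k=\L^kM_\E$). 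Since $H^1(W\o\S^{k-1}\E)=0$, the multiplication $H^0(\E)\o H^0(\S^{k-1}\E)\to H^0(\S^k\E)$ is surjective precisely when $H^1(\mc{K}_1)=0$; by Remark \ref{rmk:proj-norm} the conjunction of these one-step surjectivities over all $k$ is equivalent to projective normality, so the lemma reduces to controlling $H^1(\mc{K}_1)$ rung by rung.

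For (i) and (ii) I climb the chain. From $0\to\mc{K}_2\to\L^2W\o\S^{k-2}\E\to\mc{K}_1\to0$ and the $0$-regularity of $\S^{k-2}\E$ (available as soon as $k\ge3$), $H^1(\mc{K}_1)\hookrightarrow H^2(\mc{K}_2)$. From $0\to\mc{K}_3\to\L^3W\o\S^{k-3}\E\to\mc{K}_2\to0$ and $H^3(\mc{K}_3)=0$, the group $H^2(\mc{K}_2)$ is a quotient of $\L^3W\o H^2(\S^{k-3}\E)$. If $k\ge4$ then $\S^{k-3}\E$ is $0$-regular and this group vanishes, so $H^1(\mc{K}_1)=0$ with no further hypothesis — this gives (i). If $k=3$ the obstruction is only a quotient of $\L^3W\o H^2(\O_S)$, which dies as soon as $p_g(S)=0$ — this gives (ii).

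For (iii) I take $k=2$: the resolution is $0\to\L^2M_\E\to\L^2W\o\O_S\to W\o\E\to\S^2\E\to0$ and $\mc{K}_1=\ker(W\o\E\to\S^2\E)$ fits in $0\to\L^2M_\E\to\L^2W\o\O_S\to\mc{K}_1\to0$. Now $q(S)=0$ kills $H^1(\L^2W\o\O_S)=\L^2W\o H^1(\O_S)$, so the long exact sequence identifies $H^1(\mc{K}_1)$ with $\ker\bigl(\alpha\colon H^2(\L^2M_\E)\to\L^2W\o H^2(\O_S)\bigr)$; hence $\E$ is $2$-normal iff $\alpha$ is injective iff its Serre transpose
$$\alpha^\ast\colon\L^2H^0(S,\E)^\ast\o H^0(S,K_S)\longrightarrow H^0(S,\L^2M_\E^\ast\o K_S)$$
is surjective. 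The heart of (iii) is that $\alpha^\ast$ is always injective: dualizing the syzygy sequence to $0\to\E^\ast\to W^\ast\o\O_S\to M_\E^\ast\to0$ and feeding it into (\ref{eq:fact3}) with $k=2$ gives $0\to\S^2\E^\ast\to\E^\ast\o W^\ast\o\O_S\to\L^2W^\ast\o\O_S\to\L^2M_\E^\ast\to0$; twisting by $K_S$ and splitting off $0\to Q\to\L^2W^\ast\o K_S\to\L^2M_\E^\ast\o K_S\to0$, one finds $H^0(\E^\ast\o W^\ast\o K_S)=W^\ast\o H^2(\E)^\ast=0$ and $H^1(\S^2\E^\ast\o K_S)=H^1(\S^2\E)^\ast=0$ from the $0$-regularity of $\E$ and of $\S^2\E$, so $H^0(Q)=0$ and $\alpha^\ast$ is injective. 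Therefore $\E$ is $2$-normal iff $\alpha^\ast$ is an isomorphism iff $h^0(S,\L^2M_\E^\ast\o K_S)=\binom{h^0(S,\E)}{2}p_g(S)$, which by Serre duality is the asserted equality $h^2(S,\L^2M_\E)=p_g(S)\binom{h^0(S,\E)}{2}$.

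The double cohomology chase is routine once one tracks which $\S^j\E$ has to be $0$-regular at each rung — this is exactly where, and only where, the hypotheses $p_g(S)=0$ and $q(S)=0$ are spent. The real obstacle is the injectivity of the transpose $\alpha^\ast$ in (iii): it cannot be extracted from the first resolution and forces the use of the second resolution (\ref{eq:fact3}), of the $0$-regularity of $\S^2\E$ (not merely of $\E$), and of Serre duality on $S$.
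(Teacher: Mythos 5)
Your proof is correct and follows essentially the same route as the paper: the Koszul-type resolution (\ref{eq:fact1}) of $\S^k\E$ built from the syzygy sequence, the $0$-regularity of the symmetric powers (Corollary \ref{cor:regular-symmetric}) to kill $H^1$ of the first term and $H^2$ of the second, the vanishing of $H^i$ for $i\ge3$ on a surface, and, for (iii), the identification under $q(S)=0$ of the obstruction with $\ker\bigl(H^2(S,\L^2M_\E)\to\L^2H^0(S,\E)\o H^2(S,\O_S)\bigr)$, transposed by Serre duality (your reduction of $k$-normality to the one-step surjectivity of $H^0(S,\E)\o H^0(S,\S^{k-1}\E)\to H^0(S,\S^k\E)$ is the same step the paper performs via Remark \ref{rmk:cohomology-serre}, so no complaint there). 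The only point where you diverge is the ``automatic'' half of (iii): the paper gets the surjectivity of $H^2(S,\L^2M_\E)\to\L^2H^0(S,\E)\o H^2(S,\O_S)$ in one line from the first resolution, because $H^2(S,K)$, with $K=\ker\bigl(H^0(S,\E)\o\E\to\S^2\E\bigr)$, is a quotient of $H^1(S,\S^2\E)=0$; you instead dualize, feed the dual syzygy sequence into (\ref{eq:fact3}) and apply Serre duality together with $H^2(S,\E)=H^1(S,\S^2\E)=0$. The two arguments spend exactly the same vanishings and are equivalent in content, so your closing assertion that this step ``cannot be extracted from the first resolution'' is not accurate --- it can, and more directly; your dual route is a harmless, slightly longer variant.
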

	
	Observe that this gives another proof for the fact, already know from Proposition \ref{prop:alzati-russo}, that a $2$-normal ample $0$-regular vector bundle on an embedded smooth projective surface with $p_g=0$ is automatically projectively normal.
	
	\begin{proof}
		We know from (\ref{eq:fact1}) in Lemma \ref{lem:fact} that the syzygy exact sequence $0\to M_\E\to V\o\O_S\to \E\to0,$ where $V=H^0(S,\E),$ yields the long exact sequence $$0\to \L^kM_\E\to\L^kV\o\O_S\to\L^{k-1}V\o\E\to\cdots\to \L^{2}V\o\S^{k-2}\E\to V\o\S^{k-1}\E\to\S^k\E\to0$$ 
		for every $k\ge 2.$ By Remark \ref{rmk:cohomology-serre}, to prove (i) we have to show the surjectivity on global section of the rightmost map.  For $k\ge 4,$ the vanishings  $$H^1(S,\L^2V\o\S^{k-2}\E)\cong \L^2V\o\ H^1(S,\S^{k-2}\E)=0, H^2(S,\L^3V\o\S^{k-3}\E)\cong \L^3V\o H^2(S,\S^{k-3}\E)=0,$$ which follow from the $0$-regularity of the symmetric power of $\E$ (Corollary \ref{cor:regular-symmetric}), together with $$H^i(S,\L^{i+1}V\o\S^{k-i-1}\E)\cong\L^{i+1}V\o H^i(S,\S^{k-i-1}\E)=0\ \text{for}\ i\ge 3,$$ give the claim by \cite[Example B.1.3]{lazarsfeld2017positivity}. 
		
		Now, assuming $p_g(S)=0,$ for $k=3$ we similarly have  $$H^1(S,\L^2V\o\E)\cong \L^2V\o H^1(S,\E)=0, H^2(\L^3V\o\O_S)\cong \L^3H^0(S,\E)\o H^2(S,\O_S)=0$$ and  $H^3(S,\L^3M_\E)=0.$ Item (ii) is obtained in the same way.
		
		Assume $q(S)=0,$  so that $H^1(S,M_\E)=0,$  then consider $k=2$ in the above exact sequence and let $K=\ker(V\o \E\to \S^2\E).$ The cohomology of the corresponding exact sequence immediately shows that $H^2(S,K)\cong H^1(S,\S^2\E)=0$ and that $\E$ is $2$-normal if and only if $H^1(S,K)=0.$ On the other hand, as $K=\mathrm{Im}(\L^2M_\E\to\L^2V\o\O_S),$ we get the exact sequence of vector spaces $$0\to H^1(S,K)\to H^2(S,\L^2M_\E)\xrightarrow{f} \L^2V\o H^2(S,\O_S)\to H^2(S,K)=0.$$ Thus, $f$ being surjective, $H^1(S,K)=0$ if and only if $f$ is injective if and only if $f$ is an isomorphism if and only if $h^2(S,\L^2M_\E)=p_g(S)\cdot \dim \L^2V.$ Taking the dual map of $f$ and using  Serre duality, we see that $f$ is injective if and only if $$\L^2V^\ast\o H^0(S,K_S)\to H^0(S,\L^2M_\E^\ast
		\o K_S)$$ is surjective.  This proves (iii).
	\end{proof}
	
	In virtue of this, we will mostly focus on regular smooth surfaces $S$ possibly with $p_g(S)=0.$
	
	Let's see what happens when we consider $0$-regular locally free sheaves with aCM projectivized bundle.
	
	\begin{prop}\label{prop:0-regular-aCM-surface}
		Let $S\subset\P^N$ be smooth projective surface and let $\E$ be a very ample $0$-regular bundle on $S$ of rank $r\ge2.$ Then:
		\begin{itemize}
			\item[(1)] If $\P(\E)$ is aCM, then $q(S)=p_g(S)=0.$
			\item[(2)] If $q(S)=p_g(S)=0$ and $\E$ is $2$-normal, then $\P(\E)$ is aCM, $\I_{\P(\E)/\P(H^0(S,\E))}$ is $3$-regular and $I_{\P(\E)/\P(H^0(S,\E))}$ is generated in degree less than or equal to $3.$
		\end{itemize}
	\end{prop}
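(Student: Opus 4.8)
My plan is to settle (1) immediately and to deduce (2) from Proposition \ref{prop:alzati-russo} together with a cohomology computation on $\P(\E)$ carried out along the projection $\pi\colon\P(\E)\to S$, whose fibres are $\P^{r-1}$. Throughout put $M=h^0(S,\E)-1$, so that $|\OPE(1)|$ realizes $\P(\E)\subset\P^M=\P(H^0(S,\E))$, and note $\dim\P(\E)=r+1\ge3$ since $r\ge2$. For (1): if $\P(\E)$ is aCM then by Remark \ref{rmk:acm-variety} one has $H^i(\P^M,\I_{\P(\E)/\P^M}(k))=0$ for $1\le i\le r+1$ and all $k\in\Z$; feeding $k=0$ into $0\to\I_{\P(\E)/\P^M}\to\O_{\P^M}\to\O_{\P(\E)}\to0$ and using $H^1(\P^M,\O_{\P^M})=H^2(\P^M,\O_{\P^M})=0$ gives $H^1(\P(\E),\O_{\P(\E)})\cong H^2(\P^M,\I_{\P(\E)/\P^M})=0$ and $H^2(\P(\E),\O_{\P(\E)})\cong H^3(\P^M,\I_{\P(\E)/\P^M})=0$, which by Remark \ref{rmk:cohomology-serre} is exactly $q(S)=p_g(S)=0$.

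For (2), the first move is to apply Proposition \ref{prop:alzati-russo} with $Y=S$ and $m=2$: all its hypotheses hold — $H^2(S,\O_S)=p_g(S)=0$, the bundle $\E$ is very ample and $2$-normal by assumption, $H^1(S,\E)=0$ because a $0$-regular sheaf is $1$-regular, and $H^0(S,(K_S+\det\E)\o\S^{m-3}\E)=0$ vacuously since $\S^{-1}\E=0$ — so $\E$ is projectively normal, $\I_{\P(\E)/\P(H^0(S,\E))}$ is $3$-regular, and $I_{\P(\E)/\P(H^0(S,\E))}$ is generated in degree $\le3$. The only assertion left is that $\P(\E)$ is aCM. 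Invoking Remark \ref{rmk:acm-variety} once more, and using the isomorphisms $H^i(\P^M,\I_{\P(\E)/\P^M}(k))\cong H^{i-1}(\P(\E),\OPE(k))$ valid for $2\le i\le M-1$ — which covers the range $2\le i\le r+1$ because $M\ge r+2>\dim\P(\E)$ (as $\P(\E)$ is non-degenerate and is not $\P^{r+1}$, its Picard rank being at least $2$) — it suffices, given the projective normality already established, to prove
\[
H^j(\P(\E),\OPE(k))=0\qquad\text{for all }1\le j\le r\text{ and all }k\in\Z.
\]

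I would verify this in three ranges of $k$. If $k\ge0$ then $R^{>0}\pi_\ast\OPE(k)=0$ and $\pi_\ast\OPE(k)=\S^k\E$, so $H^j(\P(\E),\OPE(k))\cong H^j(S,\S^k\E)$, which vanishes for $j\ge1$ since $\S^k\E$ is $0$-regular by Corollary \ref{cor:regular-symmetric}(3) and $S$ is a surface. If $1-r\le k\le-1$, then every $R^i\pi_\ast\OPE(k)$ is zero, hence so are all the $H^j(\P(\E),\OPE(k))$. Finally, if $k\le-r$, Serre duality on $\P(\E)$ together with the relation $\omega_{\P(\E)}\cong\pi^\ast(K_S\o\det\E)\o\OPE(-r)$ — the multiplicative form of $K_{\P(\E)}=\pi^\ast(K_S+\det(\E))-r\xi$ from Remark \ref{rmk:sectional-curve-vector-bundle} — gives
\[
H^j(\P(\E),\OPE(k))\cong H^{r+1-j}(\P(\E),\pi^\ast(K_S\o\det\E)\o\OPE(-r-k))^{\ast}\cong H^{r+1-j}(S,K_S\o\det\E\o\S^{-r-k}\E)^{\ast},
\]
the second isomorphism being the projection formula since $-r-k\ge0$; for $1\le j\le r$ the degree $r+1-j$ lies in $\{1,\dots,r\}$, so this group vanishes either because $\dim S=2$ (when $r+1-j\ge3$) or, when $r+1-j\in\{1,2\}$, by Griffiths' vanishing theorem for the ample bundle $\E$, which kills $H^{\ge1}(S,K_S\o\det\E\o\S^m\E)$ for every $m\ge0$. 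The three ranges together yield the desired vanishing, so $\P(\E)$ is aCM.

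The step I expect to be the real obstacle is the range $k\le-r$ in part (2): the sheaves $K_S\o\det\E\o\S^m\E$ carry no $0$-regularity that the tensor-regularity machinery of Section~3 could exploit — indeed the dual bundles $\S^m\E^\ast\o(\det\E)^\ast$ whose cohomology one is secretly computing are far from being $0$-regular — so one genuinely needs a positivity input, here Griffiths vanishing. Everything else (part (1), the other two ranges of $k$, and the reductions through Remarks \ref{rmk:acm-variety} and \ref{rmk:cohomology-serre}) amounts to bookkeeping with the Leray spectral sequence and the already available statements.
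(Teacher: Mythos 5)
Your proof is correct, and part (1) is essentially the paper's argument (the paper reads off $H^i(\P(\E),\OPE)=H^i(S,\O_S)$ for $i=1,2$ from the aCM characterization in Remark \ref{rmk:acm-variety}, just as you do via the ideal-sheaf sequence). For part (2), however, you take a genuinely different route. The paper, after the same appeal to Proposition \ref{prop:alzati-russo}, proves aCM-ness by cutting with $r$ general hyperplanes and invoking Lemma \ref{lem:proj-norm-hyperplane-section}: an inductive vanishing $H^i(Y_j,\O_{Y_j}(h))=0$ for $h\ge -r+1+j$ (which only uses the cohomology of $\OPE(\l)$ in the range $\l\ge -r+1$ coming from $0$-regularity of $\S^k\E$) shows each $Y_j$ is projectively normal, and then aCM-ness of the sectional curve $Y_r$ is pulled back up through the locally Cohen--Macaulay converse of Lemma \ref{lem:proj-norm-hyperplane-section}(ii). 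You instead verify the full cohomological criterion $H^i(\P^M,\I_{\P(\E)/\P^M}(k))=0$, $1\le i\le r+1$, $k\in\Z$, directly: Leray and $0$-regularity of symmetric powers for $k\ge0$, fiberwise vanishing for $-r+1\le k\le -1$, and Serre duality plus Griffiths vanishing for $k\le -r$. This is sound --- $\E$ is ample since it is very ample, so Griffiths vanishing does kill $H^{\ge1}(S,K_S\o\det\E\o\S^m\E)$ for $m\ge0$, and your dimension count $M\ge r+2$ making the ideal-sheaf isomorphisms available up to $i=r+1$ is fine --- but it buys the negative twists at the cost of an external positivity theorem that the paper's hyperplane-section argument avoids entirely (the needed depth information there being hidden in the liaison-theoretic converse cited from Migliore). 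Conversely, your argument is more self-contained at the level of cohomology computations and makes explicit exactly which vanishings aCM-ness amounts to. One cosmetic point: at $k=0$ the vanishing $H^{1,2}(\P(\E),\OPE)=0$ should be attributed to the hypothesis $q(S)=p_g(S)=0$ rather than to $0$-regularity of $\S^0\E=\O_S$ (which need not be $0$-regular); this does not affect the proof.
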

	
	\begin{proof}
		Since $\S^k\E$ is $0$-regular (Corollary \ref{cor:regular-symmetric}), by \cite[Exercise III.8.4]{hartshorne2013algebraic} we have
		\begin{equation}\label{eq:cohomology}
			H^i(\P(\E),\OPE(\l))=\begin{cases}
				H^i(S,\O_S)& \mbox{for}\ i\ge0,\l=0\\
				0&\mbox{for}\ i\ge1,\l\ge-r+1,\l\ne0.
			\end{cases}
		\end{equation} Therefore, if $\P(\E)$ is aCM, then $H^i(S,\O_S)=H^i(\P(\E),\OPE)=0$ for $1\le i\le 2$ (Remark \ref{rmk:acm-variety}). This gives (1).
		
		Now assume $q(S)=p_g(S)=0$ and that $\E$ is $2$-normal. Then Proposition \ref{prop:alzati-russo} tells that $\E$ is projectively normal with $\I_{\P(\E)/\P(H^0(S,\E))}$ which is $3$-regular and $I_{\P(\E)/\P(H^0(S,\E))}$ that is generated in degree less than or equal to $3.$ We only need to prove that $\P(\E)$ is aCM. To do this, by Lemma \ref{lem:proj-norm-hyperplane-section} it suffices to prove that a smooth sectional curve of $\P(\E)\subset\P(H^0(S,\E))$ is projectively normal. Take $r$ smooth hyperplane sections $H_1,\dots, H_r\in |\OPE(1)|$ such that $Y_j=H_1\cap\cdots\cap H_j\subset\P(\E)$ is smooth and irreducible. We claim that $H^i(Y_j,\O_{Y_j}(h))=0$ for all $i\ge1$ as soon as $h\ge-r+1+j.$ \\
		To see this, we proceed by induction on $1\le j\le r.$ Thanks to (\ref{eq:cohomology}), we can immediately see from the exact sequence $0\to\OPE(h-1)\to\OPE(h)\to\O_{Y_1}(h)\to0$ that $H^i(Y_1,\O_{Y_1}(h))=0$ for all $i\ge1$ as long as $h-1\ge-r+1$ as desired. For $j>1,$ we get the claim by applying the inductive hypothesis to $0\to\O_{Y_{j-1}}(h-1)\to\O_{Y_{j-1}}(h)\to\O_{Y_{j}}(h)\to0.$ \\
		Now, as $H^1(\P(\E),\OPE)=H^1(Y_j,\O_{Y_j})=0$ for all $1\le j\le r-1,$ we know that all $Y_1,\dots,Y_r$ are linearly normal. Moreover, since $H^1(\P(\E),\OPE(k-1))=H^1(Y_j,\O_{Y_j}(k-1))=0$ for all $k\ge 2$ and $1\le j<r,$ Lemma \ref{lem:proj-norm-hyperplane-section} tells that $Y_1,\dots,Y_r$ are projectively normal. Since $Y_r$ is a sectional curve, this proves (2).
	\end{proof}
	
	For $0$-regular vector bundles with ample determinant on embedded surfaces with $q=p_g=0$ we can also give a geometric characterization for the non-$2$-normality.
	
	\begin{prop}\label{prop:proj-norm-0-regular-q=p=0}
		Let $S\subset\P^N$ be smooth projective surface with $q(S)=p_g(S)=0$ and let $\E$ be a $0$-regular vector bundle of rank $r\ge 2$ on $S$ with ample determinant line bundle bundle $E=\det(\E).$ Assume $h=h^0(S,\E)\ge r+3$ and let $\l=\binom{h-r}{2}-1.$ The following are equivalent:
		\begin{itemize}
			\item[(1)] $\E$ is not $2$-normal.
			\item[(2)] There exist a closed subscheme $Z\subset S$ and a non-zero divisor $D\subset S$ such that:
			\begin{itemize}
				\item[(a)] $Z$ is smooth of dimension $0.$
				\item[(b)] $Z$ is the degeneracy locus of $\l$ general sections $s_1,\dots,s_\l\in H^0(S,\L^2M_\E^\ast).$
				\item[(c)] $[Z]=\frac{1}{2}(h-r-2)\left((h-r+1)c_1(\E)^2-2c_2(\E)\right).$
				\item[(d)] $D\in |K_S+(h-r-1)E|.$
				\item[(e)] $Z\subset D.$
			\end{itemize}
			\item[(3)] There exist a closed subscheme $Z\subset S$ and a curve $C\subset S$ such that: 
			\begin{itemize}
				\item[(f)] $Z$ is the degeneracy locus of $\l$ general sections $\s_1,\dots,\s_\l\in  H^0(S,\L^2M_\E^\ast).$
				\item[(g)] $C$ is the degeneracy locus of the $(\l+1)$ general sections $\s_1,\dots,\s_\l,\s_{\l+1}\in  H^0(S,\L^2M_\E^\ast).$
				\item[(h)] $C\in |(h-r-1)E|$ is smooth and irreducible.
				\item[(i)] $Z\subset C$ is a special (effective) divisor.
			\end{itemize}
		\end{itemize}
	\end{prop}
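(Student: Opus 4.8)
Throughout set $\mathcal{G}:=\L^2M_\E^\ast$, a vector bundle of rank $\binom{h-r}{2}=\l+1$. Dualizing the syzygy sequence of $\E$ realizes $M_\E^\ast$, hence $\mathcal{G}$, as a quotient of a trivial bundle, so $\mathcal{G}$ is globally generated; and from $c(M_\E)=c(\E)^{-1}$ together with the Chern class formulas for $\L^2$ one computes $\det\mathcal{G}=(h-r-1)E$ and $c_2(\mathcal{G})=\frac{1}{2}(h-r-2)\bigl((h-r+1)c_1(\E)^2-2c_2(\E)\bigr)$, which is precisely the class in (c). Since $q(S)=p_g(S)=0$, Lemma~\ref{lem:proj-norm-0-regular-surface}(iii) and Serre duality give
\[
\E\ \text{is}\ 2\text{-normal}\iff h^2(S,\L^2M_\E)=0\iff H^0(S,\mathcal{G}\o K_S)=0,
\]
so that (1) is the assertion $H^0(S,\mathcal{G}\o K_S)\ne0$. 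The plan is to translate this cohomological condition into the geometry of (2) and (3) via the Buchsbaum--Rim complex of a general tuple of sections of $\mathcal{G}$.

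Concretely, I would fix $\l$ general sections $s_1,\dots,s_\l\in H^0(S,\mathcal{G})$ (there are enough, as $h^0(S,\mathcal{G})\ge\rk\mathcal{G}>\l$) and let $Z$ be the degeneracy locus of $s=(s_1,\dots,s_\l)\colon\O_S^{\oplus\l}\to\mathcal{G}$. By the standard theory of degeneracy loci for globally generated bundles, $Z$ is smooth of the expected dimension $2-2=0$ and $[Z]=c_2(\mathcal{G})$ by Thom--Porteous, giving (a) and (c). Moreover, for general $s$ the Buchsbaum--Rim complex of $s$ is exact and reads
\[
0\longrightarrow\O_S^{\oplus\l}\longrightarrow\mathcal{G}\longrightarrow\O_S\bigl((h-r-1)E\bigr)\longrightarrow\O_Z\longrightarrow0,
\]
in which the map $\mathcal{G}\to\O_S((h-r-1)E)$ is $v\mapsto s_1\wedge\cdots\wedge s_\l\wedge v$ under $\L^\l\mathcal{G}\o\mathcal{G}\to\det\mathcal{G}$, and the identification of the third term with $\det\mathcal{G}$ uses $q(S)=0$, i.e.\ $\Pic(S)=\mr{NS}(S)$. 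Tensoring this by $K_S$ and splitting off its left part as $0\to K_S^{\oplus\l}\to\mathcal{G}\o K_S\to\I_{Z/S}\bigl(K_S+(h-r-1)E\bigr)\to0$, the vanishings $h^0(S,K_S)=h^1(S,K_S)=0$ yield
\[
H^0(S,\mathcal{G}\o K_S)\ \cong\ H^0\bigl(S,\I_{Z/S}\bigl(K_S+(h-r-1)E\bigr)\bigr).
\]
The right-hand group is nonzero exactly when some effective $D\in|K_S+(h-r-1)E|$ contains $Z$; together with the previous paragraph this is the equivalence (1)$\Leftrightarrow$(2) (the conditions (a),(b),(c) being built into the construction of $Z$, and one checks $D$ can be taken to be a nonzero divisor).

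For (2)$\Leftrightarrow$(3) both directions rest on adjunction. Given (2), append a further general section $s_{\l+1}$ and let $C$ be the degeneracy locus of $s_1,\dots,s_{\l+1}$; then $C\in|\det\mathcal{G}|=|(h-r-1)E|$, it is smooth by Bertini, and connected --- hence irreducible --- because $H^1(S,\O_S(-(h-r-1)E))\cong H^1(S,K_S+(h-r-1)E)^\ast=0$ by Kodaira vanishing. One has $Z\subseteq C$ (the defining section of $C$ lies in the ideal of $Z$) and $Z\subseteq D$ by (e), so $Z\subseteq C\cap D$; since $p_g(S)=0$ makes $K_S$ non-effective, $C$ is not a component of $D\in|K_S+C|$, hence $D|_C$ is an effective divisor in $\bigl|(K_S+C)|_C\bigr|=|K_C|$, so $Z\le D|_C\in|K_C|$ and $h^0(C,K_C-Z)>0$: $Z$ is special on $C$. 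Conversely, given (3), specialness of $Z$ yields $H^0(C,\I_{Z/C}(K_C))\ne0$; using $C\in|(h-r-1)E|$ and $Z\subseteq C$, the exact sequence $0\to\O_S(K_S)\to\I_{Z/S}(K_S+(h-r-1)E)\to\I_{Z/C}(K_C)\to0$ together with $h^0(S,K_S)=h^1(S,K_S)=0$ gives $H^0(S,\I_{Z/S}(K_S+(h-r-1)E))\cong H^0(C,\I_{Z/C}(K_C))\ne0$, producing a divisor $D$ as in (2).

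I expect the genuine work to be the middle step: setting up the Buchsbaum--Rim exact sequence with exactly those terms for a general choice of sections --- that is, verifying that the degeneracy locus has the expected codimension and is smooth, that $\mr{coker}(s)$ is torsion free so that it equals $\I_{Z/S}\o\det\mathcal{G}$, and that the rank-one sheaves appearing really are the stated line bundles (this last point is where $q(S)=0$ enters). Granting that, the remainder is a chain of cohomology exact sequences in which $q(S)=p_g(S)=0$ kills all the error terms $H^\bullet(S,K_S^{\oplus\l})$ and $H^\bullet(S,\O_S(K_S))$, so that $H^0(S,\mathcal{G}\o K_S)$, $H^0(S,\I_{Z/S}(K_S+(h-r-1)E))$ and $H^0(C,\I_{Z/C}(K_C))$ all become isomorphic --- which is the entire content of the three equivalences.
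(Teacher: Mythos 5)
Your overall strategy is the same as the paper's: reduce (1) to $H^0(S,\L^2M_\E^\ast\o K_S)\ne0$ via Lemma \ref{lem:proj-norm-0-regular-surface}(iii) and Serre duality, construct $Z$ as the degeneracy locus of $\l$ general sections of $\L^2M_\E^\ast$ so as to get $0\to\O_S^{\oplus\l}\to\L^2M_\E^\ast\to\I_{Z/S}((h-r-1)E)\to0$, twist by $K_S$, and then pass to the curve $C$ by adjunction (your direct argument for (2)$\Rightarrow$(3)(i), using that $p_g=0$ prevents $C$ from being a component of $D$, is a legitimate and slightly more geometric variant of the paper's snake-lemma identification of the cokernel line bundle on $C$). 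The genuine gap is the non-emptiness of $Z$. The folklore Bertini-type statement you invoke only says that the degeneracy locus of $\l$ general sections of the globally generated rank-$(\l+1)$ bundle $\L^2M_\E^\ast$ is \emph{either empty or} smooth of codimension $2$; ``smooth of the expected dimension $0$'' is not automatic. If $Z$ could be empty, item (a) of (2) fails, $Z$ is not an effective divisor on the curve $C$ in (3), and the equivalences themselves become false at the boundary: for instance if $K_S+(h-r-1)E\sim0$ and $Z=\emptyset$ one has $H^0(S,\I_{Z/S}(K_S+(h-r-1)E))\ne0$ while no non-zero divisor $D$ as in (d)--(e) can exist.

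This is exactly where the hypothesis that $E=\det(\E)$ is ample is needed, and the fact that your argument for (1)$\Leftrightarrow$(2) never uses it is a symptom of the gap. The paper's fix is short: if $Z=\emptyset$ the sequence reads $0\to\O_S^{\oplus\l}\to\L^2M_\E^\ast\to\O_S(M)\to0$ with $M=(h-r-1)E$ ample (here $h-r-1\ge2$); dualizing and using $H^0(S,-M)=H^1(S,-M)=0$ (Kodaira plus Serre duality) gives $H^0(S,\L^2M_\E)\cong\C^{\oplus\l}\ne0$, contradicting $H^0(S,\L^2M_\E)\subseteq H^0(S,M_\E\o M_\E)\subseteq H^0(S,\E)\o H^0(S,M_\E)=0$, which follows from the syzygy sequence. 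Inserting this closes the gap. Two minor points besides: the identification of the rank-one quotient's determinant with $(h-r-1)E$ does not require $q(S)=0$ (it follows from $\det M_\E=\det(\E)^{-1}$), and for the implication (2)$\Rightarrow$(1) you should note, as the paper does, that \emph{any} $Z$ as in (b) sits in the same exact sequence, so the cohomological identification runs backwards.
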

	
	\begin{proof}
		First of all, from the syzygy exact sequence $0\to M_\E\to H^0(S,\E)\o\O_S\to\E\to 0$ and its dual we can see that:
		\begin{enumerate}[(A)]
			\item $H^0(S,\L^2M_\E)\subset H^0(S,M_\E\o M_\E)\subset H^0(S,\E)\o H^0(S,M_\E)=0$; 
			\item $c_1(M_\E)=-c_1(\E)$ and $c_2(M_\E)=c_1(\E)^2-c_2(\E)$;
			\item the map $\L^2H^0(S,\E)^\ast\o\O_S\to\L^2M_\E^\ast$ is surjective (thanks to (\ref{eq:fact3}) for $k=2$). 
		\end{enumerate}
		Now assume (1).	Since (C) tells in particular that $\L^2M_\E^\ast$ is generated by global sections, we can take a general subspace $V\subset H^0(S,\L^2M_\E^\ast)$ of dimension $$\l=\rk(\L^2M_\E^\ast)-1=\binom{h-r}{2}-1$$ and we let $Z=D_{\l-1}(v)\subset S$ be the degeneracy locus of the evaluation map $v\colon V\o\O_S\to \L^2M_\E^\ast.$ As $\L^2M_\E^\ast$ is generated by global sections and $\dim S=2,$ it is well known, for instance from \cite[Statement (folklore), §4.1]{banica1989smooth}, that $Z$ is either empty or is smooth of (the expected) codimension $2$ and that there is a short exact sequence 
		\[
		\begin{tikzcd}
			0\rar& V\o\O_S\rar&\L^2M_\E^\ast\rar&\I_{Z/S}(M)\rar&0,
		\end{tikzcd}
		\] with $M=\det(\L^2M_\E^\ast)=(h-r-1)E$ (Lemma \ref{app:exterior-chern}(i)) and $\I_{Z/S}=\O_S$ in case $Z=\emptyset.$ However $Z$ cannot be empty, for otherwise, by dualizing the above sequence and using Kodaira vanishing on the line bundle $M,$ which is ample by the hypothesis, we would have $H^0(S,\L^2M_\E)\cong V^\ast$ which contradicts (A). This gives (a)-(b). As $Z\neq\emptyset,$ by construction and (B) (and by Lemma \ref{app:exterior-chern}) we get  $$[Z]=c_2(\L^2M_\E^\ast)=\frac{1}{2}(h-r+1)(h-r-2)c_1(\E)^2-(h-r-2)c_2(\E),$$ that is (c). Now, twist the above sequence through by $K_S$ and take the cohomology. We immediately see that $H^0(S,\L^2M_\E^\ast\o K_S)\cong H^0(S,\I_{Z/S}(M+K_S)).$ By Lemma  \ref{lem:proj-norm-0-regular-surface}, we finally get (d)-(e), proving (2).
		
		Conversely, if $Z\subset S$ satisfies  (a) and (b), then (c) holds by construction. Furthermore, by \cite[Teorema 2.14 \& Esercizio, p. 23]{ottaviani1995varieta}, the ideal sheaf $\I_{Z/S}$ fits into the same exact sequence as above. If there is $D\subset S$ satisfying (d)-(e), we have $H^0(S,\I_{Z/S}(M+K_S))\ne 0.$ Repeating the above argument backwards, we obtain the equivalence between (1) and (2).
		
		Now we observe that (a) and (b), which imply (c) as seen above, is equivalent to (f) and (g), which in turn yield (h) by construction. One direction is obvious, hence we assume (b). Let $W\subset  H^0(S,\L^2M_\E^\ast)$ be the subspace generated by the sections in (b) and choose a subspace $W'\subset  H^0(S,\L^2M_\E^\ast)$ of dimension $\l+1$ that contains $W.$ The degeneracy locus $D_{\l+1}(w)$ of the evaluation map $w\colon W'\otimes \O_S\to \L^2M_\E^\ast$ contains $Z,$ is supported on a smooth member $C\in |M|,$ which is then irreducible by \cite[Corollary III.7.9]{hartshorne2013algebraic}, and $\mr{coker}(w)=L$ is a line bundle on $C$ (see \cite[Statement (folklore), §4.1]{banica1989smooth} and \cite[§4.1]{benedetti2024conjecture}). This gives the claim. In addition to this, from the Snake lemma applied to the diagram 
		\[\begin{tikzcd}
			0\rar& W\o\O_S\rar&\L^2M_\E^\ast\rar&\I_{Z/S}(M)\rar&0, \\
			\\
			0 & W'\o\O_S & \L^2M_\E^\ast & L & 0,
			\arrow[from=1-1, to=1-2]
			\arrow[from=1-2, to=1-3]
			\arrow[hook, from=1-2, to=3-2]
			\arrow[from=1-3, to=1-4]
			\arrow[from=1-3, to=3-3, "\cong"]
			\arrow[from=1-4, to=1-5]
			\arrow[from=1-4, to=3-4]
			\arrow[from=3-1, to=3-2]
			\arrow[from=3-2, to=3-3]
			\arrow[from=3-3, to=3-4]
			\arrow[from=3-4, to=3-5]
		\end{tikzcd}\]
		we get the exact sequence $$
		\begin{tikzcd}
			0\rar&\O_S\rar&\I_{Z/S}(M)\rar& L\rar& 0
		\end{tikzcd}
		$$ which yields $L(-M)=i_\ast\O_C(-Z),$ where $i\colon C\hookrightarrow S$ is the inclusion.
		
		To prove the equivalence (2)-(3), it's enough to see that $$H^1(C,Z)\ne 0\ \text{if and only if}\ H^0(S,\I_{Z/S}(M+K_S))\ne 0.$$ To do this, twist the above sequence through by $K_S$ and take the cohomology. It's immediate to see that $H^0(S,\I_{Z/S}(M+K_S))\cong H^0(C,\O_C(K_S+M-Z)).$ On the other hand, by adjunction and Serre duality, we have $h^0(C,\O_C(K_S+M-Z))=h^1(C,Z).$ Then the assertion follows. 
	\end{proof}
	
	In turn we get a characterization for ample $0$-regular bundle on embedded surfaces with $q=p_g=0$ for non-being projectively normal and aCM.
	
	\begin{proof}[Proof of Theorem \ref{thm:proj-norm-0-regular-q=p=0}]
		Remark \ref{rmk:acm-variety} immediately tells that (2) implies (1). For the converse, we just need to observe that if $\E$ was projectively normal, then it would be automatically very ample and $2$-normal, but then Proposition \ref{prop:0-regular-aCM-surface} would imply that $\P(\E)$ is aCM, contradicting the assumption. 
		On the other hand, since $\E$ is ample, by Remark \ref{rmk:proj-norm} and Lemma \ref{lem:proj-norm-0-regular-surface} we know that $\E$ is not projectively normal if and only if $\E$ is not $2$-normal. As $E$ is ample as well by \cite[Corollary 6.1.16]{lazarsfeld2017positivity2}, the equivalence between (2), (3) and (4) follows from Proposition \ref{prop:proj-norm-0-regular-q=p=0}.
	\end{proof}
	
	Then we can provide a numerical criterion for the non projective normality of an ample $0$-regular bundle on a surface with $q=p_g=0.$ 
	
	\begin{cor}\label{cor:non-proj-norm-surf-q=p=0}
		Let $S\subset\P^N$ be a smooth projective surface with $q(S)=p_g(S)=0.$ Let $\E$ be an ample $0$-regular vector bundle on $S\subset\P^N$ of rank $r\ge2$ with $h=h^0(S,\E)\ge r+3.$ Then $\E$ is not aCM, or equivalently not projectively normal, if 
		\[
		(h-r-1)c_1(\E)\cdot K_S+2(h-r-2)c_2(\E)+h(h-1)>(h-r-3)c_1(\E)^2+r(2h-r-1).
		\] 
	\end{cor}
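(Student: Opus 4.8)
The plan is to translate the statement into a single cohomological non‑vanishing and then bound that from below by an Euler characteristic. Since $\E$ is ample, Remark \ref{rmk:proj-norm} and Lemma \ref{lem:proj-norm-0-regular-surface} show that $\E$ fails to be projectively normal precisely when it fails to be $2$‑normal, and by Theorem \ref{thm:proj-norm-0-regular-q=p=0} this is in turn equivalent to $\P(\E)$ failing to be aCM; thus the two conclusions of the corollary coincide and it suffices to prove that $\E$ is not $2$‑normal. By Lemma \ref{lem:proj-norm-0-regular-surface}(iii) and the hypothesis $q(S)=p_g(S)=0$, being not $2$‑normal is equivalent to $h^2(S,\L^2M_\E)>0$, so this is what I would aim for.

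To produce that inequality I would estimate $h^2$ by $\chi$. As recorded in point (A) of the proof of Proposition \ref{prop:proj-norm-0-regular-q=p=0}, one has $H^0(S,\L^2M_\E)=0$, whence
\[
\chi(S,\L^2M_\E)=h^2(S,\L^2M_\E)-h^1(S,\L^2M_\E)\le h^2(S,\L^2M_\E).
\]
So it suffices to show $\chi(S,\L^2M_\E)>0$, and the point is that the numerical condition in the statement is exactly this inequality.

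To check this last claim I would run Hirzebruch--Riemann--Roch on the surface $S$, using $\chi(\O_S)=1$ (because $q=p_g=0$), so that $\chi(S,\L^2M_\E)=\rk(\L^2M_\E)+\tfrac12 c_1(\L^2M_\E)^2-\tfrac12 c_1(\L^2M_\E)\cdot K_S-c_2(\L^2M_\E)$. The input data are $c_1(M_\E)=-c_1(\E)$ and $c_2(M_\E)=c_1(\E)^2-c_2(\E)$ from point (B) of the proof of Proposition \ref{prop:proj-norm-0-regular-q=p=0}, together with the Chern classes of $\L^2M_\E$: writing $k=\rk M_\E=h-r$, the splitting principle (or Lemma \ref{app:exterior-chern}) gives $c_1(\L^2M_\E)=(k-1)c_1(M_\E)$ and $c_2(\L^2M_\E)=\tfrac{k-2}{2}\bigl((k-1)c_1(M_\E)^2+2c_2(M_\E)\bigr)$, the latter being exactly the class $[Z]$ of Proposition \ref{prop:proj-norm-0-regular-q=p=0}(c). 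Substituting, simplifying the coefficient $(k-1)^2-(k-2)(k+1)=3-k$ of $c_1(\E)^2$, and using $k(k-1)=(h-r)(h-r-1)=h(h-1)-r(2h-r-1)$, I expect to obtain
\[
2\,\chi(S,\L^2M_\E)=(h-r-1)c_1(\E)\cdot K_S+2(h-r-2)c_2(\E)+h(h-1)-(h-r-3)c_1(\E)^2-r(2h-r-1),
\]
so that the hypothesis is precisely $\chi(S,\L^2M_\E)>0$ and the proof concludes as above. The only genuine work is the Chern‑class bookkeeping of this last step --- in particular computing $c_2(\L^2M_\E)$ and carrying the algebra through so the two sides match the stated shape; everything else is a direct assembly of Lemma \ref{lem:proj-norm-0-regular-surface}, Theorem \ref{thm:proj-norm-0-regular-q=p=0}, and the vanishings $H^0(S,\L^2M_\E)=0$ and $\chi(\O_S)=1$ already at hand.
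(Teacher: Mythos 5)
Your proof is correct, and it reaches the stated inequality by a genuinely more direct route than the paper. The reduction is the same: under the hypotheses, failure of projective normality, failure of the aCM property and failure of $2$-normality all coincide, and by Lemma \ref{lem:proj-norm-0-regular-surface}(iii) with $p_g(S)=0$ the latter is exactly $h^2(S,\L^2M_\E)>0$. From there you argue entirely on $S$: since $H^0(S,\L^2M_\E)=0$ (point (A) in the proof of Proposition \ref{prop:proj-norm-0-regular-q=p=0}), you have $h^2(S,\L^2M_\E)\ge\chi(S,\L^2M_\E)$, and your Hirzebruch--Riemann--Roch bookkeeping with $\chi(\O_S)=1$, $c_1(M_\E)=-c_1(\E)$, $c_2(M_\E)=c_1(\E)^2-c_2(\E)$ and Lemma \ref{app:exterior-chern} is right: with $k=h-r$ the coefficient of $c_1(\E)^2$ in $2\chi$ is $(k-1)^2-(k-2)(k+1)=3-k$ and $k(k-1)=h(h-1)-r(2h-r-1)$, so $2\chi(S,\L^2M_\E)$ is precisely the difference of the two sides of the stated inequality. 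The paper instead runs the same Euler-characteristic count through the geometric characterization it has just proved: it takes the degeneracy loci $Z\subset C$ of $\l$ and $\l+1$ general sections of $\L^2M_\E^\ast$, dualizes the sequence $0\to\O_S^{\oplus(\l+1)}\to\L^2M_\E^\ast\to i_\ast\O_C(M-Z)\to0$ via Grothendieck--Verdier duality to get $h^0(C,Z)\ge\l+1$ from the same vanishing $H^0(S,\L^2M_\E)=0$, and then applies Riemann--Roch and adjunction on $C$ to bound $h^1(C,Z)$ below by the very same quantity $\chi(S,\L^2M_\E)$; non-$2$-normality then follows from Proposition \ref{prop:proj-norm-0-regular-q=p=0}(3). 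So the two arguments are numerically identical, but yours is more elementary: it needs only Lemma \ref{lem:proj-norm-0-regular-surface}(iii), the vanishing $H^0(S,\L^2M_\E)=0$ and the Chern classes of $\L^2M_\E$, avoiding the general-position degeneracy-locus construction and the duality computation (and, in this step, the ampleness of $\det\E$). What the paper's detour buys is the geometric interpretation of the obstruction, namely the special divisor $Z$ on the curve $C$ appearing in Theorem \ref{thm:proj-norm-0-regular-q=p=0}(4), which your streamlined argument does not exhibit.
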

	
	We first state a simple remark involving Grothendieck-Verdier duality.
	
	\begin{rmk}\label{rmk:grothendieck-verdier}
		Let $f\colon X\to Y$ be an affine morphism of smooth projective varieties of relative dimension $-k=\dim X-\dim Y\le0,$ and let $\F,\g$ be locally free sheaves on $X,Y$ respectively. Then $$\EExt^{k}_{\O_Y}(f_\ast\F,\g)\cong \g(-K_Y)\o f_\ast(\F^\ast(K_X)).$$ In particular, if $f$ is the inclusion of a subvariety of codimension $k\ge1$ with normal bundle $\mc{N}_{X/Y},$ then  $$\EExt^k_{\O_Y}(f_\ast\F,\g)\cong \g\o f_\ast(\F^\ast\o\L^k\mc{N}_{X/Y}).$$
		
		To see this, we are going to use Grothendieck-Verdier duality \cite[Theorem 3.34]{huybrechts2006fourier}. By \cite[Exercise III.8.2]{hartshorne2013algebraic} the direct image $\mb{R}f_\ast\E=f_\ast\E$ does not need to be derived for any coherent sheaf $\E$ on $X,$ as well as  $\mb{L}f^\ast(\g)=f^\ast\g$ because $\g$ is locally free. The relative dualizing bundle of $f$ is $\omega_f=K_X(-f^\ast K_S),$ hence Grothendieck-Verdier duality yields the isomorphisms
		\begin{equation}\label{eq:grothendieck-verdier-duality}
			\begin{aligned}
				\RHom_{\O_Y}\left(f_\ast\F,\g\right)&\cong \RHom_{\O_Y}\left(\mb{R}f_\ast\F,\g\right)\\
				&\cong\mb{R}f_\ast\RHom_{\O_X}\left(\F,\mb{L}i^\ast(\g)\o\omega_f\left[-k\right]\right)\\
				&\cong\mb{R}f_\ast\RHom_{\O_X}\left(\F,f^\ast\g\o\omega_f\left[-k\right]\right)\\
				&\cong\mb{R}f_\ast\Hom_{\O_X}\left(\F,f^\ast\g\o\omega_f[-k]\right)\\
				&\cong\mb{R}f_\ast(\F^\ast\o f^\ast\g\o\omega_f[-k])\\
				&=\mb{R}f_\ast(\F^\ast(K_X)\o f^\ast(\g(-K_Y))[-k]).
			\end{aligned} 
		\end{equation}
		Here we performed the identifications $$\RHom_{\O_X}\left(\F,f^\ast\g\o\omega_f[-k]\right)\cong \Hom_{\O_X}\left(\F,f^\ast\g\o\omega_f[-k]\right)\cong\F^\ast\o f^\ast\g\o\omega_f[-k]$$ as complexes concentrated in degree $k$ which comes from the locally freeness on $X$ of $\F$ (see \cite[§3.3, p. 84]{huybrechts2006fourier} and \cite[Exercise II.5.1(b)]{hartshorne2013algebraic}). 
		Taking the $k$-th cohomology sheaf $\mc{H}^k\colon\mr{D}^{\mr{b}}(\mb{Coh}(Y))\to\mb{Coh}(Y)$ on both sides of (\ref{eq:grothendieck-verdier-duality}) and adopting the usual sign conventions for translations and cohomology \cite[(1.3.4)]{conrad2000grothendieck}, by definition \cite[(21.21.2)]{gortz2023algebraic} and by \cite[Remark 21.26(3)]{gortz2023algebraic} we have the claimed isomorphism of coherent sheaves
		\begin{align*}
			\EExt^k_{\O_Y}(f_\ast\F,\g)&\cong \mc{H}^k(\RHom_{\O_Y}\left(f_\ast\F,\g\right))\\
			&\cong\mc{H}^k(\mb{R}f_\ast(\F^\ast(K_X)\o f^\ast(\g(-K_Y))[-k]))\\
			& \cong \mb{R}^0(f_\ast(\F^\ast(K_X)\o f^\ast(\g(-K_Y))))\\
			&\cong f_\ast(\F^\ast(K_X)\o f^\ast(\g(-K_Y)))\\
			&\cong \g(-K_Y)\o f_\ast(\F^\ast(K_X)).
		\end{align*}
		The last part of the claim descends from adjunction formula \cite[Proposition II.8.20]{hartshorne2013algebraic}.
	\end{rmk}

	\begin{proof}[Proof of Corollary \ref{cor:non-proj-norm-surf-q=p=0}]
		Just like in Proposition \ref{prop:proj-norm-0-regular-q=p=0} and its proof, consider the degeneracy loci $Z\subset C\subset S$ of $\l$ and $\l+1$ general sections of the globally generated bundle $\L^2M_\E^\ast,$ with $\l=\binom{h-r}{2}-1.$ Both of them are smooth, $Z$ is $0$-dimensional with $[Z]$ given in Proposition \ref{prop:proj-norm-0-regular-q=p=0}(c), and $C\in |M|$ is irreducible, where $M=\det(\L^2M_\E^\ast)=(h-r-1)\det(\E).$  
		As seen in the proof of Proposition \ref{prop:proj-norm-0-regular-q=p=0}, we have the exact sequence 
		\[
		\begin{tikzcd}
			0\rar&\O_S^{\oplus(\l+1)}\rar&\L^2M_\E^\ast\rar&i_\ast\O_C(M-Z)\rar& 0
		\end{tikzcd}
		\] where $i\colon C\hookrightarrow S$ is the inclusion. Taking the dual with respect to $S,$ we get the short exact sequence
		\[
		\begin{tikzcd}
			0\rar&\L^2M_\E\rar&\O_S^{\oplus(\l+1)}\rar&\EExt^1_{\O_S}(i_\ast\O_C(M-Z),\O_S)\rar&0.
		\end{tikzcd}
		\] But $\EExt^1_{\O_S}(i_\ast\O_C(M-Z),\O_S)\cong i_\ast\O_C(Z)$ by Remark \ref{rmk:grothendieck-verdier}.	Thus, using that $\L^2M_\E$ has no non-zero global sections, we can see that $h^0(C,Z)\ge \l+1.$ Computing the genus of $C\subset S$ via adjunction formula, applying Riemann-Roch theorem and using this inequality, we see that
		\[
		h^1(C,Z)\ge \frac{1}{2}\left((h-r-1)c_1(\E)\cdot K_S+2(h-r-2)c_2(\E)+h(h-1)-(h-r-3)c_1(\E)^2-r(2h-r-1)\right).
		\] By Proposition \ref{prop:proj-norm-0-regular-q=p=0}, $\E$ is not projectively normal as soon as $h^1(C,Z)>0.$ Then the claim follows.
	\end{proof}
	
	For an Ulrich bundle $\E$ on a smooth embedded surface $S\subset\P^N$ we can calculate the dimensions of the global sections of the second tensor power $\E\o\E$ and of the symmetric powers $\S^2\E,\S^3\E.$ As a necessary condition for the $k$-normality of $\E$ is $\dim\S^kH^0(S,\E)\ge h^0(S,\S^k\E),$ this gives a way to check if $\E$ could potentially be $k$-normal. This method will be used in the next examples and also in the next section for Ulrich bundles on hypersurfaces.
	
	First we recall that the second Chern class of an Ulrich bundle $\E$ of rank $r$ on a smooth surface $S\subset\P^N$ of degree $d$ is given by Casnati formula \cite[Proposition 2.1]{casnati2017special}
	\begin{equation}\label{eq:ulrich-surface-c2}
		c_2(\E)=\frac{1}{2}\left(c_1(\E)^2-c_1(\E)\cdot K_S\right)+r\chi(\O_S)-rd.
	\end{equation}
	
	\begin{lemma}\label{lem:ulrich-global-sections-tensor-surface}
		Let $\E$ be a rank $r$ Ulrich bundle on a smooth projective surface $S\subset\P^N$ of degree $d.$ Then:\begin{itemize}
			\item[(i)] $h^0(S,\E\o\E)=\chi(S,\E\o\E)=c_1(\E)^2+r^2(2d-\chi(S,\O_S)).$
			\item[(ii)] $h^0(S,\S^2\E)=\chi(S,\S^2\E)=r(r+2)d+\frac{1}{2}(c_1(\E)^2+c_1(\E)\cdot K_S)-\frac{r(r+3)}{2}\chi(S,\O_S).$
			\item[(iii)] $h^0(S,\S^3\E)=\chi(S,\S^3\E)=\frac{1}{6}(r+2)(3c_1(\E)^2+3c_1(\E)\cdot K_S+3rd(r+3)-2r\chi(S,\O_S)(r+4)).$
		\end{itemize}
		In particular, if $c_1(\E)=\frac{r}{2}(K_S+3H),$ where $H$ is the class of a hyperplane section, we have:
		\begin{itemize}
			\item[(a)] $h^0(\E\o\E)=\chi(S,\E\o\E)=\frac{r^2}{4} \left[17d+6 K_S\cdot H+K_S^2-4\chi(S,\O_S)\right].$
			\item[(b)] $h^0(S,\S^2\E)=\chi(S,\S^2\E)=\frac{r}{8}\left[(17r+16)d+(2+r)K_S^2+6(r+1)K_S\cdot H-4(r+3)\chi(S,\O_S)\right].$
			\item[(c)] $h^0(S,\S^3\E)=\chi(S,\S^3\E)\\
			=\frac{r}{24}(r+2)\left[3d(13r+12)+3(r+2)K_S^2+18(r+1)H\cdot K_S-8(r+4)\chi(S,\O_S)\right].$
		\end{itemize}
	\end{lemma}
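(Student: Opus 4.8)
The plan is to reduce each $h^{0}$ to an Euler characteristic and then evaluate the latter by Hirzebruch--Riemann--Roch, using Casnati's formula \eqref{eq:ulrich-surface-c2} to eliminate $c_{2}(\E)$ at the very end.

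First I would establish $h^{0}(S,\F)=\chi(S,\F)$ for $\F\in\{\E\o\E,\,\S^{2}\E,\,\S^{3}\E\}$. Being Ulrich, $\E$ is $0$-regular (Remark \ref{rmk:ulrich-facts}); since the hyperplane class $H=\O_{S}(1)$ is very ample on $S\subset\P^{N}$, Corollary \ref{cor:regular-symmetric} shows that $\E^{\o 2}=\E\o\E$, $\S^{2}\E$ and $\S^{3}\E$ are all $0$-regular with respect to $H$. On a surface $0$-regularity propagates (Mumford), so each of these sheaves is in particular $1$- and $2$-regular, which forces $H^{1}=H^{2}=0$; this is exactly the vanishing already exploited in the proof of Lemma \ref{lem:proj-norm-0-regular-surface}. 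Hence each $h^{0}$ equals the corresponding $\chi$.

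Next I would apply Hirzebruch--Riemann--Roch on the surface, in the form $\chi(S,\F)=\rk(\F)\,\chi(\O_{S})+\tfrac{1}{2}c_{1}(\F)\cdot\bigl(c_{1}(\F)-K_{S}\bigr)-c_{2}(\F)$, after recording the rank, $c_{1}$ and second Chern character $\mathrm{ch}_{2}$ of $\E\o\E$, $\S^{2}\E$, $\S^{3}\E$ in terms of $r$, $c_{1}(\E)$, $c_{2}(\E)$ via the splitting principle. Concretely, $c_{1}(\S^{p}\E)=\binom{r+p-1}{p-1}c_{1}(\E)$, and each $\mathrm{ch}_{2}$ is a fixed, $r$-dependent linear combination of $c_{1}(\E)^{2}$ and $c_{2}(\E)$: for $\E\o\E$ one uses $\mathrm{ch}(\E\o\E)=\mathrm{ch}(\E)^{2}$, for $\S^{2}\E$ one may further split off $\Lambda^{2}\E$, and for $\S^{3}\E$ one sums $\tfrac{1}{2}(a_{i}+a_{j}+a_{k})^{2}$ over $1\le i\le j\le k\le r$ and re-expresses the result through $c_{1}(\E)^{2}=\sum a_{i}^{2}+2c_{2}(\E)$. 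Substituting these into HRR gives $\chi$ of the three bundles as a polynomial in $r,d,\chi(\O_{S}),c_{1}(\E)^{2},c_{1}(\E)\cdot K_{S},c_{2}(\E)$.

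Finally I would insert Casnati's formula \eqref{eq:ulrich-surface-c2}, $c_{2}(\E)=\tfrac{1}{2}\bigl(c_{1}(\E)^{2}-c_{1}(\E)\cdot K_{S}\bigr)+r\chi(\O_{S})-rd$: this eliminates $c_{2}(\E)$ and makes the $c_{1}(\E)\cdot K_{S}$ contribution cancel completely for $\E\o\E$ and collapse to a single term (coefficient $\tfrac{1}{2}$, resp.\ $\tfrac{r+2}{2}$) for $\S^{2}\E$ and $\S^{3}\E$, producing exactly (i), (ii) and (iii). For the additional identities (a)--(c) I would then substitute $c_{1}(\E)=\tfrac{r}{2}(K_{S}+3H)$ (which holds in particular when $\Pic(S)\cong\Z$, cf.\ Remark \ref{rmk:ulrich-facts}), expand $(K_{S}+3H)^{2}=K_{S}^{2}+6K_{S}\cdot H+9d$ via $H^{2}=d$, and collect terms. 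There is no conceptual obstacle here; the only delicate point is the bookkeeping -- computing $\mathrm{ch}_{2}(\S^{3}\E)$ without error and keeping the arithmetic straight through the Casnati substitution -- and checking the formulae at $r=1$, where $\S^{k}\E=\E^{\o k}$ is a line bundle and $c_{2}(\E)=0$, is the fastest safeguard against slips.
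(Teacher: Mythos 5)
Your proposal is correct and follows essentially the same route as the paper: deduce $h^0=\chi$ from the $0$-regularity of $\E\o\E$, $\S^2\E$, $\S^3\E$ (Corollary \ref{cor:regular-symmetric}), evaluate $\chi$ by Hirzebruch--Riemann--Roch using the Chern data of these bundles, and then eliminate $c_2(\E)$ via Casnati's formula (\ref{eq:ulrich-surface-c2}) before specializing $c_1(\E)=\frac{r}{2}(K_S+3H)$. The only cosmetic difference is that you recompute the Chern character data by the splitting principle on the spot, whereas the paper cites its appendix Lemmas \ref{app:tensor-chern}--\ref{app:S^k-S^3} and Corollary \ref{app:symmetric-chern}, which are proved by exactly those computations.
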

	\begin{proof}
		Both $\E\o\E$ and the symmetric powers $\S^2\E,\S^3\E$ are $0$-regular (Corollary \ref{cor:regular-symmetric}), therefore we only need to compute the Euler characteristics $\chi(S,\E\o\E)$ and $\chi(S,\S^2\E), \chi(S,\S^3\E).$ This is easily done via Hirzebruch-Riemann-Roch theorem \cite[Theorem A.4.1]{hartshorne2013algebraic} which says that 
		\[
		\chi(S,\F)=s\chi(S,\O_S)+\frac{1}{2}\left(c_1(\F)^2-c_1(\F)\cdot K_S\right)-c_2(\F)
		\] for any rank $s$ vector bundle $\F$ on $S.$ Using (\ref{eq:ulrich-surface-c2}), the conclusion follows from formulae in Lemmas \ref{app:tensor-chern}-\ref{app:S^k-S^3} and Corollary \ref{app:symmetric-chern}.
	\end{proof}
	
	\begin{ex}\label{ex:ulrich-k3}
		Let $S\subset\P^3$ be any smooth K3 surface of degree $4$ and let $\E$ be an Ulrich bundle of rank $r=2$ or $r=4$ with $\det(\E)=\O_S(3r/2)$ (which always exists by \cite[Theorem 1]{faenzi2019ulrich} and \cite[Proposition 3.3.1]{costa2021ulrich}. Then $\E$ cannot be projectively normal because, by Lemma \ref{lem:ulrich-global-sections-tensor-surface}(b), the inequality $$\dim \S^2 H^0(S,\E)=2r(4r+1)\ge h^0(S,\S^2\E),$$ which is necessary for the $2$-normality, is never satisfied.
	\end{ex}
	
	\begin{ex}\label{ex:ulrich-complete-intersection}
		Let $S\subset\P^4$ be a smooth very general complete intersection of type $(2,a)$ for $a\ge 3.$ Then no rank $r$-Ulrich bundle on $S,$ which always exists by \cite{herzog1991linear}, can be projectively normal if $a\ge 15.$
		
		To see this, let $\E$ be an Ulrich bundle of rank $r$ on $S.$ By Noether-Lefschetz theorem we can assume $\Pic(S)\cong \Z\cdot\O_S(1)$ (for instance, one can apply \cite[Theorem 1]{ravindra2009noether}, or \cite[Proposition 3.2]{bruzzo2021existence}, to $\O_Q(a)$ where $Q\subset\P^4$ is a smooth quadric). Letting $d:=\deg S=2a$ be the degree of $S$ and $H\subset S$ be a hyperplane section of $S\subset\P^N,$ we have $K_S=(a-3)H=\frac{1}{2}(d-6)H$ and $\chi(S,\O_S)=\frac{d}{24}(d^2-9d+26).$ Moreover, we know from Remark \ref{rmk:ulrich-facts} that $c_1(\E)=\frac{r}{2}(K_S+3H)=\frac{r}{4}dH.$ Substituting in Lemma \ref{lem:ulrich-global-sections-tensor-surface}(b) we get $$h^0(S,\S^2\E)=\frac{rd}{96}\left[rd^2+18(r+1)d+44r+36\right].$$ This means that if $\E$ is $2$-normal, then $h^0(S,\S^2\E)-rd(rd+1)/2\le0,$ which happens if and only if $$rd^2-(30r-18)d+44r-12\le0.$$ One can check that this forces the following conditions on $r$ and $d=2a$:
		\begin{itemize}
			\item $6\le d\le 18$ if $r\ge 2$; 
			\item $d=20,22$ if $r\ge 3$;
			\item $d=24$ if $r\ge 5$;
			\item $d=26$ if $r\ge 8$;
			\item $d=28$ if $r\ge 41$;
		\end{itemize} In any case we have $2a=d\le 28.$ Then the claim follows.
	\end{ex}

	\section{Projective normality of Ulrich bundles on low dimensional hypersurfaces}
	Ulrich bundles on hypersurfaces always exist \cite{herzog1991linear}. Moreover, Ulrich bundles on the general hypersurface $X\subset\P^{n+1}$ of degree $d\ge 2n$ is very ample as $X$ does not contain lines \cite[Theorem1 ]{lopez2023geometrical}. Therefore the expectation is them to be quite positive, in particular projectively normal. However we will see that this is not the case for hypersurfaces of dimension $2$ and $3.$	In this section, all hypersurfaces are smooth.\\
	
	We start by recalling that every Ulrich bundle on a hypersurface $X\subset\P^{n+1}$ admits a locally free resolution on $X.$
	
	\begin{lemma}\label{lem:ulrich-tensor-res}
		Let $X\subset\P^{n+1}$ be a smooth hypersurface of dimension $n\ge1$ and degree $d\ge2.$ An Ulrich bundle $\E$ of rank $r$ on $X$ admits the resolution
		\begin{equation}\label{eq:ulrich-resolution}
			\begin{tikzcd}
				0\rar&\E(-d)\rar&\O_X(-1)^{\oplus rd}\rar&\O_X^{\oplus rd}\cong H^0(X,\E)\o\O_X\rar&\E\rar&0.
			\end{tikzcd}
		\end{equation}
		In particular, the tensor power $\E\o\E$ is resolved by
		\begin{equation}\label{eq:ulrich-tensor-res}
			\begin{tikzcd}
				0\rar&\E\o\E(-d)\rar& \E(-1)^{\oplus rd}\rar& \E^{\oplus rd}\cong H^0(X,\E)\o\E\rar& \E\o\E\rar&0.
			\end{tikzcd}
		\end{equation}
	\end{lemma}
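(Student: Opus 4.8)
The plan is to derive both resolutions from the linear resolution of $\E$ over the ambient projective space. Write $i\colon X\hookrightarrow\P^{n+1}$ for the inclusion and, abusing notation, denote $i_\ast\E$ again by $\E$. First I would recall the standard fact (see e.g. \cite{coskun2017survey,beauville2018introduction,costa2021ulrich}) that, viewed as a coherent sheaf on $\P^{n+1}$, an Ulrich bundle $\E$ admits a \emph{linear} resolution
\[
0\longrightarrow\O_{\P^{n+1}}(-1)^{\oplus rd}\xrightarrow{\ A\ }\O_{\P^{n+1}}^{\oplus rd}\longrightarrow\E\longrightarrow0,
\]
where $A$ is a matrix of linear forms. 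If one prefers to argue from scratch: by Remark \ref{rmk:ulrich-facts} the graded module $\Gamma_\ast(\E)$ is a maximal Cohen--Macaulay module over $\C[x_0,\dots,x_{n+1}]/(f)$ which is $0$-regular and generated in degree $0$ by its $h^0(X,\E)=rd$ sections; hence its minimal free resolution over $\C[x_0,\dots,x_{n+1}]$ has length $\codim(X,\P^{n+1})=1$, is linear by $0$-regularity, and both free modules have rank $rd$ because $\Gamma_\ast(\E)$ has generic rank $0$. Taking global sections also identifies $\O_{\P^{n+1}}^{\oplus rd}$ with $H^0(X,\E)\o\O_{\P^{n+1}}$.

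Next I would restrict this length-one resolution to $X$, i.e. apply $-\o_{\O_{\P^{n+1}}}\O_X$. Since it is a two-term complex of locally free sheaves, this produces the exact sequence
\[
0\longrightarrow\Tor_1^{\O_{\P^{n+1}}}(\E,\O_X)\longrightarrow\O_X(-1)^{\oplus rd}\xrightarrow{\ \overline{A}\ }\O_X^{\oplus rd}\longrightarrow\E\longrightarrow0 .
\]
To evaluate the $\Tor$ term I would tensor the Koszul resolution $0\to\O_{\P^{n+1}}(-d)\xrightarrow{f}\O_{\P^{n+1}}\to\O_X\to0$ by $\E$: as $f$ vanishes along $X$, multiplication by $f$ is the zero endomorphism of the $\O_X$-module $\E$, so $\Tor_1^{\O_{\P^{n+1}}}(\E,\O_X)\cong\E(-d)$ (and the higher $\Tor$'s vanish). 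This is exactly \eqref{eq:ulrich-resolution}.

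Finally, to get \eqref{eq:ulrich-tensor-res} I would simply tensor \eqref{eq:ulrich-resolution} by the locally free sheaf $\E$; exactness is preserved (split into two short exact sequences, tensor, and reassemble), and the four terms become $\E\o\E(-d)$, $\E(-1)^{\oplus rd}$, $H^0(X,\E)\o\E\cong\E^{\oplus rd}$ and $\E\o\E$. The only input that is not a formal diagram chase is the linearity of the resolution over $\P^{n+1}$ in the first step; I expect that to be the sole real point, and it is covered by the standard structure theory of Ulrich (equivalently, maximal Cohen--Macaulay with linear resolution) modules recalled above.
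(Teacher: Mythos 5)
Your argument is correct and is essentially the paper's own proof: the paper likewise starts from the linear resolution $0\to\O_{\P^{n+1}}(-1)^{\oplus rd}\to\O_{\P^{n+1}}^{\oplus rd}\to i_\ast\E\to0$ coming from the definition of Ulrich bundle and obtains (\ref{eq:ulrich-resolution}) by restricting it to $X$, citing \cite{tripathi2016splitting,tripathi2017rank} for precisely the $\Tor$-computation you spell out, and then tensors by $\E$. You have simply written out in detail the steps the paper delegates to those references.
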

	\begin{proof}
		This is essentially \cite[§2]{tripathi2016splitting} and \cite[§2]{tripathi2017rank} applied to the resolution \[
		\begin{tikzcd}
			0\rar&\O_{\P^{n+1}}(-1)^{\oplus rd}\rar&\O_{\P^{n+1}}^{\oplus rd}\cong H^0(\P^{n+1},i_\ast\E)\o\O_{\P^{n+1}}\cong H^0(X,\E)\o\O_{\P^{n+1}}\rar&i_\ast\E\rar&0
		\end{tikzcd}
		\] of the extension $i_\ast\E$ on $\P^{n+1},$ with $i\colon X\hookrightarrow\P^{n+1}$ being the inclusion, given by the definition of Ulrich bundle (see \cite[Proposition 2.1 \& Theorem 2.3]{beauville2018introduction}). 
	\end{proof}
	
	\begin{rmk}
		Despite the embedding of every smooth quadric $Q\subset\P^{n+1}$ is Koszul (Example \ref{ex:koszul}), which implies that every $0$-regular vector bundle on $Q$ is $k$-normal for all $k\ge1,$ an Ulrich bundle on $Q$ cannot be projectively normal if $n\ge3$ because it is not ample by \cite[Theorem 1]{lopez2023geometrical} and \cite[Corollary 1.6]{ottaviani1988spinor}.
	\end{rmk}
	
	\begin{lemma}\label{lem:ulrich-tensor-2d-hyper}
		Let $S\subset\P^3$ be a smooth hypersurface of degree $d\ge2$ and let $\E$ be an Ulrich bundle of rank $r$ on $X$ such that $\det(\E)=\O_X(\frac{r}{2}(d-1)).$  Then:
		\begin{itemize}
			\item[(1)] $h^0(S,\E\o\E)=\chi(S,\E\o\E)=\frac{r^2d}{12}(d+1)(d+5).$
			\item[(2)] $h^0(S,\S^2\E)=\chi(S,\S^2\E)=\frac{rd}{24}\left(d+1\right)\left((d+5)r+6\right).$
			\item[(3)] $h^0(S,\S^3\E)=\chi(S,\S^3\E)=\frac{1}{72}rd(d+1)(r+2)(r+4+d(5r+2)).$
		\end{itemize} 
		In particular, if $\E$ is $2$-normal, then either $d=2,$ or $d=3$ and $r\ge3,$ or $d=4$ and $r\ge6.$
	\end{lemma}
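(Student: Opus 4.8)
The plan is to specialize the Euler-characteristic formulas of Lemma \ref{lem:ulrich-global-sections-tensor-surface} to the case of a hypersurface and then extract one elementary numerical inequality.

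First I would note that $\E\o\E$, $\S^2\E$ and $\S^3\E$ are all $0$-regular by Corollary \ref{cor:regular-symmetric}, so each has cohomology concentrated in degree $0$ and $h^0=\chi$; hence it suffices to evaluate the formulas of Lemma \ref{lem:ulrich-global-sections-tensor-surface} on $S\subset\P^3$. Writing $H=\O_S(1)$, I would record the invariants of a smooth degree-$d$ surface in $\P^3$: $H^2=d$, $K_S=(d-4)H$ by adjunction, and $\chi(\O_S)=\binom{d-1}{3}+1=\frac{d(d^2-6d+11)}{6}$. The hypothesis $\det(\E)=\O_S(\tfrac r2(d-1))$ gives $c_1(\E)=\tfrac r2(d-1)H$ — this is where it is used, since it is not automatic when $\Pic(S)$ fails to be cyclic (and it forces $r$ even when $d$ is even) — whence $c_1(\E)=\tfrac r2(K_S+3H)$, so parts (a), (b), (c) of Lemma \ref{lem:ulrich-global-sections-tensor-surface} apply verbatim with $K_S^2=d(d-4)^2$ and $K_S\cdot H=d(d-4)$. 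Substituting and collecting terms then yields (1)--(3); for instance formula (a) reads
\[
h^0(S,\E\o\E)=\frac{r^2}{4}\Big(17d+6d(d-4)+d(d-4)^2-4\big(\tbinom{d-1}{3}+1\big)\Big)=\frac{r^2 d(d+1)(d+5)}{12},
\]
and (2), (3) come out in exactly the same way. I expect the only mildly laborious step to be these polynomial simplifications, especially the one for $\S^3\E$; there is no conceptual difficulty.

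For the last claim, the point is that $2$-normality of $\E$ forces the surjectivity of $\S^2H^0(S,\E)\to H^0(S,\S^2\E)$, hence the inequality $\dim\S^2H^0(S,\E)\ge h^0(S,\S^2\E)$. Since $h^0(S,\E)=rd$ by Remark \ref{rmk:ulrich-facts}, this reads $\binom{rd+1}{2}\ge h^0(S,\S^2\E)$ with the right-hand side given by (2). Dividing by $\tfrac{rd}{2}>0$ and clearing denominators, a short rearrangement brings it to
\[
(d-1)\big(r(d-5)+6\big)\le 0,
\]
i.e. $r(5-d)\ge 6$ since $d\ge 2$. This already forces $d\le 4$, and checking $d=2,3,4$ in turn (noting that for $d$ even the hypothesis forces $r$ even, so $r\ge 2$ is automatic when $d=2$) gives exactly the stated trichotomy: $d=2$, or $d=3$ with $r\ge3$, or $d=4$ with $r\ge6$.
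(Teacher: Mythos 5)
Your proposal is correct and follows essentially the same route as the paper: substitute $K_S=(d-4)H$, $\chi(\O_S)=\binom{d-1}{3}+1$ and $c_1(\E)=\tfrac r2(d-1)H$ into Lemma \ref{lem:ulrich-global-sections-tensor-surface}(a)--(c), then use the necessary inequality $\dim\S^2H^0(S,\E)\ge h^0(S,\S^2\E)$ to get $(d-1)\left(r(d-5)+6\right)\le0$ and the stated trichotomy. The only cosmetic difference is in the $d=2$ case, where you invoke the parity of $r$ forced by the determinant hypothesis, while the paper notes more generally that $r\ge2$ follows from Remark \ref{rmk:ulrich-facts} (no twist $\O_X(k)$ is Ulrich on a hypersurface of degree $\ge2$); both are valid.
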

	
	Note that, by Remark \ref{rmk:ulrich-facts}, the set of $2$-dimensional smooth hypersurfaces supporting such Ulrich bundles contains the subset of hypersurfaces $S\subset\P^3$ with $\Pic(S)\cong\Z\cdot\O_S(1),$ which are very general in $|\O_{\P^3}(d)|$ by Noether-Lefschetz theorem \cite{lefschetz1924analysis, griffiths1985noether}.
	
	\begin{proof}
		By what said in Remark \ref{rmk:ulrich-facts}, the assumption on the Picard group forces $r\ge2.$ Denoting the class of a hyperplane section of $S\subset\P^3$ by $H,$ then $c_1(\E)=\frac{r}{2}(d-1)H$ (Remark \ref{rmk:ulrich-facts}).  Since
		$$\chi(S,\O_S)=1+\binom{d-1}{3}=\frac{d(d^2-6d+11)}{6},$$  formulae (1)-(2)-(3) are obtained by substituting these in Lemma \ref{lem:ulrich-global-sections-tensor-surface}(a-b-c). For the last part, observe that the $2$-normality of $\E$ implies the inequality
		$$\dim \S^2H^0(S,\E)=\frac{rd(rd+1)}{2}<h^0(X,\S^2\E),$$ which is satisfied in the claimed ranges.
	\end{proof}
	
	On the contrary, in the situation of the above Lemma, the difference 
	$$\dim\S^3H^0(S,\E)-h^0(S,\S^3\E)=\frac{rd}{72} (d-1) (r-2) (d (7 r+2)+r+8)$$ is negative if and only if $r=1$ (which happens very rarely), it's zero for rank $2$ (that is for Pfaffian surfaces), and it is always positive for $r\ge3.$ Therefore the $3$-normality of $\E$ usually does not imposes any restriction.
	
	We now move to hypersurfaces in $\P^4.$ We begin with the calculations of Chern classes and Euler characteristics of tensor powers and symmetric powers. Note that it is no longer granted that the tensor operations of Ulrich bundles are again $0$-regular.
	
	\begin{lemma}\label{lem:ulrich-tensor-3d-hyper}
		Let $X\subset\P^4$ be a smooth hypersurface of degree $d\ge1$ and let $\E$ be a rank $r$ Ulrich bundle on $X.$ Then:
		\begin{itemize}
			\item[(i)] $c_3(\E)=\frac{rd}{48}(d-1)^2(r-2)(rd-r+2).$
			\item[(ii)] $c_3(\E\o\E)=\frac{r^2d}{12} (d-1)^2 \left(r^2-2\right) \left(2r^2(d-1)+3-d\right).$
			\item[(iii)] $c_3(\S^2\E)=\frac{rd}{48} (d-1)^2 (r+2) \left(r^2+r-4\right) \left(r^2(d-1)+2\right).$
			\item[(iv)] $\chi(X,\E\otimes \E)=h^0(X,\E\o\E)-h^1(X,\E\o\E)=\frac{r^2d}{8}(d+1)(d+3).$
			\item[(v)] $\chi(X,\S^2\E)=h^0(X,\S^2\E)-h^1(X,\S^2\E)=\frac{rd}{48}(d+1)(d+3)(3r+4-d).$
		\end{itemize}
	\end{lemma}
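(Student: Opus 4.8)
The plan is to reduce everything to Hirzebruch--Riemann--Roch on the threefold $X$, using that $\Pic(X)\cong\Z\cdot H$ by the Lefschetz hyperplane theorem, so that all Chern classes of $\E$ and of its tensor and symmetric powers are determined by their intersection numbers against powers of $H$. The geometric input needed is $H^3=d$, $K_X=(d-5)H$ and $c(T_X)=(1+H)^5/(1+dH)$, which yields the Todd class $\mathrm{td}(X)$.

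\textbf{Step 1 (Chern classes of $\E$).} From Remark \ref{rmk:ulrich-facts} we already have $c_1(\E)=\tfrac r2(d-1)H$ and $h^0(X,\E)=rd$. The Ulrich condition gives $\chi(X,\E(-pH))=0$ for $p=1,2,3$, so the Hilbert polynomial $\chi(\E(tH))$, a cubic in $t$ whose leading coefficient is automatically $\tfrac{rH^3}{6}=\tfrac{rd}{6}$, must equal $\tfrac{rd}{6}(t+1)(t+2)(t+3)=rd\binom{t+3}{3}$. On the other hand $\chi(\E(tH))=\int_X\mathrm{ch}(\E)\,e^{tH}\,\mathrm{td}(X)$, a polynomial in $t$ whose coefficients are intersection numbers of $c_1(\E),c_2(\E),c_3(\E)$ with powers of $H$. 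Matching the coefficients of $t^2,t^1,t^0$ with those of $rd\binom{t+3}{3}$ gives three linear equations: the $t^2$ one recovers $c_1(\E)\cdot H^2=\tfrac{rd(d-1)}{2}$ (consistently with the known $c_1$), and the $t^1,t^0$ ones pin down $c_2(\E)\cdot H$ and $c_3(\E)$. The resulting value of $c_3(\E)$ is precisely the expression in (i). (Alternatively one may invoke the threefold analogue of Casnati's formula (\ref{eq:ulrich-surface-c2}) for the Chern classes of an Ulrich sheaf on a hypersurface.)

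\textbf{Step 2 (items (ii)--(v)).} For (ii) and (iii), substitute the values of $c_1(\E),c_2(\E),c_3(\E)$ from Step 1 into the universal formulae for the third Chern class of a tensor product and of a second symmetric power in terms of the rank and the $c_i$'s --- these are recorded in Lemmas \ref{app:tensor-chern}, \ref{app:S^k-S^3} and Corollary \ref{app:symmetric-chern} --- and simplify the resulting polynomials in $r$ and $d$. For (iv) and (v), compute the Euler characteristics through the Chern character, using $\mathrm{ch}(\E\o\E)=\mathrm{ch}(\E)^2$ and $\mathrm{ch}(\S^2\E)=\tfrac12\bigl(\mathrm{ch}(\E)^2+\psi^2\mathrm{ch}(\E)\bigr)$, where the degree-$k$ part of $\psi^2\mathrm{ch}(\E)$ is $2^k\,\mathrm{ch}_k(\E)$; then $\chi(X,\E\o\E)=\int_X\mathrm{ch}(\E)^2\,\mathrm{td}(X)$ and similarly for $\S^2\E$. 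Plugging in the Chern data of Step 1 and integrating produces the stated formulae. Note that here one only asserts an identity with $\chi$, not with $h^0$: the tensor and symmetric powers of an Ulrich bundle on a threefold need not be $0$-regular, so no vanishing of higher cohomology is claimed.

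The work is entirely computational; the only points deserving attention are the reduction $\chi(\E(tH))=rd\binom{t+3}{3}$ from the Ulrich vanishings (equivalently, that $\E$ has degree $rd$ as a sheaf on $\P^4$, i.e. $h^0(X,\E)=rd$) and the fact that, because $\Pic(X)$ has rank one, the three coefficient matchings in Step 1 suffice to determine $c_1,c_2,c_3$ of $\E$ with no remaining freedom. There is no conceptual obstacle. As a consistency check, all five displayed formulae vanish when $d=1$, where $X\cong\P^3$ and the only rank-$r$ Ulrich bundle is $\O_X^{\oplus r}$, and $c_3(\E)$ vanishes when $r=2$.
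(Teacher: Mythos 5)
Your computations of the Chern classes and of the Euler characteristics are fine and run essentially parallel to the paper's proof: item (i) via Hirzebruch--Riemann--Roch together with the Ulrich vanishings (the paper cites \cite[Proposition 3.7]{benedetti2023projective} or, equivalently, applies HRR to $\chi(X,\E)=rd$), items (ii)--(iii) by substituting into the universal formulae of Lemma \ref{app:tensor-chern} and Corollary \ref{app:symmetric-chern}, and the numerical values in (iv)--(v) again by HRR. The subtle point you flag --- that on the threefold $X$ the classes $c_2(\E),c_3(\E)$ are determined by intersection numbers with $H$ --- is indeed harmless, since for a smooth hypersurface in $\P^4$ the even-degree cohomology is generated by powers of $H$.

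However, there is a genuine gap: items (iv) and (v) do not merely assert the value of $\chi$, they assert the identity $\chi(X,\E\o\E)=h^0(X,\E\o\E)-h^1(X,\E\o\E)$ (and likewise for $\S^2\E$), i.e. the vanishing $H^2(X,\E\o\E)=H^3(X,\E\o\E)=0$ and the same for $\S^2\E$. You explicitly decline to prove any such vanishing ("no vanishing of higher cohomology is claimed"), so your argument proves strictly less than the statement; and this is not a cosmetic point, since the inequality $h^0\ge\chi$ obtained from it is exactly what drives the proof of Theorem \ref{thm:projective-normality-hypersurface}(b). The missing step is supplied by the resolution (\ref{eq:ulrich-tensor-res}) of Lemma \ref{lem:ulrich-tensor-res}: twisting it by $\O_X(p)$ and chasing cohomology as in \cite[Proposition B.1.2(i)]{lazarsfeld2017positivity}, the Ulrich vanishings and the $0$-regularity of $\E$ give $H^2(X,\E\o\E(p))=H^3(X,\E\o\E(p))=0$ for all $p\ge-2$; taking $p=0$ and using that $\S^2\E$ is a direct summand of $\E\o\E$ in characteristic zero yields the required vanishings, after which HRR gives (iv) and (v) as you computed. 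A minor further slip: your consistency check is wrong for (iv) and (v), which do not vanish at $d=1$ (they give $r^2$ and $\binom{r+1}{2}$, the correct values of $h^0$ for $\O_{\P^3}^{\oplus r}$), though this does not affect the argument.
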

	
	\begin{proof}
		For (i),  see \cite[Proposition 3.7]{benedetti2023projective} or just apply Hirzebruch-Riemann-Roch theorem to $\chi(X,\E)=rd.$  Lefschetz theorem tells that the restriction map $\Pic(\P^4)\to\Pic(X)$ induces an isomorphism \cite[Example 3.1.25]{lazarsfeld2017positivity}. Hence, letting $H$ be the class of a hyperplane section of $X\subset\P^4,$  we have $c_1(\E)=\frac{r}{2}(d-1)H $ (Remark \ref{rmk:ulrich-facts}).  Using \cite[Lemma 3.2(ii)]{lopez2023varieties} and (i) on Lemma \ref{app:tensor-chern}(iii) and Corollary \ref{app:symmetric-chern}(iii) one obtains (ii) and (iii) respectively. Now, observe that 
		\begin{equation}\label{eq:ulrich-tensor-vanishing-3d-hyper}
			H^2(X,\E\o\E(p))=H^3(X,\E\o\E(p))=0\hspace{1cm} \mbox{for}\ p\ge-2
		\end{equation} by \cite[Proposition B.1.2(i)]{lazarsfeld2017positivity} applied to the 	resolution (\ref{eq:ulrich-tensor-res}) twisted by $\O_X(p).$ Then (iv) and (v) follow from Hirzebruch-Riemann-Roch theorem for $\E\o\E$ and $\S^2\E$ respectively.
	\end{proof}
	
	\begin{proof}[Proof of Theorem \ref{thm:projective-normality-hypersurface}]
		Item (a) is an immediate consequence of Lemma \ref{lem:ulrich-tensor-2d-hyper}. For (b), note that we must have $r\ge2$ by Lefschetz theorem \cite[Example 3.1.25]{lazarsfeld2017positivity} and Remark \ref{rmk:ulrich-facts}. Thanks to Lemma \ref{lem:ulrich-tensor-3d-hyper}(iv), we immediately see that the inequality $$h^0(X,\E\o\E)\ge\chi(X,\E\o\E)>\dim H^0(X,\E)^{\o2}=r^2d^2$$ holds for every pair $(r,d).$ Therefore $\mu_\E$ is never surjective. Finally, by Lemma \ref{lem:ulrich-tensor-3d-hyper}(v), we can check that $$h^0(X,\S^2\E)\ge\chi(X,\S^2\E)>\dim \S^2 H^0(X,\E)=\frac{rd(rd+1)}{2}$$ holds for if $r>\frac{d+4}{3}.$ In particular $\E$ cannot be projectively normal for all such ranks.
	\end{proof}
	
	The bound on the rank, possibly not optimal, is necessary. As we are going to see, Theorem \ref{thm:projective-normality-hypersurface}(b) is sharp for the $2$-normality of Ulrich bundles on $3$-dimensional hypersurfaces of degree $5.$
	
	\begin{rmk}\label{rmk:hyper}
		Let $X\subset\P^{n+1}$ be a hypersurface of dimension $n=3$ or $4$ and degree $d\ge3.$ Suppose $X$ supports an Ulrich bundle $\E$ of rank $r=2$ or $r=3.$ Then $\E$ is $2$-normal. If $n=4$ and $r=3,$ then $\E$ is also $3$-normal.\footnote{Except for $r=2$ with $n=3$ and $d\le 5,$ these hypersurfaces live in a closed subset of $|\O_{\P^{n+1}}(d)|$ (see \cite[Proposition 8.9]{beauville2000determinantal} and \cite[Corollary 1 \& Theorem 2]{lopez2024ulrich}). Moreover, a pfaffian hypersurface of degree $d$ in $\P^4$ or in $\P^5,$ which are exactly those supporting Ulrich bundles of rank $2,$ contains no lines if $d\ge 6$ or $d\ge 8$ respectively. This means that all Ulrich bundles on such hypersurfaces are very ample \cite[Theorem 1]{lopez2023geometrical}.} 
		
		Indeed, as $\L^2\E,$ being isomorphic either to $\O_X(d-1)$ for $n=3$ or to $\E^\ast(3(d-1)/2)$ when $n=4,$ is aCM, we know from \cite[Proposition 5.1]{tripathi2017rank} that $\L^2M_\E$ is aCM as well. Since $H^2(X,\L^2M_\E)=0,$ the first part is provided by Example \ref{ex:2-normality}. Assuming $n=4$ and $r=3,$ by applying \cite[Example B.1.3]{lazarsfeld2017positivity} to the long exact sequence 
		\[
		0\to\L^3M_\E\to\L^3H^0(X,\E)\o\O_X\to\L^2H^0(X,\E)\o\E\to H^0(X,\E)\o\S^2\E\to \S^3\E\to 0,
		\] given by (\ref{eq:fact1}), we deduce that $\E$ is $3$-normal if (and only if) $H^3(X,\L^3M_\E)=0.$ As $\L^3M_\E$ is aCM \cite[Theorem 5.7(a)]{ravindra2019rank}, the claim follows.
	\end{rmk}
	
	The behaviour of $2$-normality of rank $2$ Ulrich bundles on pfaffian surfaces and paffian threefolds or fourfolds is different: while no (very general) pfaffian surface of degree $d\ge 5$ supports a $2$-normal Ulrich bundle of rank $2$ (Proposition \ref{lem:ulrich-tensor-2d-hyper}), all rank $2$ Ulrich bundles on pfaffian $3$-folds and $4$-folds of degree $d\ge3$ are $2$-normal.

	\appendix
	\section{Chern classes computations}
	We calculate the first three Chern classes of $\E\o\E,\S^2\E,\L^2\E$ and the first two Chern classes of $\S^3\E$ for a vector bundle $\E$ on a smooth variety.
	
	\begin{lemma}\label{app:tensor-chern}
		Let $\E$ be a vector bundle of rank $r$ on a smooth variety. Then:
		\begin{itemize}
			\item[(i)] $c_1(\E\o\E)=2rc_1(\E).$
			\item[(ii)] $c_2(\E\otimes\E)=(2r^2-r-1)c_1(\E)^2+2rc_2(\E).$
			\item[(iii)] $c_3(\E\o\E)=\frac{2}{3}(2 r^3-3 r^2-2 r+3)c_1(\E)^3 +(4 r^2-2 r-4)c_1(\E)c_2(\E)+2rc_3(\E).$
		\end{itemize}
	\end{lemma}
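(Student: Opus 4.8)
The plan is to argue by the splitting principle. Passing to a suitable flag bundle $\sigma\colon Y\to X$ on which $\sigma^\ast\E$ acquires a filtration with line bundle quotients, write $a_1,\dots,a_r$ for the resulting Chern roots of $\E$, so that $c_i(\E)=e_i(a_1,\dots,a_r)$. Since Chern classes commute with pullback and $\sigma^\ast$ is injective on the relevant Chow (or cohomology) groups, it suffices to verify the three identities at the level of these roots. The key observation is that $\E\o\E$ then has the $r^2$ Chern roots $\{a_i+a_j : 1\le i,j\le r\}$ (ordered pairs, diagonal included), so that $c_k(\E\o\E)=e_k\big(\{a_i+a_j\}_{i,j}\big)$ for $k=1,2,3$.

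First I would record $c_1(\E\o\E)=\sum_{i,j}(a_i+a_j)=2r\sum_i a_i=2rc_1(\E)$, which is (i). For $c_2$ and $c_3$ I would pass to power sums: writing $P_\ell=\sum_{i,j}(a_i+a_j)^\ell$ and $p_\ell=\sum_i a_i^\ell$, expanding the binomial gives $P_1=2rp_1$, $P_2=2rp_2+2p_1^2$, and $P_3=2rp_3+6p_1p_2$. Newton's identities for the $r^2$ roots then yield $c_2(\E\o\E)=\tfrac12(P_1^2-P_2)$ and $c_3(\E\o\E)=\tfrac16(P_1^3-3P_1P_2+2P_3)$. Substituting the expressions for $P_1,P_2,P_3$ and then converting $p_1,p_2,p_3$ back to Chern classes of $\E$ via $p_1=c_1(\E)$, $p_2=c_1(\E)^2-2c_2(\E)$, $p_3=c_1(\E)^3-3c_1(\E)c_2(\E)+3c_3(\E)$ (Newton again, now for $\E$) produces the stated polynomials after collecting the coefficients of $c_1^3$, $c_1c_2$ and $c_3$.

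The computation is entirely mechanical; the only points requiring care are that the tensor square has $r^2$ roots rather than $\binom{r+1}{2}$ (the diagonal pairs $i=j$ must be retained, unlike for $\S^2\E$ or $\L^2\E$), and the bookkeeping of the integer coefficients in the final substitution. I do not anticipate any conceptual obstacle; indeed the very same scheme, applied instead to the root sets $\{a_i+a_j:i\le j\}$ and $\{a_i+a_j:i<j\}$, simultaneously handles the symmetric- and exterior-square statements recorded in the companion lemmas of this appendix.
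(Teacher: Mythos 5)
Your argument is correct: the identities hold after pulling back to a flag bundle, the roots of $\E\o\E$ are indeed the $r^2$ ordered sums $a_i+a_j$ (diagonal included), and the power-sum bookkeeping checks out — $P_1=2rp_1$, $P_2=2rp_2+2p_1^2$, $P_3=2rp_3+6p_1p_2$, and Newton's identities then reproduce exactly the coefficients $(2r^2-r-1,\,2r)$ in degree $2$ and $\bigl(\tfrac{2}{3}(2r^3-3r^2-2r+3),\,4r^2-2r-4,\,2r\bigr)$ in degree $3$. The route differs from the paper's: there, (i) is quoted from the literature and (ii)–(iii) are extracted from the multiplicativity of the Chern character, $\mathrm{ch}(\E\o\E)=\mathrm{ch}(\E)^2$ with $\mathrm{ch}=\rk+c_1+\frac12(c_1^2-2c_2)+\frac16(c_1^3-3c_1c_2+3c_3)+\cdots$, by equating the degree-$2$ and degree-$3$ terms. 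The two arguments are close cousins — the identity $\mathrm{ch}(\E\o\E)=\mathrm{ch}(\E)^2$ is itself the statement $\sum_{i,j}e^{a_i+a_j}=\bigl(\sum_i e^{a_i}\bigr)^2$ for the same root set you use — but the Chern-character route is shorter because the conversion between power sums and Chern classes is already packaged in the series for $\mathrm{ch}$, whereas your direct splitting-principle computation is more self-contained (no appeal to the Chern character) and, as you note, uniformly covers $\S^2\E$ and $\L^2\E$ by restricting to the root sets $\{a_i+a_j: i\le j\}$ and $\{a_i+a_j: i<j\}$; the paper instead handles those companion lemmas by a separate induction via the splitting principle, so your scheme would in fact streamline the appendix.
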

	\begin{proof}
		Item (i) is well known, see for instance \cite[Proposition 5.18]{eisenbud20163264}. The Chern character $$\mr{ch}=\rk+c_1+\frac{1}{2}(c_1^2-2c_2)+\frac{1}{6}(c_1^3-3c_1c_2+3c_3)+\cdots$$ satisfies $\mr{ch}(\E)^2=\mr{ch}(\E\o\E)$ \cite[§5.5.2]{eisenbud20163264}, thus (ii) and (iii) are obtained by equating the terms of degree $2$ and $3$ respectively.		
	\end{proof}
	
	\begin{lemma}\label{app:chern-S^k}
		For a vector bundle $\E$ of rank $r$ on a smooth variety we have $$c_1(\S^k\E)=\binom{r+k-1}{k-1}c_1(\E)=\binom{r+k-1}{r}c_1(\E)$$ for all $k\ge1.$
	\end{lemma}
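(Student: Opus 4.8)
The plan is to compute by the splitting principle. By passing to a flag bundle (over which pullback is injective on Chow groups, so no information is lost) we may assume that $\E$ carries a filtration $0=F_0\subset F_1\subset\cdots\subset F_r=\E$ by subbundles with line bundle quotients $L_i=F_i/F_{i-1}$; write $a_i=c_1(L_i)$, so that $c_1(\E)=\sum_{i=1}^r a_i$. First I would note that $\S^k\E$ inherits the induced filtration whose associated graded is
\[
\bigoplus_{\substack{m_1,\dots,m_r\ge0\\ m_1+\cdots+m_r=k}} L_1^{\o m_1}\o\cdots\o L_r^{\o m_r},
\]
the sum running over the $\binom{r+k-1}{k}$ monomials of degree $k$. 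Since $c_1$ is additive on filtrations and $c_1\!\left(L_1^{\o m_1}\o\cdots\o L_r^{\o m_r}\right)=\sum_i m_i a_i$, this gives
\[
c_1(\S^k\E)=\sum_{m_1+\cdots+m_r=k}\ \sum_{i=1}^r m_i a_i .
\]

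Next I would exploit symmetry. The right-hand side is a symmetric, degree-one polynomial in $a_1,\dots,a_r$, hence a scalar multiple $c=c(r,k)$ of $a_1+\cdots+a_r=c_1(\E)$, with $c$ independent of $\E$. To pin down $c$, observe that the coefficient of a fixed $a_j$ is $\sum_{m_1+\cdots+m_r=k} m_j$, and by the $\mathfrak S_r$-symmetry this is the same for every $j$; summing over $j$ gives $r\cdot c=\sum_{m_1+\cdots+m_r=k}(m_1+\cdots+m_r)=k\binom{r+k-1}{k}$, so $c=\frac{k}{r}\binom{r+k-1}{k}$.

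Finally, a one-line rearrangement of factorials,
\[
\frac{k}{r}\binom{r+k-1}{k}=\frac{(r+k-1)!}{(k-1)!\,r!}=\binom{r+k-1}{r}=\binom{r+k-1}{k-1},
\]
yields the claimed identity. The only point needing (mild) care is the combinatorial bookkeeping of the filtration on $\S^k\E$ and the symmetry/counting step; there is no genuine obstacle. (Alternatively one could induct on $r$ using the filtration of $\S^k\E$ coming from a quotient line bundle $\E\twoheadrightarrow L$ together with the hockey-stick identity $\sum_{m=0}^{k}\binom{r+m-2}{r-1}=\binom{r+k-1}{r}$, but the splitting-principle computation is cleaner and also proves the parallel statement for $\L^k\E$ and $\E^{\o k}$ used elsewhere.)
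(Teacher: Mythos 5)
Your argument is correct: the filtration of $\S^k\E$ with associated graded $\bigoplus L_1^{\o m_1}\o\cdots\o L_r^{\o m_r}$, the additivity of $c_1$, and the symmetry count $r\cdot c=\sum_{m_1+\cdots+m_r=k}(m_1+\cdots+m_r)=k\binom{r+k-1}{k}$ all check out, and the factorial rearrangement $\frac{k}{r}\binom{r+k-1}{k}=\binom{r+k-1}{r}=\binom{r+k-1}{k-1}$ gives exactly the stated identity (sanity checks: $r=1$ gives $k\,c_1(L)$; $r=2$, $k=2$ gives $3c_1(\E)$). The difference from the paper is that the paper does not prove this lemma at all: it simply cites Rubei's paper on slopes of Schur functors for the formula. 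Your proof is therefore a self-contained replacement for that citation, and it is very much in the spirit of the paper's own appendix, where the neighboring computations (for $\L^2\E$ in Lemma \ref{app:exterior-chern} and for $c_2(\S^3\E)$ in Lemma \ref{app:S^k-S^3}) are carried out by exactly this splitting-principle technique; your symmetry-plus-counting step is arguably cleaner than the inductive bookkeeping used there, and as you note it extends with no extra work to $\L^k\E$ and $\E^{\o k}$. The only stylistic caveat is that invoking the flag bundle and injectivity of pullback on Chow groups should be stated (as you do) rather than silently assumed, since $c_1$ of a filtered bundle equals the sum of the $c_1$'s of the graded pieces only after that reduction or via Whitney's formula on each step of the filtration.
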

	
	\begin{proof}
		See \cite[p. 523]{rubei2013slope}.
	\end{proof}
	
	\begin{lemma}\label{app:exterior-chern}
		Let $\E$ be a vector bundle of rank $r$ on a smooth variety. Then:
		\begin{itemize}
			\item[(i)] $c_1(\L^2\E)=(r-1)c_1(\E).$
			\item[(ii)] $c_2(\L^2\E)=\binom{r-1}{2}c_1(\E)^2+(r-2)c_2(\E).$
			\item[(iii)] $c_3(\L^2\E)=\binom{r-1}{3}c_1(\E)^3+(r-2)^2c_1(\E)c_2(\E)+(r-4)c_3(\E).$
		\end{itemize}
	\end{lemma}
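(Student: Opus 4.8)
The plan is to argue just as in the proof of Lemma~\ref{app:tensor-chern}, combining the splitting principle with the multiplicativity of the Chern character. By the splitting principle we may assume $\E$ has Chern roots $a_1,\dots,a_r$; then $\L^2\E$ has Chern roots $\{a_i+a_j:1\le i<j\le r\}$. Since
\[
\mr{ch}(\E)^2=\Bigl(\sum_i e^{a_i}\Bigr)^2=\sum_{i,j}e^{a_i+a_j}=\sum_i e^{2a_i}+2\sum_{1\le i<j\le r} e^{a_i+a_j},
\]
we obtain the identity
\[
\mr{ch}(\L^2\E)=\tfrac12\Bigl(\mr{ch}(\E)^2-\sum_i e^{2a_i}\Bigr),
\]
where the degree-$n$ component of $\sum_i e^{2a_i}$ is $2^n\,\mr{ch}_n(\E)$.

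From here everything is mechanical. Writing $\mr{ch}(\E)=r+c_1+\tfrac12(c_1^2-2c_2)+\tfrac16(c_1^3-3c_1c_2+3c_3)+\cdots$ with $c_i=c_i(\E)$, the degree-$0$ part of the identity gives $\rk(\L^2\E)=\tfrac12(r^2-r)=\binom r2$, and the degree-$1$ part gives $c_1(\L^2\E)=\tfrac12(2rc_1-2c_1)=(r-1)c_1$, which is~(i). For~(ii) and~(iii) I would extract $\mr{ch}_2(\L^2\E)$ and $\mr{ch}_3(\L^2\E)$ from the identity above and then convert back to Chern classes via the standard Newton-type relations
\[
c_2(\mc F)=\tfrac12 c_1(\mc F)^2-\mr{ch}_2(\mc F),\qquad c_3(\mc F)=2\,\mr{ch}_3(\mc F)-c_1(\mc F)\,\mr{ch}_2(\mc F)+\tfrac16 c_1(\mc F)^3,
\]
applied to $\mc F=\L^2\E$, for which $c_1(\mc F)=(r-1)c_1$ is already known. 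A short computation then produces the coefficients $\binom{r-1}{2}$ and $r-2$ in degree $2$, and $\binom{r-1}{3}$, $(r-2)^2$, $r-4$ in degree $3$.

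There is no genuine obstacle here: the whole argument is a finite symmetric-function manipulation, and the only point requiring mild care is the bookkeeping of binomial coefficients in the degree-$3$ conversion. As a sanity check one can verify directly that for $r=2$ all three formulas reduce to $c_1(\det\E),\,0,\,0$, consistent with $\L^2\E=\det\E$, and for $r=3$ that $c_1(\L^2\E)=2c_1(\E)$, consistent with $\L^2\E\cong\E^\ast\o\det\E$. Alternatively, one may bypass the Chern character and expand the elementary symmetric polynomials of the $a_i+a_j$ directly in terms of those of the $a_i$; this is the same computation in a different guise.
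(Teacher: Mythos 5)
Your argument is correct, and I checked that the mechanical steps you defer do close: from $\mr{ch}(\L^2\E)=\tfrac12\bigl(\mr{ch}(\E)^2-\sum_i e^{2a_i}\bigr)$ one gets $\mr{ch}_2(\L^2\E)=\tfrac12\bigl((r-1)c_1^2-2(r-2)c_2\bigr)$ and $\mr{ch}_3(\L^2\E)=\tfrac{r-1}{6}c_1^3-\tfrac{r-2}{2}c_1c_2+\tfrac{r-4}{2}c_3$, and your Newton-type conversion then gives exactly the coefficients $\binom{r-1}{2},\,r-2$ and $\binom{r-1}{3},\,(r-2)^2,\,r-4$ (the $c_1^3$-coefficient being $\tfrac{r-1}{6}\bigl((r-1)^2-3(r-1)+2\bigr)=\binom{r-1}{3}$). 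However, your route differs from the paper's. The paper obtains (i) not from Chern roots of $\L^2\E$ but from the splitting $\E\o\E\cong\S^2\E\oplus\L^2\E$ together with the already-computed $c_1(\E\o\E)$ and $c_1(\S^k\E)$; for (ii) and (iii) it does \emph{not} use the Chern character at all, but argues by induction on $r$ via the splitting principle, writing $\E=L\oplus\F$ with $L$ a line bundle, using $\L^2\E\cong\L^2\F\oplus(L\o\F)$, the Whitney formula, and the standard expressions for $c_k(L\o\F)$, then invoking the inductive hypothesis on $\F$. Your approach buys a closed-form, non-inductive computation in one shot (essentially the identity $\mr{ch}(\L^2\E)=\tfrac12(\mr{ch}(\E)^2-\psi^2$-term$)$), at the cost of the slightly fussier bookkeeping of converting $\mr{ch}_2,\mr{ch}_3$ back to Chern classes; the paper's induction stays entirely within Chern-class algebra and parallels its proof of the $\S^3$ formulas, but requires carrying the longer Whitney expansions. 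Either way the statement is established, and your $r=2$, $r=3$ sanity checks are a nice touch.
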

	\begin{proof} For (i), recall that $\E\o\E\cong\S^2\E\oplus\L^2\E$ and apply Lemmas \ref{app:tensor-chern}(i)--\ref{app:chern-S^k} to the equality $c_1(\E\o\E)=c_1(\S^2\E)+c_1(\L^2\E).$ We prove (ii) and (iii) by induction on $r\ge 1$ by using the splitting principle. Since they are clearly true for $r=1,$ we suppose $r\ge 2$ and that $\E$ decomposes as $\E=L\oplus \F$ with $L$ being a line bundle and $\F$ being a direct sum of $(r-1)$ line bundles. Then, in virtue of $\L^2\E\cong\L^2\F\oplus(L\o\F),$ which yields $c_k(L\o\F)=\sum_{i=0}^k\binom{r-1-k+i}{i}c_1(L)^ic_{k-i}(\F),$ we get
		\begin{align*}
			c_1(\E)=& c_1(L)+c_1(\F), & c_1(\L^2\E)=&c_1(\L^2\F)+c_1(L\o\F),\\
			c_2(\E)=&c_1(L)c_1(\F)+c_2(\F), & c_2(\L^2\E)=&c_2(\L^2\F)+c_1(L\o\F)c_1(\L^2\F)+c_2(L\o\F),\\
			c_3(\E)=&c_1(L)c_2(\F)+c_3(\F), & c_3(\L^2\E)=&c_3(\L^2\F)+c_2(\L^2\F)c_1(L\o\F)+c_1(\L^2\F)c_2(L\o\F)+c_3(L\o\F),
		\end{align*}
		Using the inductive hypothesis on $\F$ we immediately get the desired formulae.				
	\end{proof}
	
	\begin{cor}\label{app:symmetric-chern}
		Let $\E$ be a vector bundle of rank $r$ on a smooth variety. Then:
		\begin{itemize}
			\item[(i)] $c_1(\S^2\E)=(r+1)c_1(\E).$
			\item[(ii)] $c_2(\S^2\E)=\frac{(r+2)(r-1)}{2}c_1(\E)^2+(r+2)c_2(\E).$
			\item[(iii)] $c_3(\S^2\E)=\frac{(r+3)(r-1)(r-2)}{6}c_1(\E)^3+(r^2+2r-4)c_1(\E)c_2(\E)+(r+4)c_3(\E).$
		\end{itemize}
	\end{cor}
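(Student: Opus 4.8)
The plan is to exploit the characteristic-zero decomposition $\E\o\E\cong\S^2\E\oplus\L^2\E$, which gives the Whitney sum identity $c(\E\o\E)=c(\S^2\E)\cdot c(\L^2\E)$ in the Chow ring of the base variety. Since Lemma \ref{app:tensor-chern} already records $c_1,c_2,c_3$ of $\E\o\E$ and Lemma \ref{app:exterior-chern} records $c_1,c_2,c_3$ of $\L^2\E$, it then suffices to solve this total Chern class identity degree by degree for the three unknowns $c_1(\S^2\E),c_2(\S^2\E),c_3(\S^2\E)$.

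Concretely, in degree $1$ one reads off $c_1(\S^2\E)=c_1(\E\o\E)-c_1(\L^2\E)=2rc_1(\E)-(r-1)c_1(\E)=(r+1)c_1(\E)$, which is (i) (this also agrees with the $k=2$ case of Lemma \ref{app:chern-S^k}). In degree $2$ the Whitney formula reads $c_2(\E\o\E)=c_2(\S^2\E)+c_1(\S^2\E)c_1(\L^2\E)+c_2(\L^2\E)$; substituting (i), Lemma \ref{app:exterior-chern}(i)--(ii) and Lemma \ref{app:tensor-chern}(ii) and isolating $c_2(\S^2\E)$ gives, after collecting the coefficients of $c_1(\E)^2$ and $c_2(\E)$, exactly (ii). In degree $3$ one has $c_3(\E\o\E)=c_3(\S^2\E)+c_2(\S^2\E)c_1(\L^2\E)+c_1(\S^2\E)c_2(\L^2\E)+c_3(\L^2\E)$; plugging in (i), (ii), Lemma \ref{app:exterior-chern}(i)--(iii) and Lemma \ref{app:tensor-chern}(iii) and solving for $c_3(\S^2\E)$ yields (iii).

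The only genuine work is the polynomial bookkeeping in $r$: verifying that the coefficient of $c_1(\E)^3$ collapses to $\tfrac{(r+3)(r-1)(r-2)}{6}$, that of $c_1(\E)c_2(\E)$ to $r^2+2r-4$, and that of $c_3(\E)$ to $r+4$. This is elementary but somewhat tedious. As an independent cross-check one may instead compute $\mathrm{ch}(\S^2\E)=\tfrac12\bigl(\mathrm{ch}(\E)^2+\psi^2\mathrm{ch}(\E)\bigr)$, where $\psi^2$ multiplies the degree-$k$ component by $2^k$, and extract the same three Chern classes by expanding $\mathrm{ch}$ through degree $3$; I would nonetheless present the splitting argument as the main proof, since all the ingredients are already furnished by the two preceding lemmas.
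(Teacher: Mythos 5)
Your proposal is correct and is essentially identical to the paper's own proof: the paper also deduces the formulae from the splitting $\E\o\E\cong\S^2\E\oplus\L^2\E$ via the Whitney-type identity $c_k(\E\o\E)=\sum_{i=0}^k c_i(\S^2\E)c_{k-i}(\L^2\E)$, combined with Lemmas \ref{app:tensor-chern}, \ref{app:chern-S^k} and \ref{app:exterior-chern}. The degree-by-degree bookkeeping you describe is exactly what is needed, so nothing is missing.
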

	\begin{proof}
		Thanks to the decomposition $\E\o\E\cong\S^2\E\oplus\L^2\E,$ the above formulae immediately follow from $$c_k(\E\o\E)=\sum_{i=0}^kc_i(\S^2\E)c_{k-i}(\L^2\E)$$ and from Lemmas \ref{app:tensor-chern}--\ref{app:chern-S^k}--\ref{app:exterior-chern}.
	\end{proof}
	
	\begin{lemma}\label{app:S^k-S^3}
		Let $\E$ be a vector bundle of rank $r$ on a smooth variety. Then:
		\begin{itemize}
			\item[(i)] $c_1(\S^3\E)=\frac{(r+2)(r+1)}{2}c_1(\E).$
			\item[(ii)] $c_2(\S^3\E)=\frac{1}{8}(r-1)(r+2)(r^2+5r+8)c_1(\E)^2+\frac{1}{2}(r+2)(r+3)c_2(\E).$
		\end{itemize}
	\end{lemma}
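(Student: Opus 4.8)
The plan is to use the \emph{splitting principle}, writing $\E$ formally as a sum of line bundles with Chern roots $a_1,\dots,a_r$, so that $\S^3\E$ has Chern roots $a_i+a_j+a_k$ over all multi-indices $1\le i\le j\le k\le r$ (there are $\binom{r+2}{3}$ of them). Part (i) is then immediate from Lemma \ref{app:chern-S^k} applied with $k=3$, since $\binom{r+2}{2}=\tfrac12(r+1)(r+2)$; it will also reappear for free from the computation below. For (ii) I would pass to power sums: for any bundle $\F$ with Chern roots $b_\alpha$ one has $c_2(\F)=\tfrac12\big(c_1(\F)^2-\sum_\alpha b_\alpha^2\big)$, so it suffices to evaluate $P:=\sum_{i\le j\le k}(a_i+a_j+a_k)^2$ and combine it with the expression for $c_1(\S^3\E)$ from (i).

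To compute $P$ I would expand $(a_i+a_j+a_k)^2$ and collect, over all size-$3$ multisets, the coefficients of the monomials $a_s^2$ and $a_sa_t$ ($s\ne t$). By symmetry the coefficient of $a_s^2$ is a single number $A$ independent of $s$, and the coefficient of $a_sa_t$ a single number $B$ independent of the unordered pair, so $P=A\sum_s a_s^2+2B\sum_{s<t}a_sa_t$. Here $A$ equals $\sum_M n_s(M)^2$, where $M$ runs over size-$3$ multisets from $\{1,\dots,r\}$ and $n_s(M)$ is the multiplicity of $s$ in $M$; using $n^2=\sum_{j\ge1}(2j-1)[n\ge j]$ together with the counts $\#\{M:n_s(M)\ge 1\}=\binom{r+1}{2}$, $\#\{M:n_s(M)\ge 2\}=r$, $\#\{M:n_s(M)\ge 3\}=1$, one gets $A=\binom{r+1}{2}+3r+5=\tfrac12(r+2)(r+5)$. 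The number $B$ can be found either by the same kind of enumeration or, more quickly, from $9\binom{r+2}{3}=\sum_M\big(\sum_s n_s(M)\big)^2=rA+r(r-1)B$, giving $B=r+2$.

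It then remains to substitute $\sum_s a_s^2=c_1(\E)^2-2c_2(\E)$ and $\sum_{s<t}a_sa_t=c_2(\E)$, obtaining $P=\tfrac12(r+2)(r+5)c_1(\E)^2-(r+2)(r+3)c_2(\E)$, and to plug this into $c_2(\S^3\E)=\tfrac12\big(c_1(\S^3\E)^2-P\big)$ using $c_1(\S^3\E)=\tfrac12(r+1)(r+2)c_1(\E)$. Collecting terms, the coefficient of $c_2(\E)$ comes out as $\tfrac12(r+2)(r+3)$ and that of $c_1(\E)^2$ as $\tfrac{r+2}{8}\big((r+1)^2(r+2)-2(r+5)\big)$, which factors as $\tfrac18(r-1)(r+2)(r^2+5r+8)$, yielding the stated formula.

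I expect the only genuine obstacle to be the combinatorial bookkeeping of the multiset counts, together with the short but error-prone polynomial factorization at the end; everything else is formal manipulation via the splitting principle. As a final consistency check one may verify both identities for $r=1,2,3$ against direct descriptions (e.g. $\S^3$ of a line bundle is its third power, and $\rk\S^3\E=\binom{r+2}{3}$): since, by the splitting principle, $c_2(\S^3\E)$ is necessarily of the form $\alpha(r)c_1(\E)^2+\beta(r)c_2(\E)$ with $\alpha,\beta$ polynomial in $r$ of the degrees seen above, a few values of $r$ already pin the coefficients down — though this is only a check, not a substitute for the computation.
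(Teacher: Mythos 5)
Your proposal is correct, and the arithmetic checks out: with $A=\binom{r+1}{2}+3r+5=\tfrac12(r+2)(r+5)$, the identity $9\binom{r+2}{3}=rA+r(r-1)B$ gives $B=r+2$, whence $P=\tfrac12(r+2)(r+5)c_1(\E)^2-(r+2)(r+3)c_2(\E)$, and $(r+1)^2(r+2)-2(r+5)=r^3+4r^2+3r-8=(r-1)(r^2+5r+8)$, so both coefficients agree with the statement. Your route is genuinely different from the paper's: you work globally with the Chern roots of $\S^3\E$ (the sums $a_i+a_j+a_k$ over size-$3$ multisets) and extract $c_2$ from the Newton identity $c_2=\tfrac12(c_1^2-p_2)$, reducing everything to one combinatorial count; the paper instead argues by induction on $r$, splitting off a line bundle, using the decomposition $\S^3(L\oplus\F)\cong L^{\o3}\oplus L^{\o2}\o\F\oplus L\o\S^2\F\oplus\S^3\F$ together with the Whitney formula, the previously established formulas for $c_1,c_2$ of $\S^2$ and of twists by a line bundle, and the inductive hypothesis. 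Your argument is non-inductive and conceptually shorter, at the cost of the multiset bookkeeping and of invoking the symmetric-function identity; the paper's recursion is longer as a computation but stays within the toolkit already set up in the appendix (Lemma \ref{app:chern-S^k}, Corollary \ref{app:symmetric-chern}) and mirrors the proofs of the neighbouring lemmas. One small point of rigor worth making explicit in your write-up: the splitting principle applies here because $c_1(\S^3\E)$ and $c_2(\S^3\E)$ are universal polynomials in the Chern classes of $\E$, so it suffices to verify the identity after pulling back to a flag bundle where $\E$ splits; your final remark correctly treats the small-rank evaluations as a consistency check rather than a proof, since using them as a proof would require an a priori degree bound on $\alpha(r),\beta(r)$.
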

	
	\begin{proof}
		Formula (i) is a special case of Lemma \ref{app:chern-S^k}. For (ii), as it is clearly true for line bundles, we proceed by induction on $r\ge2$ using the splitting principle. Then we suppose $\E=L\oplus\F$ where $L$ is a line bundle and $\F$ is sum of $r-1$ line bundles. In this way, we have $$c_1(\E)=c_1(L)+c_1(\F)\ \text{and}\ c_2(\E)=c_1(L)c_1(\F)+c_2(\F).$$ For (ii), expanding through $c_k(\H\oplus\g)=\sum_{i=0}^{k}c_i(\H)c_{k-i}(\g)$ and using the inductive hypothesis together with (i), Corollary \ref{app:symmetric-chern}(i)-(ii) and \cite[Proposition 5.17]{eisenbud20163264}, we have 
		\begin{align*}
			c_2(\S^3\E)&=c_2(\S^3(L\oplus\F))=c_2\left(L^{\o3}\oplus L^{\o2}\o\F\oplus L\o\S^2\F\oplus\S^3\F\right)\\
			&=c_1(L^{\o3})\left[c_1(L^{\o2}\o\F)+c_1(L\o\S^2\F)+c_1(\S^3\F)\right]+c_2(L^{\o2}\o\F)\\
			&\quad+c_1(L^{\o2})\left[c_1(L\o\S^2\F)+c_1(\S^3\F)\right]+c_2(L\o\S^2\F)+c_1(L\o\S^2\F)c_1(S^3\F)+c_2(\S^3\F)\\
			&=3c_1(L)\left(c_1(\F)+2(r-1)c_1(L)+rc_1(\F)+\binom{r}{2}c_1(L)+\binom{r+1}{2}c_1(\F)\right)\\
			&\quad+c_2(\F)+2(r-2)c_1(\F)c_1(L)+4\binom{r-1}{2}c_1(L)^2\\
			&\quad\quad+\left(c_1(\F)+2(r-1)c_1(L)\right)\left(rc_1(\F)+\binom{r}{2}c_1(L)+\binom{r+1}{2}c_1(\F)\right)\\
			&\quad\quad\quad+\frac{1}{2}(r-2)(r+1)c_1(\F)^2+(r+1)c_2(\F)+r\left(\binom{r}{2}-1\right)c_1(F)c_1(L)+\binom{\frac{1}{2}r(r+1)}{2}c_1(L)^2\\
			&\quad\quad\quad\quad+\binom{r+1}{2}c_1(\F)\left(rc_1(\F)+\binom{r}{2}c_1(L)\right)\\
			&\quad\quad\quad\quad\quad+\frac{1}{8}(r-2)(r+1)\left((r-1)^2+5(r-1)+8\right)c_1(\F)^2+\frac{1}{2}(r+1)(r+2)c_2(\F)\\
			&=\frac{1}{8}(r+2)(r^3+4r^2+3r-8)(c_1(L)^2+c_1(\F)^2)\\
			&\quad+\frac{1}{4}(r+2)(r^3+4r^2+5r-2)c_1(L)c_1(\F)+\frac{1}{2}(r+2)(r+3)c_2(\F)\\
			&=\frac{1}{8}(r-1)(r+2)(r^2+5r+8)(c_1(L)^2+2c_1(L)c_1(\F)+c_1(\F)^2)\\
			&\quad+\frac{1}{2}(r+2)(r+3)(c_1(L)c_1(\F)+c_2(\F))\\
			&=\frac{1}{8}(r-1)(r+2)(r^2+5r+8)c_1(\E)^2+\frac{1}{2}(r+2)(r+3)c_2(\E)
		\end{align*}
		as required.
	\end{proof}

\end{document}